\documentclass[12pt,leqno,oneside]{amsart}
\usepackage{mathrsfs,dsfont}
\usepackage{amsmath,amstext,amsthm,amssymb,amscd,bbm}
\usepackage{charter}
\usepackage{typearea}
\usepackage{pdfsync}
\usepackage{color}
\usepackage{mathtools}
\usepackage{enumitem}
\usepackage{typearea}

\usepackage[backref=page]{hyperref}

\usepackage[width=6.4in,height=8.5in]
{geometry}

\numberwithin{equation}{section}

\newtheorem{Theorem}{Theorem}[section]
\newtheorem{Proposition}[Theorem]{Proposition}

\newtheorem{Corollary}[Theorem]{Corollary}
\theoremstyle{definition}
\newtheorem{Definition}[Theorem]{Definition}

\newtheorem{Remark}[Theorem]{Remark}

\newcommand{\ov}{\overline}

\DeclareMathOperator{\codim}{codim}

\DeclareMathOperator{\Span}{Span}

\DeclareMathOperator{\rank}{rank}

\DeclareMathOperator{\End}{End}
\DeclareMathOperator{\Id}{Id}
\newcommand{\cali}[1]{\mathscr{#1}}
\newcommand{\cO}{\cali{O}}

\newcommand{\cT}{\cali{T}}
\newcommand{\cC}{\cali{C}}

\newcommand{\field}[1]{\mathbb{#1}}
\newcommand{\Z}{\field{Z}}
\newcommand{\R}{\field{R}}
\newcommand{\C}{\field{C}}
\newcommand{\N}{\field{N}}

\renewcommand{\P}{\field{P}}

\newcommand{\E}{\field{E}}
\newcommand{\G}{\field{G}}

\newcommand{\FS}{\mathrm{FS}}

\newcommand{\bs}{{\boldsymbol{s}}}

\newcommand{\comment}[1]{}

\begin{document}

\title{Asymptotic expansion of induced Grassmannian Chern forms and 
distribution of random degeneracy sets}

\author{Turgay Bayraktar} 
\thanks{T. Bayraktar is partially supported by T\"{U}B\.{I}TAK grant ARDEB-1001/124F370}
\address{Faculty of Engineering and Natural Sciences, 
Sabanc{\i} University, \.{I}stanbul, Turkey}
\email{tbayraktar@sabanciuniv.edu}
\urladdr{https://orcid.org/0000-0002-1364-9728}

\author{Dan Coman}
\thanks{D.\ Coman is supported by the NSF Grant DMS-2154273}
\address{Department of Mathematics, Syracuse University, 
Syracuse, NY 13244-1150, USA}
\email{dcoman@syr.edu}
\urladdr{https://orcid.org/0009-0003-9311-9464}

\author{Bingxiao Liu}
\address{School of Sciences, Great Bay University, Dongguan 523000, China} 
\email{liubx@gbu.edu.cn}
\thanks{B.\ Liu is supported by the University Start-up Fund 
(Grant No. YJKY260008) from Great Bay University and partially supported by DFG Priority Program 2265 
`Random Geometric Systems' (Project-ID 422743078) from 
Universit\"at zu K\"oln.}
\urladdr{https://orcid.org/0000-0001-7124-0413}

\author{George Marinescu}
\address{Universit\"at zu K\"oln, Mathematisches institut,
Weyertal 86-90, 50931 K\"oln, Germany 
\newline\mbox{\quad}\,Institute of Mathematics `Simion Stoilow', 
Romanian Academy, Bucharest, Romania}
\email{gmarines@math.uni-koeln.de}
\thanks{G.\ Marinescu is partially supported 
by the DFG funded projects SFB TRR 191 `Symplectic Structures in Geometry, 
Algebra and Dynamics' (Project-ID 281071066\,--\,TRR 191),
DFG Priority Program 2265 `Random Geometric Systems' 
(Project-ID 422743078), the ANR-DFG project `QuasiDy\,--\,Quantization, Singularities, 
and Holomorphic Dynamics' (Project-ID 490843120)}
\urladdr{https://orcid.org/0000-0001-6539-7860}

\subjclass[2020]{Primary 32L10; Secondary 32A60, 
32U40, 53C55, 60D05.} %81Q50.}

\keywords{Bergman kernel, Tian's theorem, Grassmannian embedding,
Chern forms, holomorphic vector bundle, random holomorphic section, degeneracy
locus, meromorphic transform}

\date{June 20, 2026}

\dedicatory{Dedicated to Professor Yum-Tong Siu with admiration and gratitude}

\begin{abstract}
For the Grassmannian embeddings defined by the spaces \(H^0(X,L^p\otimes E)\), where
\(L\) is a positive line bundle and \(E\) is a holomorphic vector bundle over a compact
complex manifold, we prove a complete asymptotic expansion of the induced
Grassmannian Chern forms and compute the first coefficients explicitly.
As an application of the first-order asymptotics and of the theory of meromorphic 
transforms by Dinh and
Sibony, we prove that on a compact K\"ahler manifold, 
the normalized currents of integration over the loci where
several random sections become linearly dependent converge almost surely to the
corresponding power of the curvature form of the positive line bundle, with a
quantitative estimate for the speed of convergence. Moreover, in the determinant case, we additionally present an alternative method based on the Wishart distribution, together with variance estimates.
\end{abstract}

\maketitle

\tableofcontents

\section{Introduction}\label{S:Intro}
The study of zero sets of random holomorphic sections lies at the intersection of
complex geometry, probability, and pluripotential theory. Following the foundational
work of Shiffman and Zelditch \cite{ShZ99}, many authors have investigated the
equidistribution and fluctuation properties of such zeros in geometric settings; see,
for instance, \cite{BCM,Sh08,DMS12,DMM16,DMN17,DLM25}. This line of research is
also closely related to the classical theory of zeros of random polynomials, where
analogous distribution questions have been studied for a long time. We refer the reader
to \cite{BL13,BCHM} and the references therein for further background.
In the geometric setting, one studies holomorphic sections of high tensor
powers of a positive line bundle and asks how their zero currents behave in the
semiclassical limit. For line bundles, the answer is governed by the curvature form of
the underlying Hermitian metric. In \cite{BCLM}, we extended this circle of ideas to
sections of vector bundles of the form $L^p\otimes E$, using Grassmannian embeddings
and a higher rank version of Tian's theorem. 

The present paper has two closely related goals. First, we strengthen the
Grassmannian Tian approximation theorem from \cite{BCLM} 
by proving a complete asymptotic expansion
for the pullback of Chern forms of the dual universal bundle. Second, we use this
deterministic expansion to prove an almost sure equidistribution theorem not only for zero sets
of individual random sections but also for degeneracy loci of random $k$-tuples of sections.

Let $(X,\omega)$ be a (connected) compact Hermitian manifold of dimension $n$, 
where $\omega$ is a Hermitian metric compatible with the complex structure $J$ of $X$. 
Set $g^{TX}(\cdot,\cdot):=\omega(\cdot, J\,\cdot)$ the associated Riemannian metric 
with the Riemannian volume form $dv_X=\frac{\omega^n}{n!}$.

For any given vector bundle $F\to X$, together with a smooth Hermitian metric 
$h^F$ and connection $\nabla^F$, we introduce the following notation: 
a sequence of sections $B_p\in \mathscr{C}^\infty(X,F)$, $p\in\N$ 
admits an asymptotic expansion on $X$ as $p\to \infty$
if there exists an integer $M\in\Z$ and a sequence of smooth sections 
$\mathcal{B}_m\in \mathscr{C}^\infty(X,F)$ ($m\in\N$) 
such that for any $\ell\in \N$ and $N\in\N$, there exists 
a constant $C_{\ell, N}>0$ such that for all $p$
\begin{align}\label{e:expansion}
&\Big \|B_p- \sum_{m=0}^{N}\mathcal{B}_m
p^{M-m} \Big \|_{\cC^\ell(X)} 
\leqslant C_{\ell,N}\, p^{M-N-1},
\end{align}
where the $\cC^\ell(X)$-norm is induced by the Levi-Civita connection 
of $(X,g^{TX})$ and $h^F$, $\nabla^F$. In this case, we simply write
$$B_p\sim \sum_{m=0}^\infty p^{M-m} \mathcal{B}_m.$$

We begin by recalling the geometric setting from \cite{BCLM} on which
the present paper builds. Let $(L,h^L)$ be a positive line bundle on $X$, and let $(E,h^E)$ be a
Hermitian holomorphic vector bundle of rank $r\leq n$ on $X$. For $p\geq1$, we set
\[
L^p:=L^{\otimes p},\qquad V_p:=H^0(X,L^p\otimes E),\qquad d_p:=\dim V_p-1.
\]
We let $h^{L^p}$ and $h^{L^p\otimes E}$ be the Hermitian metrics induced by $h^L$ and
$h^E$ on $L^p$, respectively, on $L^p\otimes E$. We endow $V_p$ with the $L^2$-inner
product induced by this metric data, namely
\begin{equation}\label{e:ip1}
(S,S')_p=\int_X\langle S,S'\rangle_{h^{L^p\otimes E}}\,\frac{\omega^n}{n!}\,,\qquad
S,S'\in V_p .
\end{equation}
We endow its dual space $V_p^\star=H^{0}(X,L^{p}\otimes E)^\star$ with the inner product
induced by the one on $V_p$.

Let $\G(r,V_p^\star)$ be the Grassmannian of complex $r$-dimensional subspaces of
$V_p^\star$. The Kodaira map is defined by
\begin{equation}\label{e:Kod1}
\Phi_p:X\to\G(d_p+1-r,V_p)=\G(r,V_p^\star)\,,\qquad
\Phi_p(x)=\{s\in V_p:\,s(x)=0\}.
\end{equation}
Since $(L,h^L)$ is positive, the above map is a well-defined holomorphic embedding for all
$p$ sufficiently large; see \cite[Theorem 5.1.18]{MM07}.
Moreover, $c_1(L,h^L)$ defines a K\"{a}hler metric on $X$. 
Let $b_0\in\mathscr{C}^\infty(X,\R_{>0})$ and 
$\alpha_L\in\Omega^{(1,1)}(X,\R)$ be defined by
\begin{equation}
    b_0(x):=\frac{c_1(L,h^L)^n}{\omega^n}(x) >0,\qquad \alpha_L:=\frac{\sqrt{-1}}{2\pi}\partial\ov{\partial}\log b_0 \in\Omega^{(1,1)}(X,\R).
    \label{e:bzero}
\end{equation}
Let $r^X_L$ denote the scalar curvature of the K\"{a}hler manifold $(X, c_1(L,h^L))$.

Let $\cT$ be the universal holomorphic vector bundle over $\G(r,V_p^\star)$, and let
$h^\cT$ be the Hermitian metric on it induced by the inner product on $V_p^\star$. Let
$(\cT^\star,h^{\cT^\star})$ be the dual holomorphic vector bundle. 
We denote by $c(\cT^\star,h^{\cT^\star})$ 
and $c_k(\cT^\star,h^{\cT^\star})$, respectively, the total Chern form 
and the $k$-th Chern form of
$(\cT^\star,h^{\cT^\star})$ (see Definition \ref{def:Chern}). 
The next theorem strengthens the Grassmannian version of 
Tian’s approximation theorem established in \cite[Theorem 1.1]{BCLM}. 
In particular, we derive a complete asymptotic expansion for 
the pullback of Chern forms and explicitly compute the first three terms in each degree.
\begin{Theorem}\label{T:Tian}
Let $(X,\omega)$ be a compact Hermitian manifold of dimension $n$. Let
$(L,h^L)$ be a positive line bundle on $X$, and let $(E,h^E)$ be 
a Hermitian holomorphic vector bundle on $X$ of rank $r\leq n$. 

(1) For sufficiently large $p$, we have the formula for the total Chern form
\begin{equation}
    \Phi_p^\star(c(\cT^\star, h^{\cT^\star}))=
    \sum_{k=0}^r \mathfrak{S}_{r-k}\left(\frac{\sqrt{-1}}{2\pi}R^E+
    \frac{\sqrt{-1}}{2\pi}\widehat{A}_p\right)\left(pc_1(L,h^L)+
    \alpha_L+1\right)^k,
    \label{e:Chern}
\end{equation}
where $\mathfrak{S}_{r-k}(\cdot)$ denotes the elementary invariant polynomial 
of degree $r-k$ (see \eqref{e:symmpol}), and we use the notation in 
Proposition \ref{p:ApExp} for $\widehat{A}_p:=
A_p-\mathcal{A}_0\sim \sum_{m=1}^\infty p^{-m}\mathcal{A}_m$.

(2) For each $0\leq k\leq r$ and $\ell=0,1,2,\ldots,$
there exists a form $\boldsymbol{T}_{k,\ell}\in \Omega^{(k,k)}(X,\R)$, 
which can be expressed as a polynomial in $R^{TX}$, $R^L$, $R^E$, $d\omega$, 
and their covariant derivatives, such that, as $p\to\infty$, 
we have the following asymptotic expansion,
\begin{equation}\label{e:Tian}
\Phi_p^\star(c_k(\cT^\star,h^{\cT^\star})) 
\sim \sum_{\ell=0}^\infty p^{k-\ell} \boldsymbol{T}_{k,\ell}.
\end{equation}
Moreover,  we have
\begin{equation}
\begin{split}
\boldsymbol{T}_{k,0}=&\binom rk c_1(L,h^L)^k\,,
\qquad\boldsymbol{T}_{k,1}=\binom{r-1}{k-1}c_1(L,h^L)^{k-1}
\wedge \left(c_1(E,h^E)+r\alpha_L \right),\\
 \boldsymbol{T}_{k,2}=& \binom{r-2}{k-2} c_1(L,h^L)^{k-2}
 \wedge\left(c_2(E,h^E)+(r-1)c_1(E,h^E)\wedge\alpha_L+
 \frac{r(r-1)}{2}\alpha_L^2\right)\\
 &\,+\binom{r-1}{k-1}c_1(L,h^L)^{k-1}\wedge
 \left(r\frac{\sqrt{-1}}{16\pi^2}\partial\ov{\partial} r^X_L + 
 \frac{\sqrt{-1}}{2\pi}\partial\ov{\partial}\left(\Lambda_{\omega_L}c_1(E,h^E)+
 r\Lambda_{\omega_L}\alpha_L\right)\right),
\end{split}
\label{e:Tian1}
\end{equation}
where $\sqrt{-1}\Lambda_{\omega_L}c_1(E,h^E), 
\sqrt{-1}\Lambda_{\omega_L}\alpha_L\in\mathscr{C}^\infty(X,\R)$ 
are defined as in \eqref{e:ContrL} and \eqref{e:DeltaL}, 
and our convention is that $c_1(L,h^L)^0=1$ and for an integer 
$m<0$, $c_1(L,h^L)^m:=0$.
\end{Theorem}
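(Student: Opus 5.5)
The starting point is the standard identification of the pulled-back universal quotient geometry with the Bergman kernel. Under $\Phi_p$, the bundle $\Phi_p^\star\cT^\star$ is canonically isomorphic to $L^p\otimes E$ as a holomorphic bundle: the fiber of $\cT$ at $\Phi_p(x)$ is the annihilator of $\{s:s(x)=0\}$ in $V_p^\star$, which is $(L^p\otimes E)_x^\star$ via evaluation. Under this isomorphism the pulled-back metric $\Phi_p^\star h^{\cT^\star}$ is the Fubini--Study type metric $h_p^{\FS}$. It is given by $h^{L^p\otimes E}$ twisted by the inverse of the Bergman kernel function $P_p(x,x)\in\End(E_x)$, that is, $|u|^2_{\FS}=\langle P_p(x,x)^{-1}u,u\rangle_{h^{L^p\otimes E}}$ (up to transposition conventions). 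I would take this as the first step, citing the construction in \cite{BCLM}.

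It follows that $\Phi_p^\star c(\cT^\star,h^{\cT^\star})=c(L^p\otimes E,h_p^{\FS})$. Its Chern curvature is
\[
R^{L^p\otimes E}_{\FS}=pR^L\otimes\Id_E+R^E+\ov\partial\big(\partial^{h}P_p\,P_p^{-1}\big)
\]
in a suitable holomorphic frame, i.e.\ $pR^L+R^E-\ov\partial\partial\log P_p$ in the matrix sense. By Proposition \ref{p:ApExp}, I expect $P_p(x,x)=p^n b_0(x)\,(\Id+p^{-1}\mathcal{B}_1+\dots)$, or equivalently that the correction term $A_p$ is built from $\log$ of the normalized kernel. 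Its leading part $\mathcal{A}_0$ produces the scalar form $-\ov\partial\partial\log b_0$, which equals $2\pi\sqrt{-1}^{-1}\alpha_L$ times $\Id_E$ in the normalization of \eqref{e:bzero}. Hence
\[
\tfrac{\sqrt{-1}}{2\pi}R_{\FS}=\big(pc_1(L,h^L)+\alpha_L\big)\Id_E+\tfrac{\sqrt{-1}}{2\pi}R^E+\tfrac{\sqrt{-1}}{2\pi}\widehat A_p .
\]
The total Chern form is $\det(1+\frac{\sqrt{-1}}{2\pi}R_{\FS})$. Expanding the determinant of $(1+\lambda)\Id+B$, with the scalar $\lambda=pc_1(L)+\alpha_L$ and $B=\frac{\sqrt{-1}}{2\pi}(R^E+\widehat A_p)$, gives $\sum_k \mathfrak S_{r-k}(B)(1+\lambda)^k$. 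This is exactly \eqref{e:Chern}, which proves part (1).

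For part (2), I extract the $(k,k)$ component of \eqref{e:Chern}. Because $\widehat A_p\sim\sum_{m\ge1}p^{-m}\mathcal A_m$ in every $\cC^\ell$ norm, each term is a polynomial in $p$ with coefficients admitting expansions in $p^{-1}$. This yields \eqref{e:Tian} with top power $p^k$. The locality of the coefficients of $\mathcal A_m$, which are polynomials in curvatures, $d\omega$ and their derivatives by the Dai--Liu--Ma/Ma--Marinescu Bergman kernel expansion, gives the stated structure of $\boldsymbol T_{k,\ell}$.

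The coefficients come from the following contributions.
\begin{itemize}[leftmargin=*]
\item $\boldsymbol T_{k,0}$ comes only from $\binom rk (pc_1(L))^k$.
\item $\boldsymbol T_{k,1}$ comes from two sources: $\mathfrak S_1(\frac{\sqrt{-1}}{2\pi}R^E)=c_1(E)$ times $\binom{r-1}{k-1}$ (the degree-$k$ piece of the $j=r-1$ term), and the $\alpha_L$ linear terms of $\binom rk\lambda^k$. The identity $k\binom rk=r\binom{r-1}{k-1}$ combines these into $\binom{r-1}{k-1}c_1(L)^{k-1}(c_1(E)+r\alpha_L)$.
\item $\boldsymbol T_{k,2}$ has a quadratic part and a correction part. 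The quadratic part comes from $c_2(E)$, the cross terms $c_1(E)\alpha_L$ and $\alpha_L^2$, collected with the same binomial identities. The remaining part comes from the trace of $\frac{\sqrt{-1}}{2\pi}\mathcal A_1$, i.e.\ $\mathfrak S_1$ of the first correction, multiplied by $\binom{r-1}{k-1}c_1(L)^{k-1}$.
\end{itemize}

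The main obstacle is computing $\operatorname{tr}\mathcal A_1$. Since $\mathcal A_1=-\ov\partial\partial$ applied to the first-order term of $\log(P_p/p^nb_0)$, I need the second coefficient $\boldsymbol b_1$ of the Bergman kernel for a general Hermitian $\omega$. Relative to $\omega_L=c_1(L,h^L)$, the known formula is $\frac{r^X_L}{8\pi}\Id+\frac{\sqrt{-1}}{2\pi}\Lambda_{\omega_L}R^E$ plus terms coming from the density $b_0$. I would reduce to the Kähler metric $\omega_L$ by rescaling the volume form: the $L^2$ product with $\omega^n$ equals the one for $\omega_L^n$ with weight $b_0^{-1}$, and that weight can be absorbed into $h^L$-independent twisting of $E$ by the trivial line bundle with metric $b_0^{-1}$. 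That absorption produces the $r\Lambda_{\omega_L}\alpha_L$ term. Taking traces then gives $r\frac{r^X_L}{8\pi}+\sqrt{-1}\Lambda_{\omega_L}c_1(E)+r\sqrt{-1}\Lambda_{\omega_L}\alpha_L$. Applying $\frac{\sqrt{-1}}{2\pi}\partial\ov\partial$ yields the stated formula. The delicate point is tracking the constants and the sign conventions in \eqref{e:ContrL}, \eqref{e:DeltaL}.
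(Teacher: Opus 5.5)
Your proposal is correct and follows the paper's proof essentially step for step. The identification \eqref{e:isostarP} of $\Phi_p^\star h^{\cT^\star}$ with $h^{L^p\otimes E}\circ P_p^{-1}$, the curvature formula \eqref{e:Rstar} with $\frac{\sqrt{-1}}{2\pi}\mathcal A_0=\alpha_L\Id_E$, the expansion \eqref{e:ChernPf} of $\det\big((1+\lambda)\Id_E+B\big)$, and the extraction of $(k,k)$-components as in \eqref{e:ckid}--\eqref{e:proof2} are all the same. Your identity $k\binom rk=r\binom{r-1}{k-1}$ is exactly the bookkeeping needed there.

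The only real difference is how you obtain $\boldsymbol b_1$ for a non-polarized $\omega$. The paper quotes the general formula in Theorem~\ref{bkt2.1}. You instead observe that $(\omega,h^E)$ and $(\omega_L,b_0^{-1}h^E)$ induce the same $L^2$ product on $V_p$. Hence $b_0^{-1}P_p$ is the Bergman density of the polarized problem for $E'=(E,b_0^{-1}h^E)$, with $R^{E'}=R^E+\partial\ov\partial\log b_0\,\Id_E$. Remark~\ref{r:2.2} together with \eqref{e:DeltaL} then gives back $b_0^{-1}\boldsymbol b_1=\frac1{8\pi}\big(r^X_L-2\Delta_L\log b_0\big)\Id_E+\frac1{2\pi}\sqrt{-1}\Lambda_{\omega_L}R^E$. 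So your route is valid and needs only the polarized coefficient.

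Pushed one step further, this reduction also explains the shape of \eqref{e:Tian1}. Since $\Phi_p$ and $h^{\cT^\star}$ are literally unchanged, the whole theorem is the case $\omega=\omega_L$ for $E'$. In that picture $\mathcal A_0$ is absorbed into $R^{E'}$, and $\alpha_L$ enters only through $c_1(E')=c_1(E,h^E)+r\alpha_L$ and $c_2(E')=c_2(E,h^E)+(r-1)c_1(E,h^E)\wedge\alpha_L+\binom r2\alpha_L^2$.

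There are two small slips to fix.
\begin{enumerate}
\item The correction term is $P_p\nabla^{\End(E),(1,0)}Q_p=-(\nabla^{\End(E),(1,0)}P_p)Q_p$, so your first curvature display has the wrong sign. Your ``i.e.'' version $-\ov\partial\partial\log P_p$ is the right one, read only to first order since $P_p$ is not scalar; the exact object is the paper's $A_p$.
\item With the conventions \eqref{e:ContrL}--\eqref{e:DeltaL}, the trace of $b_0^{-1}\boldsymbol b_1$ is the real function $\frac{r\,r^X_L}{8\pi}+\Lambda_{\omega_L}c_1(E,h^E)+r\Lambda_{\omega_L}\alpha_L$, without factors $\sqrt{-1}$. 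Applying $\frac{\sqrt{-1}}{2\pi}\partial\ov\partial$ to this is what gives \eqref{e:Tian1}.
\end{enumerate}
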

%===
In \cite[Theorem 1.1]{BCLM}, we obtained the initial coefficients 
of the asymptotic expansion, namely $\boldsymbol{T}_{k,0}$, 
and when $\omega=c_1(L,h^L)$, also $\boldsymbol{T}_{k,1}$. 
These computations provide the first instances of the more general 
identity \eqref{e:Chern} and the asymptotic expansion \eqref{e:Tian} 
developed here.
For the proof of Theorem \ref{T:distrib}, only the leading term in
Theorem \ref{T:Tian} is needed. The full expansion, however, provides 
a more precise
Tian-type approximation theorem for Grassmannian embeddings and may 
be useful in related problems involving higher-order statistics 
of random degeneracy currents.

Note that Tian's original argument \cite{Ti90} relies on H\"ormander's 
$L^2$ estimates to produce peak sections. 
The $L^2$ estimates used in Tian's construction of peak sections are naturally 
intertwined with singular weights arising from plurisubharmonic potentials and 
closed positive currents, a point of view already central in Siu's work on 
Lelong numbers and positive currents \cite{Siu74} and in Skoda's $L^2$ 
techniques for weighted ideals of holomorphic functions \cite{Skoda72}, 
and later developed systematically by Demailly \cite{Demailly92}
and Nadel \cite{Nadel90}.
%through Demailly's regularization theory 
%\cite{Demailly92} and Nadel's multiplier ideal sheaves 
%and vanishing theorem \cite{Nadel90}.
In the context of singular metrics, Tian's theorem was considered 
in relation to the equidistribution of zeros in
\cite{CM15,DMM16,CMN16,CMM17,CMN18,CMN24}.

We now turn from the deterministic asymptotics of Grassmannian embeddings to their
probabilistic consequences for random sections.
In \cite{BCLM}, the Grassmannian Tian approximation theorem was used to study the
distribution of zero currents of random sections in \(V_p\). 
By Bertini's theorem, for a generic section $s\in V_p$, the
zero set $Z_s=\{s=0\}$ is a complex submanifold of $X$ with codimension $r$. We denote by
$[s=0]$ the current of integration over $Z_s$. We endow the projective space $\P V_p$
with the probability measure
\[
\Upsilon_p=\omega_\FS^{d_p},
\]
where $\omega_\FS$ denotes the Fubini–Study form on $\P V_p$, and we consider the
product probability space
\[
(\mathcal H,\Upsilon)=
\left(
\prod_{p=1}^\infty \P V_p,\,
\prod_{p=1}^\infty \Upsilon_p
\right).
\]
It was shown in \cite[Theorem 1.2]{BCLM} that for $\Upsilon$-almost every sequence
$\{s_p\}_{p\geq1}\in\mathcal H$,
\[
p^{-r}[s_p=0]\longrightarrow c_1(L,h^L)^r
\]
weakly as currents as $p\to\infty$. Moreover, the convergence holds with speed
$O(p^{-1}\log p)$ outside exceptional sets of probability $O(p^{-\gamma})$, for every
$\gamma>1$.

The present paper proves the corresponding result for higher degeneracy loci. For
$1\leq k\leq r$, let
\[
(\P V_p)^k=\P V_p\times\cdots\times\P V_p .
\]
If $\bs_p=(s^p_1,\ldots,s^p_k)\in(\P V_p)^k$, we define the degeneracy set
\begin{equation}\label{e:degset-p}
D_k(\bs_p)=D_k(s^p_1,\ldots,s^p_k)
=
\{x\in X:\,s^p_1(x)\wedge\cdots\wedge s^p_k(x)=0\}.
\end{equation}
Thus $D_k(\bs_p)$ is the set of points $x\in X$ where
$s^p_1(x),\ldots,s^p_k(x)$ are linearly dependent; see \cite[p.\ 411]{GH94}. For
generic $(s^p_1,\ldots,s^p_r)\in(\P V_p)^r$, the degeneracy loci have the expected
dimensions: for $2\leq k\leq r$, the set $D_k(s^p_1,\ldots,s^p_k)$ has codimension
$r+1-k$ and is smooth away from $D_{k-1}(s^p_1,\ldots,s^p_{k-1})$, while
$D_1(s^p_1)=Z_{s^p_1}$ is a complex submanifold of $X$ with codimension $r$; see
\cite[pp.\ 411--412]{GH94}. Hence, the current of integration $[D_k(\bs_p)]$ along
$D_k(\bs_p)$ is a well-defined positive closed current of bidegree $(r+1-k,r+1-k)$ on
$X$.

We endow the multiprojective space $(\P V_p)^k$ with the product 
probability measure
\[
\Upsilon_{p,k}:=\Upsilon_p^k,
\]
induced by the Fubini--Study volume $\Upsilon_p=\omega_\FS^{d_p}$ on each factor. We
then consider the product probability space
\begin{equation}\label{e:prob}
(\mathcal H_k,\Upsilon_k)=
\left(
\prod_{p=1}^\infty(\P V_p)^k,\,
\prod_{p=1}^\infty\Upsilon_{p,k}
\right).
\end{equation}

%June 21st 2026
In \cite[Theorem 1.4 and Corollary 1.5]{BCLM} (see also Theorem \ref{T:expdeg}), letting $\bs_p$ denote the random element in $(\P V_p)^k$ distributed according to the probability measure $\Upsilon_{p,k}$, we established that, for all sufficiently large integers $p$, the expectation current of $[D_k(\bs_p)]$ exists and is given by
\[
\E\big[[D_k(\bs_p)]\big]=\Phi_p^\star\big(c_{r+1-k}(\cT^\star,h^{\cT^\star})\big).
\]
Combining this with Theorem \ref{T:Tian}, we refine \cite[Corollary 1.5]{BCLM} to the following asymptotic expansion:
\begin{equation}
    \frac{1}{p^{r+1-k}}\E\big[[D_k(\bs_p)]\big]
= \binom{r}{k-1} c_1(L,h^L)^{r+1-k} 
+\frac{1}{p}\boldsymbol{T}_{r+1-k,1}
+\frac{1}{p^2}\boldsymbol{T}_{r+1-k,2}
+O(p^{-3}),
\label{e:Easym}
\end{equation}
where $\boldsymbol{T}_{r+1-k,1}$ encodes the information corresponding to $c_1(E,h^E)$, and $\boldsymbol{T}_{r+1-k,2}$ encodes the information corresponding to $c_1(E,h^E)$ and $c_2(E,h^E)$.
%%%%%%%

Parallel to the limiting expectation given in \eqref{e:Easym}, our second main theorem describes the almost sure asymptotic distribution of 
the currents of integration along these random degeneracy sets.

\begin{Theorem}\label{T:distrib}
Let $(X,\omega)$ be a compact K\"ahler manifold of dimension $n$, let $(L,h^L)$ be a
positive line bundle on $X$, and let $(E,h^E)$ be a Hermitian holomorphic vector bundle
of rank $r\leq n$ on $X$. Then there exist $C>0$ and $p_0\in\N$ such that the following
holds.

For any $1\leq k\leq r$ and $\gamma>1$, there exist subsets
$\mathcal{E}_{p,k}=\mathcal{E}_{p,k}(\gamma)\subset(\P V_p)^k$ such that for $p>p_0$,
we have

\emph{(i)} $\Upsilon_{p,k}(\mathcal{E}_{p,k})\leq Cp^{-\gamma}$;

\emph{(ii)} if $\bs_p\in(\P V_p)^k\setminus \mathcal{E}_{p,k}$, then
\begin{equation}\label{e:espeed}
\left|
\left\langle
\frac{1}{p^{r+1-k}}\,[D_k(\bs_p)]
-
\binom{r}{k-1} c_1(L,h^L)^{r+1-k},
\phi
\right\rangle
\right|
\leq
C\gamma\,\frac{\log p}{p}\,\|\phi\|_{\cC^2(X)},
\end{equation}
for any $(n+k-r-1,n+k-r-1)$-form $\phi$ of class $\cC^2$ on $X$. Moreover, the
estimate \eqref{e:espeed} holds for $\Upsilon_k$-almost every sequence
$\{\bs_p\}_{p\geq1}\in\mathcal H_k$, provided that $p$ is large enough. Hence
\[
\frac{1}{p^{r+1-k}}\,[D_k(\bs_p)]
\longrightarrow
\binom{r}{k-1} c_1(L,h^L)^{r+1-k}
\]
in the weak sense of currents as $p\to\infty$, almost surely.
\end{Theorem}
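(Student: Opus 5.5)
We split the current into its expectation and a fluctuation, and estimate the two separately. The expectation is already known: by \cite[Theorem 1.4]{BCLM} (Theorem \ref{T:expdeg}) we have $\E[[D_k(\bs_p)]]=\Phi_p^\star(c_{r+1-k}(\cT^\star,h^{\cT^\star}))$. Theorem \ref{T:Tian}(2), applied with $\ell=0$ only, gives
\[
p^{-(r+1-k)}\Phi_p^\star\big(c_{r+1-k}(\cT^\star,h^{\cT^\star})\big)=\binom{r}{k-1}c_1(L,h^L)^{r+1-k}+O(p^{-1})
\]
in $\cC^0$. Hence the deterministic error is $O(p^{-1})\|\phi\|_{\cC^0}$. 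It then remains to bound
\[
\big|\langle [D_k(\bs_p)]-\E[D_k(\bs_p)],\phi\rangle\big|\le C\gamma\, p^{r-k}\log p\,\|\phi\|_{\cC^2}
\]
outside a set of measure $\leq Cp^{-\gamma}$.

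For the fluctuation we use the Dinh--Sibony theory of meromorphic transforms, as in \cite{BCLM} for $k=1$. Consider the incidence variety
\[
\Gamma_p=\{(x,\bs)\in X\times(\P V_p)^k:\,s_1(x)\wedge\cdots\wedge s_k(x)=0\}.
\]
It defines a meromorphic transform $F_p:(\P V_p)^k\to X$ whose fibres are $D_k(\bs)$ and whose codimension is $r+1-k$. The pullback $F_p^\star\phi$ is the function $\bs\mapsto\langle[D_k(\bs)],\phi\rangle$, and its $\Upsilon_{p,k}$-average is $\langle\E[D_k],\phi\rangle$. We write $\phi$ as a difference of forms bounded by $\|\phi\|_{\cC^2}$ times positive closed forms, or apply $dd^c$ directly, so that $F_p^\star\phi$ is quasi-plurisubharmonic on $(\P V_p)^k$ with a controlled $dd^c$. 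Equivalently, we follow the Dinh--Sibony estimate for $\|F_p^\star\phi-\langle \E,\phi\rangle\|$ in terms of the intermediate degrees of $F_p$.

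The estimate takes the form $\Upsilon_{p,k}\{|F^\star_p\phi-c|\ge t\}\le C e^{-\alpha t/\delta_p}$, where $\delta_p$ is a ratio of intermediate degrees. Here $\delta_p$ should be of order $p^{r-k}$ times the quantity coming from the Fubini--Study potential on $(\P V_p)^k$, which grows like $d_p\sim p^n$. Using the exponential estimate for quasi-psh functions on the multiprojective space $(\P V_p)^k$ (the product version of the Dinh--Sibony constant $\alpha$), the choice $t=C\gamma\log p\,\cdot p^{r-k}$ gives an exceptional set of measure $O(p^{-\gamma})$. The intermediate degrees are computed by cohomology: the mass of $\Gamma_p$ against $\omega^{j}\wedge\omega_{\FS}$-products is computed using $\Phi_p^\star c_{r+1-k}$ and Theorem \ref{T:Tian}, giving degrees of order $p^{r+1-k}$ and $p^{r-k}$. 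The ratio therefore loses one power of $p$, which is exactly the $1/p$ in \eqref{e:espeed}.

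The main obstacle is controlling the intermediate degrees of $F_p$ when $k\ge2$. The incidence variety $\Gamma_p$ is then a degeneracy locus, not a complete intersection of hyperplane-type conditions in the fibres. It is also singular along $D_{k-1}$-type loci and over the multiprojective base. I would first try to realize $\Gamma_p$ as the pullback of a Schubert-type cycle in $\G(r,V_p^\star)\times(\P V_p)^k$, so that its cohomology class is a polynomial in $\Phi_p^\star c_j(\cT^\star)$ and the hyperplane classes of each factor. Its intersection numbers are then bounded using Theorem \ref{T:Tian} (leading terms only) together with $\deg \Phi_p\lesssim p^n$.

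Finally, the almost sure statement follows from Borel--Cantelli. Since $\gamma>1$, we have $\sum_p Cp^{-\gamma}<\infty$, so $\Upsilon_k$-almost every sequence lies outside $\mathcal E_{p,k}$ for all large $p$. Weak convergence then follows from \eqref{e:espeed} together with density of $\cC^2$ forms.
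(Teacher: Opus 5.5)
Your architecture matches the paper's. The deterministic part is exactly Theorem \ref{T:expdeg} plus the leading coefficient $\boldsymbol{T}_{r+1-k,0}=\binom{r}{k-1}c_1(L,h^L)^{r+1-k}$ of Theorem \ref{T:Tian}. The fluctuation is controlled by the Dinh--Sibony lemma for the transform with graph $\Gamma_{p,k}$, using $\delta_1(F_{p,k})\sim p^{r+1-k}$ and $\delta_2(F_{p,k})\sim p^{r-k}$, and Borel--Cantelli finishes. (In their convention the transform goes from $X$ to $(\P V_p)^k$ and has codimension $n+k-r-1$, not $r+1-k$.)

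The genuine difference is how the intermediate degrees are computed. Your Schubert/Porteous idea works. $\Gamma_{p,k}$ is the locus where the map $\bigoplus_{m=1}^k\mathbf p_m^\star\cO(-1)\to\pi_1^\star(L^p\otimes E)$, $s_m\mapsto s_m(x)$, has rank at most $k-1$. Provided it has expected codimension and is generically reduced, its class is $[\Gamma_{p,k}]=\sum_j c_j(L^p\otimes E)\,\mathfrak h_{r+1-k-j}(h_1,\dots,h_k)$, where the $h_m$ are the hyperplane classes and the $\mathfrak h_i$ are complete symmetric polynomials. Pairing with $\omega_{p,k}^{kd_p}$, resp.\ $\omega_{p,k}^{kd_p-1}$ via \eqref{e:PVk2}, isolates $c_{r+1-k}$, resp.\ $c_{r-k}\,(h_1+\dots+h_k)$, and reproduces \eqref{e:deg1c}--\eqref{e:deg2c}.

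The paper does it differently:
\begin{itemize}
\item $\delta_1$ comes from $F_{E,k}^\star(\omega_k^{kN})=\E_k(\mu_k)$ and Poincar\'e duality of a generic $D_k(\bs)$ with $c_{r+1-k}(E)$.
\item $\delta_2$ comes from replacing $\omega_k^{kN-1}$ cohomologically by $\frac1{kc_k}\sum_j[A_j]$, for lines $A_j$ avoiding $I_2(F_{E,k})$, and checking that $F_{E,k}^\star[A_j]=[D_{k+1}(\bs_j,\sigma_j)]$, which lies in $c_{r-k}(E)$.
\end{itemize}
Your route is more systematic: no special lines and no bijectivity argument like \eqref{e:bij}. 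The paper's route needs only the classical duality for degeneracy loci. Either way the degrees are cohomological, so neither Theorem \ref{T:Tian} nor a bound on $\deg\Phi_p$ is needed for them.

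Two points in your sketch must be fixed.

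\emph{First, the way $d_p$ enters is misstated.} The Dinh--Sibony bound is
$\Upsilon_{p,k}(\mathcal E'_{p,k}(\varepsilon))\le\Delta_{p,k}\big(\varepsilon\,\delta_1(F_{p,k})/\delta_2(F_{p,k})-3R_{p,k}\big)$, with $R_{p,k}\le C(1+n\log p)$ and $\Delta_{p,k}(t)\le Cp^{n\zeta}e^{-\alpha t}$, where $\alpha,\zeta$ depend only on $k$.
\begin{itemize}
\item The only scale in the exponent is $\delta_2\sim p^{r-k}$.
\item $d_p\sim p^n$ contributes only a polynomial prefactor and an $O(\log p)$ shift, absorbed by taking $\varepsilon=C'\gamma\log p/p$.
\item If $\delta_p$ really carried an extra factor of order $d_p$, your choice $t=C\gamma p^{r-k}\log p$ would give $e^{-\alpha t/\delta_p}\to1$, i.e.\ no decay.
\end{itemize}
Also, (ii) asserts the bound for all $\phi$ outside one set $\mathcal E_{p,k}$, so a tail bound for each fixed $\phi$ is not enough. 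The Dinh--Sibony lemma gives this uniformity: $|dd^c\phi|\lesssim\|\phi\|_{\cC^2}\,\omega^{n+k-r}$, and every $(F_{p,k})_\star\phi$ is compared with the single potential of $(F_{p,k})_\star(\omega^{n+k-r})$.

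\emph{Second, you assert without proof that $\Gamma_{p,k}$ defines a meromorphic transform.} Your Porteous computation presupposes this too. You need $\Gamma_{p,k}$ irreducible of dimension $kd_p+n+k-r-1$, with both projections surjective. The paper proves this (Theorem \ref{T:MTD}) by induction on $k$:
\begin{itemize}
\item Off $\Gamma_{p,k-1}\times\P V_p$, the set is a projective bundle with fibres of dimension $d_p+k-1-r$ over the connected Zariski-open complement of $\Gamma_{p,k-1}$. This uses that $V_p$ spans every fibre.
\item Surjectivity onto $(\P V_p)^k$ uses that every section of $L^p\otimes E$ has a zero, since $Z_{s_k}\subset D_k(\bs)$.
\item Generic reducedness, which you also need, follows from the same local description.
\end{itemize}
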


For $k=1$, Theorem \ref{T:distrib} recovers the equidistribution theorem 
for zeros of random sections from \cite[Theorem 1.2]{BCLM}.  
The new content is the treatment of all higher degeneracy currents. 
The normalization $p^{-(r+1-k)}$ and the constant $\binom{r}{k-1}$ 
are dictated by the fact that the Poincar\'e dual of a generic degeneracy locus 
of $k$ sections of a rank $r$ vector bundle is the Chern class $c_{r+1-k}$.

Theorem \ref{T:distrib} gives a quantitative almost sure equidistribution statement.
It strengthens the corresponding expectation statement by showing that the limiting
behavior holds for individual random degeneracy currents, outside exceptional sets of
explicitly controlled probability. The parameter $\gamma$ measures the strength of
this probabilistic exclusion. Taking $\gamma$ large makes the exceptional sets
$\mathcal E_{p,k}(\gamma)$ very small, since their mass is bounded by
$Cp^{-\gamma}$, but this comes at the cost of a larger constant in the error estimate.
Conversely, smaller values of $\gamma$ give a sharper quantitative estimate for the
convergence speed while allowing larger exceptional sets. 
%Thus, the theorem exhibits a
%balance between probability and accuracy: one may require the estimate to hold outside
%rarer exceptional events, at the price of weakening the uniform numerical bound on the
%convergence rate. 
Since the exceptional probabilities are summable for every
$\gamma>1$, the Borel–Cantelli lemma yields the almost sure convergence asserted in
the theorem.

The paper is organized as follows. In Section \ref{S:Tian}, we prove the asymptotic
expansion of the induced Grassmannian Chern forms given in Theorem \ref{T:Tian}. 
In Section \ref{S:MTD}, we recall and develop the meromorphic transforms 
\cite{DS06} associated with
degeneracy loci of tuples of holomorphic sections. 
%The proof uses the method of meromorphic transforms developed by Dinh and Sibony
%\cite{DS06}. 
For $k=1$, the relevant transform is the one associated in \cite{BCLM}
with the Kodaira map. For general $k$, we introduce a meromorphic transform associated
with the incidence relation
\(
x\in D_k(s_1,\ldots,s_k).
\)
More precisely, in section \ref{S:MTD}, we study the analytic set
\(\Gamma_{E,k}=\{(x,s)\in X\times(\P V)^k:\,x\in D_k(s)\},\)
where $V=H^0(X,E)$, and show that it defines a meromorphic transform from $X$ to
$(\P V)^k$. We compute its relevant intermediate degrees in terms of Chern classes of
$E$. In section \ref{S:distrib}, we apply the Dinh–Sibony equidistribution theorem to
the sequence of transforms associated with $L^p\otimes E$. 
Together with the leading term of Theorem
\ref{T:Tian}, this proves the almost sure equidistribution theorem for random
degeneracy currents. Finally, in Section \ref{S:Wick}, we give a complementary local
argument in the determinant case $k=r$ based on the Wishart distribution, together with variance estimates.
%Finally, Theorem \ref{T:Tian} identifies the deterministic average current with its semiclassical limit,
%which yields Theorem \ref{T:distrib}.

\medskip
\emph{
We dedicate this work to Professor  Yum-Tong Siu, whose foundational vision of using $L^2$-methods, 
positivity, and holomorphic sections to bridge complex differential geometry and 
algebraic geometry forms part of the conceptual background against which 
Tian's approximation theorem and its higher-rank analogs naturally stand.
His mathematical legacy continues to shape complex analysis, geometry, and algebraic geometry, 
and to inspire future generations of mathematicians. 
}

%%%%%
%\section{Tian's approximation theorem for Grassmannian embeddings}\label{S:Tian}
\section{Grassmannian Tian asymptotics}\label{S:Tian}

In this section we prove Theorem \ref{T:Tian}. We consider the Kodaira map $\Phi_p$
to the Grassmannian \eqref{e:Kod1}
%\[
%\Phi_p:X\longrightarrow G(r,V_p^\ast),
%\qquad V_p=H^0(X,L^p\otimes E),
%\]
and study the Chern forms on \(X\) obtained by pulling back the Chern forms of the dual
tautological bundle $(\cT^\star,h^{\cT^\star})$. The key observation is that, under the
natural identification \(\Phi_p^\ast(\cT^\ast)\simeq L^p\otimes E\), the induced metric is
described by the diagonal Bergman kernel endomorphism \(P_p(x)\). This gives a curvature
formula for \(\Phi_p^\ast(\cT^\ast)\), and the diagonal Bergman kernel expansion then
implies the full asymptotic expansion of the induced Grassmannian Chern forms. The first
coefficients are computed explicitly at the end of the section.

Let $(X,\omega)$ be a (connected) compact Hermitian manifold of dimension $n$, 
where $\omega$ is a Hermitian metric compatible with the complex structure $J$ of $X$. 
Set $g^{TX}(\cdot,\cdot):=\omega(\cdot, J\,\cdot)$ the associated Riemannian metric 
with the Riemannian volume form $dv_X=\frac{\omega^n}{n!}$.

Let $(L,h^L)$
be a positive line bundle on $X$, and let $(E,h^E)$ be a Hermitian holomorphic vector
bundle of rank $r\leq n$ on $X$. For $p\geq1$, we set 
\[L^p:=L^{\otimes p},\;V_p:=H^0(X,L^p\otimes E),\;d_p:=\dim V_p-1.\]
We let $h^{L^p}$ and $h^{L^p\otimes E}$ be the Hermitian metrics induced by 
$h^L$ and $h^E$ on $L^p$, respectively, on $L^p\otimes E$. Let $\nabla^E$ and $\nabla^L$ denote the Chern connections of $(E,h^E)$ and $(L,R^L)$, respectively, with Chern curvatures $R^E$ and $R^L$.

We equip the space $V_p$ with the $L^2$-inner product defined in \eqref{e:ip1}, and we endow its dual space $V_p^\star = H^{0}(X, L^{p}\otimes E)^\star$ with the inner product canonically induced by that on $V_p$. Furthermore, we denote by $\mathcal{L}^2(X, L^{p}\otimes E)$ the $L^2$-space associated with the $L^2$ inner product on $X$ determined by the Riemannian metric $g^{TX}$ and the Hermitian metrics $h^{L^{p}}$ and $h^{E}$.
Denote by $H^{0}(X, L^{p}\otimes E)$ the vector space of holomorphic sections of the bundle $L^{p}\otimes E$ over $X$. This space is a finite-dimensional complex vector subspace of $\mathcal{L}^{2}(X, L^{p}\otimes E)$.
Let 
\begin{equation}
P_p:\mathcal{L}^2(X, L^{p}\otimes E)\to
H^{0}(X, L^{p}\otimes E)
	\label{eq:3.1.4}
\end{equation}
be the orthogonal (Bergman) projection.

The Schwartz kernel $P_p(\cdot,\cdot)$ of $P_p$
with respect to the volume form $dv_X$ is called the Bergman kernel of $H^{0}(X, L^{p}\otimes E)$.
It is a smooth section of $(L^p\otimes E)\boxtimes
(L^p\otimes E)^\star$ over $X\times X$, 
\begin{equation}\label{bk2.2}
P_p(x,x') \in (L^p\otimes E)_x\otimes (L^p\otimes E)_{x'}^\star\,,
\end{equation}
especially, 
\begin{equation}\label{bk2.3}
P_p(x,x) \in \End(L^p\otimes E)_x = \End E_x,
\end{equation}
where we use the canonical identification $\End(L^p)=\C$ 
for any line bundle $L$ on $X$.
Let $\{S^p_j\}_{j=0}^{d_p}$, $d_p := 
\dim H^{0}(X,L^p\otimes E)-1$ 
be any orthonormal basis of
$H^{0}(X,L^p\otimes E)$ with respect to the $L^2$ inner
product.
Then we have
% \begin{equation} \label{bk2.4}
% P_p(x,x')=\sum_{j=0}^{d_p} S^p_j (x) \otimes S^p_j(x')^*
% \in (L^p\otimes E)_x\otimes (L^p\otimes E)_{x'}^*.
% \end{equation}
% and 
\begin{equation} \label{bk2.5}
P_p(x):=P_p(x,x)= \sum_{j=0}^{d_p} S^p_j (x) \otimes S^p_j(x)^*
\in \End(E_x).
\end{equation}

Set $\omega_L:=c_1(L,h^L)$, by the positivity assumption, $\omega_L$ is a K\"{a}hler metric on $X$. Let $g^{TX}_L$ denote the associated Riemannian metric. 
Let $r^X_L$, $\Delta_L$ be the scalar curvature and let denote the (positive) Bochner Laplacian associated to $g^{TX}_L$. Let $\Lambda_{\omega_L}R^E\in\mathscr{C}^\infty(X,\mathrm{End}(E))$ denote the contraction of $R^E$ with respect to $\omega_L$ and $g^{TX}_L$. If $\{w_j\}_{j=1}^n$ is an orthonormal basis of $(T^{(1,0)}X, g^{TX}_L)$, then 
\begin{equation}
\sqrt{-1}\Lambda_{\omega_L}R^E=\sum_j R^E(w_j,\ov{w}_j).
    \label{e:ContrL}
\end{equation}
Note that for any smooth function $f$ on $X$, we have
\begin{equation}
    \Lambda_{\omega_L}(\sqrt{-1}\partial\ov{\partial} f)=-\frac{1}{2}\Delta_L f.
    \label{e:DeltaL}
\end{equation}

We have the following diagonal asymptotic expansion
of the Bergman kernel. 
%===
\begin{Theorem}[{\cite[Theorems 4.1.1 and 4.1.3]{MM07}}]\label{bkt2.1} 
For every $m\in\N$, there exists a smooth section
\(\boldsymbol{b}_m\in \mathscr{C}^{\infty}(X,\End E),\)
such that 
\begin{align}\label{bk2.6}
P_p(x)\sim \sum_{m=0}^{\infty} p^{n-m}\,\boldsymbol{b}_m(x).
\end{align}
The value $\boldsymbol{b}_m(x)$ is a universal polynomial in the
curvature tensors $R^{TX}$ and $R^E$, their covariant derivatives
of order at most $2m-2$, and the tensors $d\omega$ and $R^L$, 
together with their covariant derivatives of orders at most
$2m-1$ and $2m$, respectively. These expressions also involve
reciprocals of linear combinations of the eigenvalues of
$\sqrt{-1}R^L_x$, computed with respect to the metric $g^{TX}$.
Moreover, we have
\begin{align}\label{abk2.5}
&\boldsymbol{b}_0=b_0 \Id_{E},\\
&\boldsymbol{b}_1=\frac{b_0}{8\pi} (r^X_L- 2\Delta_L \log b_0)\Id_{E}+\frac{b_0}{2\pi}\sqrt{-1}\Lambda_{\omega_L}R^E.
\end{align}

%Furthermore, the expansion is uniform in the following manner.
%For any fixed integers $k,\ell\in \N$, assume that the derivatives of 
%$g^{TX}$, $h^L$, and $h^E$ of order $\leqslant 2n+2k+\ell+6$ are 
%continuous over a set bounded in the $\cC^\ell$-\,norm taken 
%with respect to the parameter $x\in X$. 
%Additionally, assume that $g^{TX}$ is continuous over a set bounded below. 
%Then the constant $C_{k,\,\ell}$ is independent of $g^{TX}$ and 
%the $\cC^\ell$-norm in \eqref{bk2.6} encompasses also the 
%derivatives with respect to the parameters. 
\end{Theorem}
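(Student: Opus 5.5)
The expansion \eqref{bk2.6} is quoted from \cite[Theorems 4.1.1 and 4.1.3]{MM07}, and I would reproduce the Dai--Liu--Ma strategy there: analytic localization, rescaling, and a perturbative expansion around a model harmonic oscillator.

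\textbf{Step 1: spectral gap and localization.} Let $D_p=\sqrt2(\ov\partial+\ov\partial^*)$ be the Dolbeault--Dirac operator on $\Omega^{0,\bullet}(X,L^p\otimes E)$. For $p$ large, $H^0(X,L^p\otimes E)=\ker D_p$. The Bochner--Kodaira--Nakano formula (with torsion terms from $d\omega$) and the positivity of $R^L$ give a spectral gap: $\mathrm{Spec}(D_p^2)\subset\{0\}\cup[2\pi\mu_0 p-C_L,\infty)$.

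Choose $f\in\cC^\infty_c(\R)$ even, with $f=1$ near $0$ and support so small that finite propagation speed localizes $f(D_p)$. Using the gap, $P_p-f(D_p)$ is $O(p^{-\infty})$ in every $\cC^\ell$ norm. Hence $P_p(x,\cdot)$ depends, up to $O(p^{-\infty})$, only on the geometry in a ball $B(x,\varepsilon)$.

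\textbf{Step 2: rescaling and the model operator.} In normal coordinates $Z$ at $x_0$, trivialize $L$ and $E$ by parallel transport along rays. Extend all the data to $\R^{2n}$ so that the extension agrees with the original near $0$ and is flat at infinity. Then conjugate by the dilation $Z\mapsto Z/\sqrt p$, setting $t=1/\sqrt p$, to obtain a family $\mathscr L_t=t^2 S_t^{-1}D_p^2S_t$.

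Taylor expanding the coefficients in $t$ gives
\[
\mathscr L_t=\mathscr L_0+\sum_{r\geq1}t^r\mathcal O_r .
\]
Here $\mathscr L_0$ is the Bargmann--Fock (harmonic oscillator) operator built from the eigenvalues $a_j$ of $\sqrt{-1}R^L_{x_0}$ with respect to $g^{TX}$. Each $\mathcal O_r$ is a differential operator with polynomial coefficients, whose coefficients are polynomials in $R^{TX},R^E,R^L,d\omega$ and their derivatives at $x_0$. The operator $\mathscr L_0$ is explicitly diagonalizable: its kernel is spanned by $e^{-\sum a_j|z_j|^2/4}$ times holomorphic polynomials, and its Bergman projection $\mathscr P$ has an explicit Gaussian kernel.

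\textbf{Step 3: resolvent expansion and uniform estimates.} This is the main obstacle. I would prove Sobolev estimates for $(\lambda-\mathscr L_t)^{-1}$ in weighted norms that are uniform in $t\in[0,1]$ and $\lambda$ on a contour $\delta$ around $0$ separating $0$ from the rest of the spectrum (uniform gap). Iterating commutators $[\partial_{t},\mathscr L_t]$ and $[Z^\alpha\partial^\beta,\mathscr L_t]$ then yields smoothness in $t$ of
\[
\mathscr P_t=\frac1{2\pi i}\int_\delta(\lambda-\mathscr L_t)^{-1}d\lambda,
\]
together with its $\cC^\ell$ dependence on $x_0$. Sobolev embedding then gives the Taylor expansion $\mathscr P_t(0,0)=\sum_r t^r\mathscr P^{(r)}(0,0)+O(t^{N})$. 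Odd terms vanish by parity, since $\mathcal O_r$ changes parity like $Z^r$. This gives $P_p(x_0)=p^n\sum_m p^{-m}\boldsymbol b_m(x_0)$ with uniform remainders, and hence \eqref{bk2.6}.

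The coefficient $\mathscr P^{(2m)}$ is a finite sum of terms of the form $\mathscr P\mathcal O_{r_1}\mathscr L_0^{-1}\mathscr P^\perp\cdots\mathcal O_{r_j}\mathscr P$. This shows the claimed universality, the order-of-derivative counts, and the appearance of reciprocals of linear combinations of the $a_j$, which come from $\mathscr L_0^{-1}$ on its eigenspaces.

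\textbf{Step 4: explicit coefficients.} From $\mathscr P(0,0)=\prod_j a_j/(2\pi)$ we get $\boldsymbol b_0=\det(\dot R^L/2\pi)\Id_E=b_0\Id_E$. For $\boldsymbol b_1$ I would compute the terms $\mathscr P\mathcal O_2\mathscr P-\mathscr P\mathcal O_1\mathscr L_0^{-1}\mathscr P^\perp\mathcal O_1\mathscr P$ at $(0,0)$. This is simplest after first treating the case $\omega=\omega_L$, which yields $\frac{1}{8\pi}r^X_L+\frac1{2\pi}\sqrt{-1}\Lambda_{\omega_L}R^E$. For general $\omega$, the kernel with respect to $dv_X$ differs from the one with respect to $\omega_L^n/n!$ by the factor $b_0$. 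Writing $h^{E}$ twisted by $b_0$ accounts for the $\Delta_L\log b_0$ term, giving the stated formula.
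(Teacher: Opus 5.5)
Your outline is the Dai--Liu--Ma argument behind \cite[Theorems 4.1.1 and 4.1.3]{MM07}. That is exactly what the paper invokes; it quotes the result and gives no proof of its own. Your Step 4 reduction is correct. Since $\omega^n=b_0^{-1}\omega_L^n$, the product \eqref{e:ip1} coincides with the one built from $\omega_L$ and $h^{E'}=b_0^{-1}h^E$. Hence $P_p(x)=b_0(x)P'_p(x)$, where $P'_p$ is the polarized kernel. Then $R^{E'}=R^E+\partial\ov{\partial}\log b_0\,\Id_E$, combined with \eqref{e:DeltaL}, produces precisely the $-2\Delta_L\log b_0$ correction in $\boldsymbol{b}_1$.

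Two details in Steps 1--2 fail as written, although both are repaired exactly as in \cite{MM07}.

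First, finite propagation speed does not localize $f(D_p)$ when $f$ itself is compactly supported. Once the nonzero spectrum of $|D_p|$ lies outside the support of $f$, one has $f(D_p)=P_p$ exactly, so nothing is gained. The localizing operator is $F(D_p)$ with $F(a)=\big(\int_{\R}f\big)^{-1}\int_{\R}e^{iva}f(v)\,dv$. It is the Fourier transform of $F$ (essentially $f$) that has small support, and this gives $F(D_p)(x,x')=0$ for $d(x,x')\geq\varepsilon$. Meanwhile $F(0)=1$, and the rapid decay of $F$ together with the spectral gap gives $P_p-F(D_p)=O(p^{-\infty})$.

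Second, the extension of $L$ to $\R^{2n}$ must not be flat at infinity. One interpolates its connection to $d+\frac12R^L_{x_0}(Z,\cdot)$, so that $\sqrt{-1}R^{L_0}$ stays uniformly positive on $\R^{2n}$. With a flat end, the essential spectrum of the extended operator reaches down to $0$. Then no contour $\delta$ isolates $0$ uniformly in $t$, and the resolvent estimates of your Step 3 cannot hold. Flattening $g^{TX}$ and $E$ at infinity is harmless.
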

\begin{Remark}\label{r:2.2}
When the bundle $(L,h^L)$ polarizes $(X,\omega)$,
that is $\omega=\omega_L=c_1(L,h)$, we have $b_0\equiv 1$. Hence
    \begin{align}\label{abk2.5-2}
&\boldsymbol{b}_0=\Id_{E},\\
&\boldsymbol{b}_1=\frac{ r^X_L}{8\pi}\Id_{E}+\frac{1}{2\pi}\sqrt{-1}\Lambda_{\omega_L}R^E.
\end{align}
Moreover, in this case, an explicit formula of $\boldsymbol{b}_2$ was given in \cite[eq. (0.8)]{MMCrelle} (see also \cite[Theorem 2.16]{MM11}).
\end{Remark}

% Let $\G(r,V_p^\star)$ be the Grassmannian of $r$-dimensional subspaces of $V_p^\star$. The Kodaira 
% map is defined by
% \begin{equation}\label{e:Kod1}
% \Phi_{p}:X\to\G(d_p+1-r,V_p)=\G(r,V_p^\star)\,,\,\;\Phi_{p}(x)=\{s\in V_p:\,s(x)=0\}.
% \end{equation}
% Since $L$ is positive, the above map is a well-defined 
% holomorphic embedding for all $p$ sufficiently large 
% (see \cite[Theorem 5.1.18]{MM07}).

Let $\cT$ be the universal holomorphic vector bundle over $\G(r,V_p^\star)$ 
and $h^\cT$ be the Hermitian metric on it induced by the inner product on $V_p^\star$. 
Let $(\cT^\star,h^{\cT^\star})$ be the dual holomorphic vector bundle.

If $s\in V_p$, the evaluation map $\varepsilon_s:V_p^\star\to\C$, 
$\varepsilon_s(f)=f(s)$ defines a holomorphic 
section $\sigma_{s}\in H^0(\G(r,V_p^\star),\cT^\star)\equiv V_p\,$, 
where $\sigma_s(W)=\varepsilon_s|_W$ 
and $W\in\G(r,V_p^\star)$.
For $p$ large enough, let $\Phi_{p}:X\to\G(d_p+1-r,V_p)=\G(r,V_p^\star)$ be the Grassmannian embedding defined in \eqref{e:Kod1}. Then we have the following isomorphism of 
holomorphic vector bundles on $X$ (see \cite[Theorem 5.1.16]{MM07}),
\begin{equation}
	\begin{split}
&\Psi_{p}:\Phi_{p}^\star(\cT^\star)\rightarrow \;L^{p}\otimes E,\\
&\Psi_{p}((\Phi_{p}^\star\sigma_{s})(x))=s(x),\;\text{ for any $s\in V_p$, $x\in X$.}
	\end{split}
	\label{e:isostar}
\end{equation}
Moreover, under this isomorphism, we have
\begin{equation}
	h^{\Phi_{p}^\star(\cT^\star)}(x)=h^{L^{p}\otimes E}(x)\circ 
	P_{p}(x)^{-1},
	\label{e:isostarP}
\end{equation}
where $h^{\Phi_{p}^\star(\cT^\star)}$ is the Hermitian metric on 
$\Phi_{p}^\star(\cT^\star)$ induced by $h^{\cT^\star}$.

Let \(\nabla^{E(1,0)}\) and \(\nabla^{E(0,1)}\) denote, respectively, the \((1,0)\)- and \((0,1)\)-components of the Chern connection \(\nabla^{E}\).
Let $\nabla^{\End(E)}=[\nabla^E,\cdot]$ be the connection on $\End(E)$ induced by $\nabla^E$, and let $\nabla^{\End(E),(1,0)}$ and $\nabla^{\End(E),(0,1)}$ denote the $(1,0)$- and $(0,1)$-components of $\nabla^{\End(E)}$. For all sufficiently large values of \(p\), the smooth section \(P_p(x) \in \mathscr{C}^\infty\bigl(X,\mathrm{End}(E)\bigr)\) is pointwise invertible on \(X\). We therefore define its pointwise inverse for every $x\in X$ by
\[
Q_p(x) := P_p(x)^{-1} \in \mathrm{End}(E).
\]
\begin{Proposition}
    Assume that $p$ is sufficiently large. Let $h^{\Phi_{p}^\star(\cT^\star)}$ denote the Hermitian metric on the bundle $L^p \otimes E \to X$ induced by the identification $\Psi_{p}$ in \eqref{e:isostar}. Then the Chern connection 
    of the Hermitian holomorphic vector bundle $(L^p \otimes E, h^{\Phi_{p}^\star(\cT^\star)})$ is given by the following expression:
    \begin{equation}\label{e:C}
    \Phi_{p}^\star\left(\nabla^{\cT^\star}\right)=\nabla^{L^p\otimes E}+P_p(x)\nabla^{\End(E),(1,0)}Q_p(x).
     \end{equation}
 Consequently, its curvature is given by the expression      \begin{equation}\label{e:Rstar}
       \Phi_{p}^\star\left(R^{\cT^\star}\right)=R^{L^p\otimes E}+\nabla^{\End(E),(0,1)}\left(P_p(x)\nabla^{\End(E),(1,0)}Q_p(x)\right)\in\Omega^{(1,1)}(X,\mathrm{End}(E)).
   \end{equation}
\end{Proposition}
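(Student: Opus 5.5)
Since the Chern connection of a Hermitian holomorphic bundle is the unique connection compatible with both the metric and the holomorphic structure, I will verify these two properties for the right-hand side of \eqref{e:C}. Set $h:=h^{L^p\otimes E}$ and $h':=h^{\Phi_p^\star(\cT^\star)}$. By \eqref{e:isostarP}, $h'(u,v)=h(Q_pu,v)$, where $Q_p=P_p^{-1}$; here I use that $P_p(x)$, and hence $Q_p(x)$, is $h^E$-self-adjoint and positive by \eqref{bk2.5}. The holomorphic structure of $\Phi_p^\star(\cT^\star)$ is carried by $\Psi_p$ to that of $L^p\otimes E$, because $\Psi_p$ is an isomorphism of holomorphic bundles. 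Moreover, the pullback of the Chern connection of $(\cT^\star,h^{\cT^\star})$ by the holomorphic map $\Phi_p$ is the Chern connection of the pullback bundle with the pullback metric. It therefore suffices to compute the Chern connection $\nabla'$ of $(L^p\otimes E,h')$.

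The standard change-of-metric formula is the key step. Write $h'=h(H\cdot,\cdot)$ with $H=Q_p$ positive self-adjoint. I claim
\[
\nabla'=\nabla+H^{-1}\nabla^{\End(E),(1,0)}H,\qquad \nabla:=\nabla^{L^p\otimes E}.
\]
The $(0,1)$-part of the right side equals $\nabla^{(0,1)}=\ov\partial$, so the connection is compatible with the holomorphic structure. For metric compatibility, let $u,v$ be local sections. Then
\[
\begin{split}
\partial\, h(Hu,v)&=h\big(\nabla^{(1,0)}(Hu),v\big)+h\big(Hu,\nabla^{(0,1)}v\big)\\
&=h\big((\nabla^{\End(E),(1,0)}H)u+H\nabla^{(1,0)}u,v\big)+h\big(Hu,\nabla^{(0,1)}v\big),
\end{split}
\]
and this equals $h'(\nabla'^{(1,0)}u,v)+h'(u,\nabla'^{(0,1)}v)$. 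The $\ov\partial$-part is handled by conjugation, using that $H$ is self-adjoint. The $\End(L^p)$ factor contributes nothing because it is scalar, so $\nabla^{\End(L^p\otimes E)}$ acting on $Q_p$ reduces to $\nabla^{\End(E)}$. Substituting $H^{-1}=P_p$ gives exactly \eqref{e:C}.

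For the curvature, set $\theta:=P_p\nabla^{\End(E),(1,0)}Q_p\in\Omega^{(1,0)}(X,\End E)$. Then
\[
R'=(\nabla+\theta)^2=R^{L^p\otimes E}+\nabla^{\End(E)}\theta+\theta\wedge\theta .
\]
I will split by bidegree. Since $R'$ is the curvature of a Chern connection, it is of type $(1,1)$, and so is $R^{L^p\otimes E}$. Hence the $(2,0)$-part, namely $\nabla^{\End(E),(1,0)}\theta+\theta\wedge\theta$, must vanish. One can also check this directly, using $\theta=H^{-1}\partial^\nabla H$ and $\partial^\nabla\partial^\nabla=(R^{E})^{(2,0)}=0$. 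What remains is the $(1,1)$-part $\nabla^{\End(E),(0,1)}\theta$, which yields \eqref{e:Rstar}.

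The main obstacle is not conceptual but lies in the bookkeeping. One must confirm the convention in \eqref{e:isostarP}, namely whether $h'=h(P_p^{-1}\cdot,\cdot)$ or its transpose, since this determines whether the correction term is $P_p\nabla Q_p$ rather than $Q_p\nabla P_p$. One must also make sure the naturality of Chern connections under holomorphic pullback is applied through $\Psi_p$. I would settle the convention by checking \eqref{e:isostarP} on the orthonormal frame $\{S_j^p\}$: the pullback metric makes $\Phi_p^\star\sigma_s$ have squared norm equal to the $V_p$-norm of the orthogonal projection of $s$ onto $\Phi_p(x)^\perp$. This identifies $h'$ with the inverse of $P_p$ relative to $h$.
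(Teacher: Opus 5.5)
Your proposal is correct and follows essentially the same route as the paper. Both arguments write $h'=h(Q_p\cdot,\cdot)$ and check that $\nabla^{L^p\otimes E}+P_p\nabla^{\End(E),(1,0)}Q_p$ is $h'$-compatible, using self-adjointness of $P_p,Q_p$ to move the $(0,1)$-part of $[\nabla,Q_p]$ to the other slot. Both then note that its $(0,1)$-part is $\ov{\partial}$ and invoke uniqueness of the Chern connection. Your bidegree argument for the curvature, showing that the $(2,0)$-part $\nabla^{\End(E),(1,0)}\theta+\theta\wedge\theta$ vanishes, supplies the step the paper compresses into ``\eqref{e:Rstar} follows from \eqref{e:C}''.
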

\begin{proof}
    By \eqref{e:isostarP} and using the Chern connection $\nabla^{L^p\otimes E}$ on $(L^p\otimes E,h^{L^p\otimes E})$, for any two local sections $s_1, s_2$ of $L^p\otimes E$, we have
\begin{equation}\label{e:C1}
 \begin{split}
dh^{\Phi_{p}^\star(\cT^\star)}(s_1,s_2)
= &\, h^{\Phi_{p}^\star(\cT^\star)}(P_p(x)[\nabla^{L^p\otimes E},Q_p(x)]s_1, s_2)
\\
&+h^{\Phi_{p}^\star(\cT^\star)}(\nabla^{L^p\otimes E}s_1, s_2)+h^{\Phi_{p}^\star(\cT^\star)}(s_1, \nabla^{L^p\otimes E}s_2).
 \end{split}
 \end{equation}

 Note that $P_p(x)$ and $Q_p(x)$ are Hermitian endomorphisms of $(E_x, h^{L^p\otimes E}_x)$, consequently
 \begin{equation}\label{e:C2}
 h^{\Phi_{p}^\star(\cT^\star)}(P_p(x)[\nabla^{L^p\otimes E},Q_p(x)]s_1, s_2)=h^{\Phi_{p}^\star(\cT^\star)}(s_1, P_p(x)[\nabla^{L^p\otimes E},Q_p(x)]s_2).
 \end{equation}

Combining \eqref{e:C1} and \eqref{e:C2}, we conclude that $\nabla^{L^p\otimes E}+P_p(x)[\nabla^{E(1,0)},Q_p(x)]$ is a Hermitian metric of $(L^p\otimes E, h^{\Phi_{p}^\star(\cT^\star)})$, it is exactly the connection in \eqref{e:C}. Moreover, its $(0,1)$-component is the Dolbeault operator $\ov{\partial}^{L^p\otimes E}$; hence, the connection in \eqref{e:C} defines the unique Chern connection associated with $h^{\Phi_{p}^\star(\cT^\star)}$. The formula \eqref{e:Rstar} follows from \eqref{e:C}.
\end{proof}

\begin{Proposition}\label{p:ApExp}
Define $A_p\in\Omega^{(1,1)}(X,\mathrm{End}(E))$ by
    \begin{equation}\label{e:Ap}
    \begin{split}
          A_p:=&\nabla^{\End(E),(0,1)}\left(P_p(x)\nabla^{\End(E),(1,0)}Q_p(x)\right)\\
          =&\left[\nabla^{E(0,1)},P_p(x)[\nabla^{E(1,0)},Q_p(x)]\right]\in\Omega^{(1,1)}(X,\mathrm{End}(E)).
    \end{split}
    \end{equation}
 Then, as $p\to\infty$, $A_p$ admits an asymptotic expansion 
    \begin{equation}\label{e:ApExp}
   A_p\sim\sum_{m=0}^\infty p^{-m} \mathcal{A}_m,
    \end{equation}
    where $ \mathcal{A}_m\in\Omega^{(1,1)}(X,\mathrm{End}(E))$ and
    \begin{equation}
\begin{split}
    \mathcal{A}_{0}=& \partial\ov{\partial}\log b_0\, \Id_E,\\
 \mathcal{A}_{1}=& -\nabla^{\End(E),(0,1)}(\nabla^{\End(E),(1,0)} (b_0^{-1}\boldsymbol{b}_1))\\
 =&\frac{1}{8\pi}
\partial\bar\partial
\left(r_L^X-2\Delta_L\log b_0\right)\Id_E
-\frac{\sqrt{-1}}{2\pi}\nabla^{\End(E),(0,1)}\nabla^{\End(E),(1,0)}
\left(\Lambda_{\omega_L}R^E\right),\\
\mathcal{A}_2=&-\frac{1}{b_0}\mathcal{A}_1 \boldsymbol{b}_1-\nabla^{\End(E),(1,0)}(b_0^{-1}\boldsymbol{b}_1)\wedge\nabla^{\End(E),(0,1)}(b_0^{-1}\boldsymbol{b}_1)\\
&\quad - \nabla^{\End(E),(0,1)}(\nabla^{\End(E),(1,0)} (b_0^{-1}\boldsymbol{b}_2)),
\end{split}
\label{e:ApTian1}
\end{equation}
where the function $b_0$ is defined in \eqref{e:bzero} and the sections $\boldsymbol{b}_1$, $\boldsymbol{b}_2$ are given in the expansion \eqref{bk2.6} of $P_p(x)$.
\end{Proposition}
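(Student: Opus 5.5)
Our plan is to derive the expansion of $A_p$ directly from the Bergman kernel expansion of Theorem \ref{bkt2.1}, by first expanding $Q_p=P_p^{-1}$ and then substituting into the definition \eqref{e:Ap}. Put $P_p = p^n b_0\,(\Id_E + R_p)$ with
\[
R_p := b_0^{-1}\sum_{m\geq1} p^{-m}\boldsymbol{b}_m \sim p^{-1}b_0^{-1}\boldsymbol{b}_1+p^{-2}b_0^{-1}\boldsymbol{b}_2+\cdots,
\]
an expansion in the sense of \eqref{e:expansion} in every $\cC^\ell$-norm. For $p$ large, $\|R_p\|_{\cC^0}<1/2$, so the Neumann series gives $Q_p = p^{-n}b_0^{-1}(\Id_E+R_p)^{-1}$. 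Since products and inverses of $\cC^\ell$-expansions are again $\cC^\ell$-expansions with universal coefficients, $Q_p\sim p^{-n}b_0^{-1}\sum_m p^{-m}\boldsymbol{q}_m$ with $\boldsymbol{q}_0=\Id_E$, $\boldsymbol{q}_1=-b_0^{-1}\boldsymbol{b}_1$, $\boldsymbol{q}_2=b_0^{-2}\boldsymbol{b}_1^2-b_0^{-1}\boldsymbol{b}_2$.

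Next, because $b_0$ is a scalar function, $\nabla^{\End(E),(1,0)}(b_0^{-1}M)=\partial(b_0^{-1})M+b_0^{-1}\nabla^{\End(E),(1,0)}M$. Hence
\[
P_p\nabla^{\End(E),(1,0)}Q_p = -\partial\log b_0\,\Id_E + (\Id_E+R_p)\,\nabla^{\End(E),(1,0)}(\Id_E+R_p)^{-1}.
\]
Using $\nabla(\Id+R)^{-1}=-(\Id+R)^{-1}(\nabla R)(\Id+R)^{-1}$, the second term equals $-(\nabla^{(1,0)}R_p)(\Id_E+R_p)^{-1}$. Differentiating termwise preserves asymptotic expansions, since the $\cC^{\ell+2}$ bound on the remainder controls the $\cC^\ell$ bound of its derivatives. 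It therefore remains to apply $\nabla^{\End(E),(0,1)}$ and collect powers of $p$.

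At order $p^0$ the only contribution is $-\ov\partial\partial\log b_0=\partial\ov\partial\log b_0$, which gives $\mathcal{A}_0$. At order $p^{-1}$ we get $-\nabla^{(0,1)}\nabla^{(1,0)}(b_0^{-1}\boldsymbol{b}_1)$. Inserting \eqref{abk2.5}, the scalar part contributes $-\ov\partial\partial\frac{1}{8\pi}(r^X_L-2\Delta_L\log b_0)$, and the endomorphism part $\frac{\sqrt{-1}}{2\pi}\Lambda_{\omega_L}R^E$ gives the stated second term. At order $p^{-2}$ we expand
\[
-(\nabla^{(1,0)}R_p)(\Id-R_p+\cdots)
\]
to second order, which produces $-\nabla^{(1,0)}(b_0^{-1}\boldsymbol{b}_2)+\nabla^{(1,0)}(b_0^{-1}\boldsymbol{b}_1)\,b_0^{-1}\boldsymbol{b}_1$. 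Applying $\nabla^{(0,1)}$ with the graded Leibniz rule gives the three terms of $\mathcal{A}_2$: the product term $\nabla^{(0,1)}\nabla^{(1,0)}(b_0^{-1}\boldsymbol{b}_1)\cdot b_0^{-1}\boldsymbol{b}_1=-\frac1{b_0}\mathcal{A}_1\boldsymbol{b}_1$, the wedge of first derivatives, and the $\boldsymbol{b}_2$ term.

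The main obstacle is bookkeeping rather than analysis. One must keep the order of the noncommuting endomorphism factors and the signs from the graded Leibniz rule on $\End(E)$-valued forms consistent. In particular, the sign of the middle term in $\mathcal{A}_2$ depends on the convention for $\wedge$ of $\End(E)$-valued $(1,0)$ and $(0,1)$ forms, $\ov\partial\partial=-\partial\ov\partial$. We would double-check these signs against the scalar case $E$ trivial, $b_0\equiv1$, where $A_p=\partial\ov\partial\log(\text{Bergman density})$ is classical.
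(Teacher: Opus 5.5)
Your proposal is correct and follows essentially the paper's route: you invert the Bergman kernel expansion to get $Q_p$, substitute into $A_p$, and collect powers of $p$, and your $\boldsymbol{q}_m$ agree with the paper's up to the factored-out $b_0^{-1}$. Your factorization $P_p=p^nb_0(\Id_E+R_p)$, which gives $P_p\nabla^{(1,0)}Q_p=-\partial\log b_0\,\Id_E-(\nabla^{(1,0)}R_p)(\Id_E+R_p)^{-1}$, is only a tidier bookkeeping of the same computation; with the standard graded Leibniz rule it reproduces the paper's $\mathcal{A}_0,\mathcal{A}_1,\mathcal{A}_2$, including the sign of the wedge term.
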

\begin{proof}
We assume $p$ to be sufficiently large, then by \eqref{bk2.6} and the fact $b_0 >0$, we have the asymptotic expansion of $Q_p\in \mathscr{C}^\infty(X,\mathrm{End}(E))$, write
\begin{align}\label{e:QpExp}
Q_p(x)\sim \sum_{m=0}^{\infty} p^{-n-m}  \boldsymbol{q}_m(x),
\end{align}
where
    \begin{equation}
\begin{split}
   &  \boldsymbol{q}_{0}= \frac{1}{b_0} \Id_E
;\\
&  \boldsymbol{q}_{1}= -\frac{1}{b_0^2}\boldsymbol{b}_1\\
& \boldsymbol{q}_2=\frac{1}{b_0^3}\left(\boldsymbol{b}_1^2-b_0\boldsymbol{b}_2\right)
\end{split}
\label{e:QpT}
\end{equation}

Using the expansions \eqref{bk2.6} with \eqref{e:QpExp} in the definition \eqref{e:Ap} of $A_p$, we obtain the existence of the asymptotic expansion \eqref{e:ApExp}. Now we focus on the calculations of $\mathcal{A}_{m}$ for $m=0,1,2$. In fact, we have by elementary computations
 \begin{equation}
\begin{split}
 P_p(x)[\nabla^{E(1,0)},Q_p(x)]   =&  b_0\partial b_0^{-1}\, \Id_E + p^{-1}\left(b_0[\nabla^{E(1,0)},\boldsymbol{q}_1]+\partial b_0^{-1} \boldsymbol{b}_1\right)\\
& +p^{-2}\left(b_0[\nabla^{E(1,0)},\boldsymbol{q}_2]+\partial b_0^{-1} \boldsymbol{b}_2+\boldsymbol{b}_1[\nabla^{E(1,0)},\boldsymbol{q}_1]\right)+O(p^{-3}).
\end{split}
\label{e:Ap1}
\end{equation}
As a consequence, we obtain
$\mathcal{A}_{0}=[\nabla^{E(0,1)}, b_0\partial b_0^{-1}\Id_E]=\partial\ov{\partial}\log b_0\, \Id_E$ and
    \begin{equation}
\begin{split}
&  \mathcal{A}_{1}= \left[\nabla^{E(0,1)}, b_0[\nabla^{E(1,0)},\boldsymbol{q}_1]+\partial b_0^{-1} \boldsymbol{b}_1\right],\\
& \mathcal{A}_2=\left[\nabla^{E(0,1)},b_0[\nabla^{E(1,0)},\boldsymbol{q}_2]+\partial b_0^{-1} \boldsymbol{b}_2+\boldsymbol{b}_1[\nabla^{E(1,0)},\boldsymbol{q}_1]\right].
\end{split}
\label{e:Ap2}
\end{equation}
At last, combining \eqref{e:QpT} and \eqref{e:Ap2}, we obtain the formulas in \eqref{e:ApTian1}.
\end{proof}

\begin{Remark}
In the course of establishing \cite[Theorem 1.1]{BCLM}, the form
\(A_p\) also arises (specifically, in \cite[eq.~(3.11)]{BCLM}) via local computations performed in holomorphic local frames of the bundles \(L\) and \(E\). Furthermore, a direct verification shows that the definition of \(A_p\) given in \eqref{e:Ap} agrees with the corresponding form appearing in \cite{BCLM}.
%
     % In the proof of \cite[Theorem 1.1]{BCLM}, we also obtain a term $A_p$ (more precisely, in \cite[eq. (3.11)]{BCLM}) via the local computations using local holomorphic frames of $L$ and $E$. In fact, one can verify directly that $A_p$ in \eqref{e:Ap} is exactly the same term.
\end{Remark}

\begin{Remark}\label{r:2.6}
   By Remark \ref{r:2.2}, when the bundle $(L,h^L)$ polarizes $(X,\omega)$,that is $\omega=\omega_L=c_1(L,h)$, we have explicit formulas for all $\boldsymbol{b}_0,\boldsymbol{b}_1, \boldsymbol{b}_2$, so that we can also work out more explicit formulas for $\mathcal{A}_2$. In particular, we also have
   $$\mathcal{A}_0\equiv 0,\, \mathcal{A}_1=\frac{1}{8\pi}
\partial\bar\partial r_L^X \Id_E
-\frac{\sqrt{-1}}{2\pi}\nabla^{\End(E),(0,1)}\nabla^{\End(E),(1,0)}
\left(\Lambda_{\omega_L}R^E\right).$$
%The scalar curvature is the contraction of the Ricci form of the dual canonical bundle ........
\end{Remark}

The following result gives an extension of classical Tian's approximation theorem \cite{Ti90, Z98, Ca99} to the vector bundle case.
\begin{Corollary}
    As $p\to\infty$, the curvature form $\Phi_{p}^\star\left(R^{\cT^\star}\right)\in\Omega^{(1,1)}(X,\End(E))$ admits an asymptotic expansion 
    \begin{equation}\label{e:RpExp}
   \Phi_{p}^\star\left(R^{\cT^\star}\right)\sim\sum_{m=0}^\infty p^{1-m} \mathcal{R}_m,
    \end{equation}
    where $ \mathcal{R}_m\in\Omega^{(1,1)}(X,\mathrm{End}(E))$ and
    \begin{equation}
\begin{split}
    \mathcal{R}_{0}=& R^L \Id_E,\\
        \mathcal{R}_{1}=& R^E + \partial\ov{\partial}\log b_0\, \Id_E,\\
 \mathcal{R}_{2}=&\frac{1}{8\pi}
\partial\bar\partial
\left(r_L^X-2\Delta_L\log b_0\right)\Id_E
-\frac{\sqrt{-1}}{2\pi}\nabla^{\End(E),(0,1)}\nabla^{\End(E),(1,0)}
\left(\Lambda_{\omega_L}R^E\right)\\
=& \frac{\partial\bar\partial r_L^X}{8\pi}
\Id_E
-\frac{\sqrt{-1}}{2\pi}\nabla^{\End(E),(0,1)}\nabla^{\End(E),(1,0)}
\left(\Lambda_{\omega_L}\mathcal{R}_{1}\right), \\
\mathcal{R}_3=&-\frac{1}{b_0}\mathcal{R}_2 \boldsymbol{b}_1-\nabla^{\End(E),(1,0)}(b_0^{-1}\boldsymbol{b}_1)\wedge\nabla^{\End(E),(0,1)}(b_0^{-1}\boldsymbol{b}_1)\\
&\quad - \nabla^{\End(E),(0,1)}(\nabla^{\End(E),(1,0)} (b_0^{-1}\boldsymbol{b}_2)),
\end{split}
\label{e:RpTian1}
\end{equation}
where the function $b_0$ is defined in \eqref{e:bzero} and the sections $\boldsymbol{b}_1$, $\boldsymbol{b}_2$ are given in the expansion \eqref{bk2.6} of $P_p(x)$. 
\end{Corollary}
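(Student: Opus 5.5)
The plan is to combine the curvature formula \eqref{e:Rstar} with the expansion of $A_p$ from Proposition \ref{p:ApExp}. By \eqref{e:Rstar} and \eqref{e:Ap}, under the identification $\Psi_p$ of \eqref{e:isostar} we have
\[
\Phi_p^\star(R^{\cT^\star})=R^{L^p\otimes E}+A_p .
\]
Since $L^p$ is a line bundle, $R^{L^p\otimes E}=pR^L\,\Id_E+R^E$, where $R^E$ is independent of $p$. This splits the curvature into an exact polynomial part in $p$ of degree one, plus $A_p$.

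Next I insert $A_p\sim\sum_{m\ge0}p^{-m}\mathcal{A}_m$ from \eqref{e:ApExp}. I then compare coefficients of $p^{1-m}$ against the expansion \eqref{e:RpExp}:
\begin{itemize}
\item at order $p^{1}$, only $R^L\Id_E$ appears, so $\mathcal{R}_0=R^L\Id_E$;
\item at order $p^{0}$, we get $\mathcal{R}_1=R^E+\mathcal{A}_0=R^E+\partial\ov\partial\log b_0\,\Id_E$;
\item for $m\ge2$, $\mathcal{R}_m=\mathcal{A}_{m-1}$.
\end{itemize}
Hence $\mathcal{R}_2=\mathcal{A}_1$ and $\mathcal{R}_3=\mathcal{A}_2$, read off from \eqref{e:ApTian1}. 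In the formula for $\mathcal{A}_2$, the term $\mathcal{A}_1$ is replaced by $\mathcal{R}_2$. The remainder estimates in $\cC^\ell$ are inherited directly from those for $A_p$, because the difference $\Phi_p^\star(R^{\cT^\star})-\sum_{m\le N}p^{1-m}\mathcal{R}_m$ equals $A_p-\sum_{m\le N-1}p^{-m}\mathcal{A}_m$.

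The one step that is not pure bookkeeping is the second expression for $\mathcal{R}_2$. This requires the identity
\[
\frac{1}{8\pi}\partial\ov\partial(-2\Delta_L\log b_0)\Id_E=-\frac{\sqrt{-1}}{2\pi}\nabla^{\End(E),(0,1)}\nabla^{\End(E),(1,0)}\bigl(\Lambda_{\omega_L}(\partial\ov\partial\log b_0\,\Id_E)\bigr).
\]
I would verify it using \eqref{e:DeltaL}, which gives $\Lambda_{\omega_L}(\sqrt{-1}\partial\ov\partial\log b_0)=-\tfrac12\Delta_L\log b_0$. Hence the contraction is $\sqrt{-1}\Lambda_{\omega_L}(\partial\ov\partial\log b_0)\cdot(-\sqrt{-1})$, and the minus signs and factors of $\sqrt{-1}$ need to be tracked carefully.

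For a scalar function $f$, the operator $\nabla^{\End(E),(0,1)}\nabla^{\End(E),(1,0)}(f\,\Id_E)$ reduces to $\ov\partial\partial f\,\Id_E=-\partial\ov\partial f\,\Id_E$. Combining this sign with \eqref{e:DeltaL} should make the $\log b_0$ contributions match. Linearity of $\Lambda_{\omega_L}$ then merges them with the $R^E$ term into $\Lambda_{\omega_L}\mathcal{R}_1$. This sign and normalization check, meaning the convention for $\Lambda_{\omega_L}$ in \eqref{e:ContrL} and the orientation of $\partial\ov\partial$ versus $\ov\partial\partial$, is where I expect the main (purely computational) obstacle to lie.
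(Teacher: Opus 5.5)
Your proposal is correct and follows the paper's own argument. The expansion comes from \eqref{e:Rstar} together with $R^{L^p\otimes E}=pR^L\Id_E+R^E$ and \eqref{e:ApExp}, which gives $\mathcal{R}_1=R^E+\mathcal{A}_0$ and $\mathcal{R}_m=\mathcal{A}_{m-1}$ for $m\geq2$; the second form of $\mathcal{R}_2$ then comes from \eqref{e:DeltaL}, and your sign check closes: $\Lambda_{\omega_L}(\partial\ov{\partial}\log b_0)=\frac{\sqrt{-1}}{2}\Delta_L\log b_0$, and with $\nabla^{\End(E),(0,1)}\nabla^{\End(E),(1,0)}(f\Id_E)=-\partial\ov{\partial}f\,\Id_E$ both sides of your identity equal $-\frac{1}{4\pi}\partial\ov{\partial}\Delta_L\log b_0\,\Id_E$.
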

\begin{proof}
    The expansion \eqref{e:RpExp} follows from \eqref{e:Rstar} and \eqref{e:ApExp}. The formulas in \eqref{e:RpTian1} follow from \eqref{e:DeltaL} and \eqref{e:ApTian1}.
\end{proof}

We also have the following consequence.
\begin{Corollary}
  If  the curvature form $\Phi_{p}^\star\left(R^{\cT^\star}\right)\in\Omega^{(1,1)}(X,\End(E))$ admits the following asymptotic expansion as $p\to\infty$, 
    \begin{equation}\label{e:RpCor}
   \Phi_{p}^\star\left(R^{\cT^\star}\right)=pR^L \Id_E+O(p^{-2}),
    \end{equation}
    that is, $\mathcal{R}_1=\mathcal{R}_2\equiv 0 \in\Omega^{(1,1)}(X,\mathrm{End}(E))$ in \eqref{e:RpExp}, then $(E,b_0^{-1}h^E)$ is unitarily flat and $(X,\omega_L)$ has a constant scalar curvature. The converse also holds.
\end{Corollary}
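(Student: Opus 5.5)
The plan is to read off both conditions directly from the explicit formulas for $\mathcal{R}_1$ and $\mathcal{R}_2$ in \eqref{e:RpTian1}; no further asymptotic analysis is needed. The proof has three steps: interpret $\mathcal{R}_1$ as a curvature, use $\mathcal{R}_1=0$ to simplify $\mathcal{R}_2$, and then apply a maximum principle.

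\emph{Step 1: $\mathcal{R}_1$ is the Chern curvature of $(E,b_0^{-1}h^E)$.} Recall that if $h'=e^{-\varphi}h^E$ with $\varphi$ a smooth real function, then the Chern connection of $h'$ is $\nabla^E-\partial\varphi\,\Id_E$. Its curvature is therefore
\[
R^{E,h'}=R^E+\partial\ov{\partial}\varphi\,\Id_E .
\]
Taking $\varphi=\log b_0$ (so $h'=b_0^{-1}h^E$), this gives
\[
R^{E,b_0^{-1}h^E}=R^E+\partial\ov{\partial}\log b_0\,\Id_E=\mathcal{R}_1 .
\]
Hence $\mathcal{R}_1\equiv0$ holds exactly when the Chern connection of $(E,b_0^{-1}h^E)$ is flat. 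Since this connection is metric, it then has unitary holonomy, which is what "unitarily flat" means.

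\emph{Step 2: simplify $\mathcal{R}_2$.} Use the second expression for $\mathcal{R}_2$ in \eqref{e:RpTian1}:
\[
\mathcal{R}_2=\frac{\partial\ov\partial r^X_L}{8\pi}\Id_E-\frac{\sqrt{-1}}{2\pi}\nabla^{\End(E),(0,1)}\nabla^{\End(E),(1,0)}\bigl(\Lambda_{\omega_L}\mathcal{R}_1\bigr).
\]
This expression was obtained from \eqref{e:DeltaL}, which gives $\Lambda_{\omega_L}(\sqrt{-1}\partial\ov\partial\log b_0)=-\tfrac12\Delta_L\log b_0$. I would recheck that identity briefly, since the argument depends on it. Under $\mathcal{R}_1=0$ the second term vanishes, so $\mathcal{R}_2=\frac{1}{8\pi}\partial\ov\partial r^X_L\,\Id_E$. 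Consequently $\mathcal{R}_2=0$ is equivalent to $\partial\ov\partial r^X_L=0$ on $X$.

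\emph{Step 3: maximum principle.} The scalar curvature $r^X_L$ is a smooth real function. If $\partial\ov\partial r^X_L=0$, then $r^X_L$ is pluriharmonic, so it is locally the real part of a holomorphic function. Because $X$ is compact and connected, $r^X_L$ attains its maximum, and the maximum principle for pluriharmonic functions forces it to be constant. This gives the forward implication.

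\emph{Converse.} Suppose $(E,b_0^{-1}h^E)$ is unitarily flat, so its Chern curvature vanishes. By Step 1, $\mathcal{R}_1=0$. If moreover $r^X_L$ is constant, then $\partial\ov\partial r^X_L=0$, and Step 2 gives $\mathcal{R}_2=0$. Then \eqref{e:RpExp} reduces to $\Phi_p^\star(R^{\cT^\star})=pR^L\Id_E+O(p^{-2})$.

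There is no real obstacle. The only point needing care is consistency of sign and normalization conventions: the sign of $\partial\ov\partial\log b_0$ in the curvature of the twisted metric, and the use of \eqref{e:DeltaL} inside the second formula for $\mathcal{R}_2$.
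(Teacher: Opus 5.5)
Your argument is correct and matches the one implicit in the paper, which states this corollary without a separate proof as a direct consequence of \eqref{e:RpTian1}. There, $\mathcal{R}_1=R^E+\partial\ov{\partial}\log b_0\,\Id_E$ is the Chern curvature of $(E,b_0^{-1}h^E)$, and the second expression for $\mathcal{R}_2$ (rewritten in terms of $\Lambda_{\omega_L}\mathcal{R}_1$ precisely for this purpose) reduces to $\frac{1}{8\pi}\partial\ov{\partial}r^X_L\,\Id_E$ once $\mathcal{R}_1=0$, after which your pluriharmonicity and maximum-principle step gives constancy of $r^X_L$ on the compact connected $X$.
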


Let $A$ be an $r\times r$ matrix of complex numbers. Recall that the {\em elementary invariant polynomials} $\mathfrak{S}_k$ are defined by (see, e.g., \cite[p.\ 402]{GH94}) 
\[\det(A+t\cdot\mathrm{Id}_r)=\sum_{k=0}^r\mathfrak{S}_{r-k}(A)t^k,\;t\in\C,\]
where $\mathrm{Id}_r$ is the $r\times r$ identity matrix. 
We have that $\mathfrak{S}_k(U^{-1}AU)=\mathfrak{S}_k(A)$ for any invertible $r\times r$ matrix $U$ and 
\begin{equation}\label{e:symmpol}
\mathfrak{S}_k(A)=\sum_{\sharp I=k}\det A_{I,I},
\end{equation}
where $A_{I,J}$ denotes the $(I,J)$-th minor $(A_{ij})_{i\in I,j\in J}$ of $A$. Since the wedge product is commutative on differential forms of degree $2$, $\mathfrak{S}_k(A)$
is well-defined in the same way for any $r\times r$ matrix $A$ of forms of degree $2$, namely:
\begin{equation}\label{e:det}
\det(A+t\cdot\mathrm{Id}_r)=\sum_{k=0}^r\mathfrak{S}_{r-k}(A)\wedge t^k.
\end{equation}
where $t$ is a form of degree $2$ or a number.
Then 
\begin{equation}\label{e:Chern-m}
c_k(A):=\mathfrak{S}_k(A)
\end{equation} 
is a form of degree $2k$ given by \eqref{e:symmpol}.

\begin{Definition}\label{def:Chern}
The total Chern form associated to a Hermitian holomorphic vector 
bundle $(E,h^{E})$ of rank $r$ is defined by
\begin{equation}\label{e:Chern1}
c(E,h^{E})=\det\Big(\frac{\sqrt{-1}}{2\pi}R^{E}+\mathrm{Id}_{E}\Big)\in
\bigoplus_{k=0}^{r} \Omega^{(k,k)}(X,\R).
\end{equation}
The $(k,k)$-component of $c(E,h^{E})$ is called the $k$-th Chern form of $(E,h^E)$, that is,
\begin{equation}\label{e:ckform}
c_k(E,h^E):=\mathfrak{S}_k\left(\frac{\sqrt{-1}}{2\pi}R^{E}\right)\in \Omega^{(k,k)}(X,\R)
\end{equation} 
\end{Definition}

\begin{proof}[Proof of Theorem \ref{T:Tian}]
By \eqref{e:Rstar}, \eqref{e:RpExp} and the definition of the total Chern form, we obtain
\begin{equation}\label{e:ChernPf}
\begin{split}
    \Phi_p^\star(c(\cT^\star, h^{\cT^\star}))&=\det\left(\frac{\sqrt{-1}}{2\pi}\Phi_p^\star(R^{\cT^\star})+\mathrm{Id}_{E}\right)\\
    &=\det\left(\frac{\sqrt{-1}}{2\pi}\left(R^E+\widehat{A}_p\right)+(pc_1(L,h^L)+\alpha_L+1)\mathrm{Id}_{E}\right).
\end{split}
\end{equation}
Then \eqref{e:Chern} follows directly from \eqref{e:det}.

Since $\mathfrak{S}_k$ is a polynomial in its variable and $\widehat{A}_p$ admits an asymptotic expansion, we obtain the existence of the asymptotic expansion of $\Phi_p^\star(c(\cT^\star, h^{\cT^\star}))$ on $X$ as $p\to \infty$. Then, after taking the $(k,k)$-components, we obtain the expansion \eqref{e:Tian}.

Now we need to calculate $\boldsymbol{T}_{k,\ell}$, $\ell=0,1,2$. Taking the $(k,k)$-components of \eqref{e:Chern}, we have
\begin{equation}\label{e:ckid}
\Phi_p^\star(c_k(\cT^\star,h^{\cT^\star}))=\sum_{\ell=0}^k\binom{r-k+\ell}{\ell} (pc_1(L,h^L)+\alpha_L)^\ell \mathfrak{S}_{k-\ell}\left(\frac{\sqrt{-1}}{2\pi}\left(R^E+\widehat{A}_p\right)\right).
\end{equation}

Note that by Proposition \ref{p:ApExp}, we have
\begin{equation}
    \widehat{A}_p=\frac{1}{8\pi p}
\partial\bar\partial
\left(r_L^X-2\Delta_L\log b_0\right)\Id_E
-\frac{\sqrt{-1}}{2\pi p}\nabla^{\End(E),(0,1)}\nabla^{\End(E),(1,0)}
\left(\Lambda_{\omega_L}R^E\right) +O(p^{-2}),
\end{equation}
Then $\mathfrak{S}_{0}\left(\frac{\sqrt{-1}}{2\pi}\left(R^E+\widehat{A}_p\right)\right)\equiv 1$ and
\begin{equation}
\begin{split}
     \mathfrak{S}_{1}\left(\frac{\sqrt{-1}}{2\pi}\left(R^E+\widehat{A}_p\right)\right)= &  c_1(E,h^E) +r\frac{\sqrt{-1}}{16\pi^2 p}\partial\ov{\partial} r^X_L \\
     & + \frac{\sqrt{-1}}{2\pi p}\partial\ov{\partial}\left(\Lambda_{\omega_L}c_1(E,h^E)+r\Lambda_{\omega_L}\alpha_L\right)+O(p^{-2}),\\
      \mathfrak{S}_{k}\left(\frac{\sqrt{-1}}{2\pi}\left(R^E+\widehat{A}_p\right)\right)= &c_k(E,h^E)+O(p^{-1}), \, 0\leq k\leq r.
\end{split}
\label{e:proof1}
\end{equation}

For the term $\boldsymbol{T}_{k,0}$, it corresponding to the term $\ell=k$ in \eqref{e:ckid}, we obtain the first formula in \eqref{e:Tian1}.

When $B_p$ has an asymptotic expansion $B_p\sim \sum_m p^{m}\mathcal{B}_m$, set $(B_p)^{[m]}:=\mathcal{B}_m$. Then we have by \eqref{e:ckid},
\begin{equation}
\begin{split}
\boldsymbol{T}_{k,1} =& \binom{r}{k}\left((pc_1(L,h^L)+\alpha_L)^{k}\right)^{[k-1]}\\
&+\binom{r-1}{k-1}\left((pc_1(L,h^L)+\alpha_L)^{k-1}\right)^{[k-1]}\mathfrak{S}_{1}\left(\frac{\sqrt{-1}}{2\pi}\left(R^E+\widehat{A}_p\right)\right)^{[0]}\\
\boldsymbol{T}_{k,2} =& \binom{r}{k}\left((pc_1(L,h^L)+\alpha_L)^{k}\right)^{[k-2]}\\
&+\binom{r-1}{k-1}\left((pc_1(L,h^L)+\alpha_L)^{k-1}\right)^{[k-2]}\mathfrak{S}_{1}\left(\frac{\sqrt{-1}}{2\pi}\left(R^E+\widehat{A}_p\right)\right)^{[0]}\\
&+\binom{r-1}{k-1}\left((pc_1(L,h^L)+\alpha_L)^{k-1}\right)^{[k-1]}\mathfrak{S}_{1}\left(\frac{\sqrt{-1}}{2\pi}\left(R^E+\widehat{A}_p\right)\right)^{[-1]}\\
&+\binom{r-2}{k-2}\left((pc_1(L,h^L)+\alpha_L)^{k-2}\right)^{[k-2]}\mathfrak{S}_{2}\left(\frac{\sqrt{-1}}{2\pi}\left(R^E+\widehat{A}_p\right)\right)^{[0]}
\end{split}
\label{e:proof2}
\end{equation}
At last, combining \eqref{e:proof1} and \eqref{e:proof2}, we obtain the rest part of \eqref{e:Tian1}. The proof is completed.
\end{proof}

\begin{Remark}
Since we already have an explicit expression for the polynomial $\mathfrak{S}_{2}$, 
we can obtain an explicit formula for $\boldsymbol{T}_{k,3}$ 
by applying the same procedure as in \eqref{e:proof2}.
If the function $b_0 = c_1(L, h^L)^n / \omega^n$ is constant on $X$ 
(for instance, when the bundle $(L, h^L)$ polarizes $(X, \omega)$, 
that is, when $c_1(L, h) = \omega$), then $\alpha_L$ vanishes identically. 
In this situation, the expressions for $\boldsymbol{T}_{k,1}$ and $\boldsymbol{T}_{k, 2}$ in \eqref{e:Tian1} simplify.
\end{Remark}

%%%%%

\section{Meromorphic transforms and degeneracy sets}\label{S:MTD}

%We study in this section certain meromorphic transforms related to 
%degeneracy sets of holomorphic 
%sections, which will be used for the proof of Theorem \ref{T:distrib}.

In this section, we introduce the geometric object that will be used later to pass from
expectation formulas to almost sure equidistribution statements. To a holomorphic vector
bundle \(E\) generated by its global sections and to an integer \(1\leq k\leq r=\rank E\), we
associate a meromorphic transform whose graph is the incidence relation
\(x\in D_k(s_1,\ldots,s_k)\).
This transform encodes the family of degeneracy loci of \(k\)-tuples of sections as the
fibers of a single analytic correspondence.

The main results of the section are Theorems \ref{T:MTD} and \ref{T:MTGdeg}. First, we
show that the incidence set defining the transform is irreducible, has the expected
dimension, and gives a meromorphic transform from \(X\) to \((\P V)^k\). Second, we
compute the relevant top intermediate degrees of this transform. These degrees are
expressed in terms of the Chern classes \(c_{r+1-k}(E)\) and \(c_{r-k}(E)\). This
identification is the key input needed in Section \ref{S:distrib}, where the
Dinh–Sibony equidistribution theorem is applied to the sequence of transforms
associated with the bundles \(L^p\otimes E\).

We will work in the following general setting:  

\medskip

(A) $(X,\omega)$ is a (connected) compact K\"ahler manifold of dimension $n$; $(E,h^E)$
is a Hermitian holomorphic vector bundle of rank $r$ on $X$, and
\[1\leq r\leq n,\;V:=H^0(X,E),\;N:=\dim V-1,\;\P V=\P H^0(X,E).\]

\smallskip

(B) For every section $S\in V$, $Z_S:=\{x\in X:\,S(x)=0\}\neq\varnothing$.

\smallskip

(C) $N\geq r$ and for every $x\in X$, $V$ spans the fiber $E_x$ of $E$ over $x$.

\medskip

We note in Section \ref{S:distrib} that assumptions (A)-(C) hold for 
vector bundles of the form $L^p\otimes E$, where $L$ is a 
positive line bundle and $p$ is sufficiently large. 
We endow $V$ with the $L^2$-inner product 
\begin{equation}\label{e:ip2}
(S,S')=\int_X\langle S,S'\rangle_{h^E}\,\frac{\omega^n}{n!}\,,\,\;S,S'\in V,
\end{equation}
defined as in \eqref{e:ip1} using the Hermitian metric $h^E$. 
We consider the dual $V^\star$ with the inner product determined
by that on $V$, and we let $\omega_\FS$ be the induced Fubini-Study
form on the $N$-dimensional projective space $\P V$. 

Let $\G(k,V)$ denote the Grassmannian of (complex) $k$-planes in $V$. 
By (C), the Kodaira map 
\begin{equation}\label{e:Kod2}
\Phi_E:X\to\G(N+1-r,V)=\G(r,V^\star),\;\Phi_E(x)=\{s\in V:\,s(x)=0\},
\end{equation}
is well defined and holomorphic. Let $\cT\to \mathbb{G}(r,V^\star)$ 
be the universal bundle endowed with the Hermitian metric $h^{\cT}$ 
induced by the inner product on $V^\star$, and let 
$(\cT^\star,h^{\cT^\star})$ be the dual holomorphic vector bundle.

Let $1\leq k\leq r$. Recall that, given $\bs=(s_1,\ldots,s_k)\in(\P V)^k$ (or $\bs=(s_1,\ldots,s_k)\in V^k$), 
the degeneracy set $D_k(\bs)$ is defined by
\begin{equation}\label{e:degset}
D_k(\bs)=D_k(s_1,\ldots,s_k)=\{x\in X:\,s_1(x)\wedge\ldots\wedge s_k(x)=0\}.
\end{equation}
For generic $\bs=(s_1,\ldots,s_k)\in(\P V)^k$, $D_k(\bs)$ is an analytic subset of $X$ 
of pure dimension $n+k-r-1$, smooth away from the degeneracy set $D_{k-1}(s_1,\ldots,s_{k-1})$. 
Moreover, Bertini's theorem for vector bundles \cite[Theorem 1.2]{MZ23} 
implies that $Z_s$ is a complex submanifold of $X$ of dimension $n-r$
for all $s$ outside a proper analytic subset of $\P V$. Hence the current of integration $[D_k(\bs)]$ 
along $D_k(\bs)$ is a well-defined positive closed current of bidegree $(r+1-k,r+1-k)$ on $X$. 

Let $\mu_k$ be a probability measure on $(\P V)^k$, and 
$\E_k(\mu_k)=\E([D_k(s_1,\ldots,s_k)],\mu_k)$ 
be the expectation current of the current valued random variable 
\[(\P V)^k\ni(s_1,\ldots,s_k)\longmapsto[D_k(s_1,\ldots,s_k)],\] 
defined by 
\begin{equation}\label{e:expk}
\langle \E_k(\mu_k),\phi \rangle=
\int_{(\P V)^k}\langle[D_k(s_1,\ldots,s_k)],\phi\rangle\,d\mu_k\,,
\end{equation}
where $\phi$ is a smooth $(n+k-r-1,n+k-r-1)$ form on $X$. 
Using the fact that the Poincar\'e dual
of $D_k(s_1,\ldots,s_k)$ is $c_{r+1-k}(E)$, we obtained in \cite{BCLM} 
a precise formula for the expectation currents in the case 
of Gaussian or Fubini-Study volumes.
%===
\begin{Theorem}[{\cite[Theorem 1.4]{BCLM}}]\label{T:expdeg}
Let $(X,\omega),\,(E,h^E)$ verify (A)-(C). 
Assume that the Kodaira map $\Phi_E:X\to\G(r,V^\star)$ 
defined in \eqref{e:Kod2} is an embedding. 
For $1\leq k\leq r$, let $\nu_k$ be the Gaussian probability measure 
on $V^k$ and $\mu_k$ be the product measure on $(\P V)^k$
determined by the Fubini-Study volume $\omega_\FS^N$ on $\P V$.
Then the expectation currents $\E_k(\mu_k), \E_k(\nu_k)$ are well defined positive closed currents 
and are given by 
\[\E_k(\mu_k)=\E_k(\nu_k)=
\Phi_E^\star\big(c_{r+1-k}(\cT^\star,h^{\cT^\star})\big),\;1\leq k\leq r.\]
\end{Theorem}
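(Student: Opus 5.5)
The statement is Theorem~\ref{T:expdeg}, quoted from \cite{BCLM}. I would prove it by reducing the degeneracy current of $k$ random sections to a pullback, under the Kodaira map, of a degeneracy current on the Grassmannian. Then I would average over the unitary group.

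\textbf{Step 1: pushing everything to the Grassmannian.} For $s\in V$ the section $\sigma_s$ of $\cT^\star$ satisfies $\Phi_E^\star\sigma_s=s$ under \eqref{e:isostar}. Hence
\[
D_k(s_1,\ldots,s_k)=\Phi_E^{-1}\big(D_k(\sigma_{s_1},\ldots,\sigma_{s_k})\big).
\]
Since $\Phi_E$ is an embedding, for generic $\bs$ the two loci meet properly. So $[D_k(\bs)]=\Phi_E^\star[D_k(\sigma_{s_1},\ldots,\sigma_{s_k})]$, as a pullback of currents defined off a proper analytic subset of $(\P V)^k$. It therefore suffices to show that the expectation of $[D_k(\sigma_{\bs})]$ on $\G(r,V^\star)$ is the smooth form $c_{r+1-k}(\cT^\star,h^{\cT^\star})$. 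Pulling back a smooth form commutes with integration over the parameters, which gives the identity on $X$. Positivity and closedness of $\E_k$ follow because it is an average of positive closed currents whose mass is uniformly bounded (cohomologous to $c_{r+1-k}(E)$).

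\textbf{Step 2: invariance.} The unitary group $U(V)$ acts transitively on $\G(r,V^\star)$ and preserves $h^{\cT^\star}$, hence preserves $c_{r+1-k}(\cT^\star,h^{\cT^\star})$. The measures $\mu_k$ and $\nu_k$ are also $U(V)$-invariant. Therefore the expectation current $\E^{\G}$ on the Grassmannian is a $U(V)$-invariant closed positive current. Smoothing by convolution over the group shows it is a smooth invariant form.

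To identify it, I would compute its cohomology class: by Poincaré duality it is $c_{r+1-k}(\cT^\star)$. On a compact Hermitian symmetric space, invariant forms are harmonic, so an invariant closed form is determined by its class. This gives $\E^{\G}=c_{r+1-k}(\cT^\star,h^{\cT^\star})$. The equality $\E_k(\mu_k)=\E_k(\nu_k)$ is immediate because $D_k$ depends only on the lines $[s_i]$, and the Gaussian measure pushes forward to $\omega_\FS^N$.

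\textbf{Main obstacle.} The delicate step is justifying the pullback formula in Step~1 at the level of currents. This requires well-definedness of $\E_k$, i.e.\ local integrability of $\bs\mapsto\langle[D_k(\bs)],\phi\rangle$ near the bad set where the codimension jumps. It also requires exchanging $\Phi_E^\star$ with the integral.

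I would handle this with a slicing/incidence argument, which is the meromorphic-transform viewpoint of this section. Write $\E_k$ as $(\pi_1)_\star\big([\Gamma_{E,k}]\wedge\pi_2^\star\mu_k\big)$, where $\Gamma_{E,k}\subset X\times(\P V)^k$ is the incidence variety. Once $\Gamma_{E,k}$ is shown to be irreducible of the expected dimension, this expression is automatically a well-defined positive closed current. The identification with the Chern form then follows as above.
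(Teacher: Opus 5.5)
The paper quotes this result from \cite{BCLM} without proof, so there is no argument here to compare against. Your Grassmannian half is sound. This covers the $U(V)$-invariance of $\mu_k$, $\nu_k$ and $h^{\cT^\star}$, and the smoothness of a current invariant under a transitive compact group. It also covers Cartan's theorem that invariant forms on the Hermitian symmetric space $\G(r,V^\star)$ are harmonic, Poincar\'e duality for the Schubert-type locus $D_k(\sigma_{\bs})$, and the Gaussian-to-Fubini--Study reduction.

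The gap is exactly the step you flag as the main obstacle, and neither of your justifications closes it. In Step~1 you say that ``pulling back a smooth form commutes with integration over the parameters.'' But the integrands $[D_k(\sigma_{\bs})]$ are singular currents; only their average is smooth. Pullback of currents is not continuous. So knowing $[D_k(\bs)]=\Phi_E^\star[D_k(\sigma_{\bs})]$ for generic $\bs$, together with smoothness of $\E^{\G}$, does not give $\int\Phi_E^\star[D_k(\sigma_{\bs})]\,d\mu_k=\Phi_E^\star\int[D_k(\sigma_{\bs})]\,d\mu_k$. Your incidence-variety fix only shows that $(\pi_{1,k})_\star([\Gamma_{E,k}]\wedge\pi_{2,k}^\star\mu_k)$ is a well-defined positive closed current equal to $\E_k(\mu_k)$; irreducibility of $\Gamma_{E,k}$ is not what matters there. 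Your phrase ``follows as above'' then falls back on the same unjustified interchange. Separately, properness of $\Phi_E(X)\cap D_k(\sigma_{\bs})$ for generic $\bs$ comes from the dimension count or Kleiman's theorem, not from $\Phi_E$ being an embedding.

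The missing idea is to use homogeneity a second time, on the incidence variety rather than on individual currents. Let $\Gamma^{\G}_k=\{(W,\bs):\sigma_{s_1}(W)\wedge\cdots\wedge\sigma_{s_k}(W)=0\}\subset\G(r,V^\star)\times(\P V)^k$.

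\emph{Local triviality.} $\Gamma^{\G}_k$ is invariant under the diagonal action of $GL(V)$. Let $\tau:U\to GL(V)$ be a holomorphic local section of $g\mapsto g\cdot W_0$, and let $F_{W_0}$ be the fiber of $\Gamma^{\G}_k$ over $W_0$. Then the biholomorphism $(W,\bs_0)\mapsto(W,\tau(W)\bs_0)$ carries $U\times F_{W_0}$ onto $\Gamma^{\G}_k\cap(U\times(\P V)^k)$.

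\emph{Base change.} From the identification $\Phi_E^\star\sigma_s=s$ we get $\Gamma_{E,k}=(\Phi_E\times\mathrm{id})^{-1}(\Gamma^{\G}_k)$. Hence the map $(x,\bs_0)\mapsto(x,\tau(\Phi_E(x))\bs_0)$ carries $\Phi_E^{-1}(U)\times F_{W_0}$ onto $\Gamma_{E,k}$ over $\Phi_E^{-1}(U)$.

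\emph{Conclusion.} Put $\Psi(W,\bs_0)=\tau(W)\bs_0$. Both push-forwards are then fiber integrals over the same fixed compact analytic set $F_{W_0}$: of $\Psi^\star\mu_k$ and of $(\Phi_E\times\mathrm{id})^\star\Psi^\star\mu_k$, respectively. Fiber integration commutes with pullback in the base. This gives $\E_k(\mu_k)=\Phi_E^\star\E^{\G}$, and even shows $\E_k(\mu_k)$ is smooth, without using the embedding hypothesis or generic transversality.

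Equivalently, you can run a Crofton-type argument through the submersion $U(V)\times X\to\G(r,V^\star)$, $(g,x)\mapsto g^{-1}\Phi_E(x)$, along which pullback of currents is continuous.
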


Here, the expectation current $\E_k(\nu_k)$ is defined as 
in \eqref{e:expk} for the probability space $(V^k,\nu_k)$. For $k=1$, 
Theorem \ref{T:expdeg} was established using different methods 
by J.\ Sun \cite[Theorem 4.2]{Sun}. 

\medskip

Let us recall the notion of meromorphic transform introduced by Dinh and Sibony \cite{DS06}. If $X_1,\,X_2$ are connected compact K\"ahler manifolds of dimension $k_1,\,k_2$ and $l$ is an integer such that $k_1-k_2\leq l\leq k_1$, a meromorphic transform $F$ from $X_1$ to $X_2$ is an irreducible analytic subset $\Gamma\subset X_1\times X_2$ of dimension $k_2+l$ such that the restrictions of the canonical projections $X_1\times X_2\to X_j$ to $\Gamma$ are surjective (see \cite[Section\ 3.1]{DS06} for the general definition). One calls $\Gamma$ the graph of $F$ and $l$ the codimension of $F$. 

\smallskip

We now introduce the meromorphic transforms associated with degeneracy sets. 
Let $(X,\omega),\,(E,h^E),\,V,\,N$ verify assumptions (A)-(C).
For $1\leq k\leq r$, let $F_{E,k}$ be the meromorphic transform from $X$ to $(\P V)^k$ with graph
\begin{equation}\label{e:Gamma_k}
\Gamma_{E,k}=\{(x,\bs)\in X\times(\P V)^k:\,x\in D_k(\bs)\}\subset X\times(\P V)^k.
\end{equation}
Let $\pi_{1,k}:X\times(\P V)^k\to X$, $\pi_{2,k}:X\times(\P V)^k\to(\P V)^k$ be the canonical projections.
Note that for $k=1$, $F_{E,1}=F_E$ is the meromorphic transform associated to the Kodaira map $\Phi_E$ defined in \eqref{e:Kod2} that was employed in \cite{BCLM} (see\cite[Proposition 4.2]{BCLM}).

\begin{Theorem}\label{T:MTD}
Let $(X,\omega),\,(E,h^E)$ verify assumptions (A)-(C). Then $\Gamma_{E,k}$ is an irreducible analytic subset of $X\times(\P V)^k$ of dimension $kN+n+k-r-1$ and $\Gamma_{E,k}\setminus(\Gamma_{E,k-1}\times\P V)$ is a connected complex manifold. Moreover, $F_{E,k}$ is a meromorphic transform of codimension $n+k-r-1$.
\end{Theorem}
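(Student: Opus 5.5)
The plan is to study $\Gamma_{E,k}$ through its projection $\pi_{1,k}$ to $X$. Fix $x\in X$ and consider the evaluation map $\mathrm{ev}_x:V\to E_x$. By (C) it is surjective, so its kernel $\Phi_E(x)$ has dimension $N+1-r$. The fiber $\pi_{1,k}^{-1}(x)\cap\Gamma_{E,k}$ consists of the $k$-tuples $([s_1],\ldots,[s_k])$ such that $s_1(x),\ldots,s_k(x)$ are linearly dependent in $E_x\cong\C^r$.

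I would first work on the cone $V^k$ and pass to $(\P V)^k$ at the end. Since $\mathrm{ev}_x^{\oplus k}:V^k\to E_x^{\oplus k}=\mathrm{Hom}(\C^k,E_x)$ is a surjective linear map, the fiber over $x$ is the preimage of the determinantal variety $M_x=\{A\in\mathrm{Hom}(\C^k,E_x):\rank A\le k-1\}$. This variety is irreducible, has codimension $r-k+1$ in $\C^{rk}$, and its smooth locus is $\{\rank A=k-1\}$. The preimage is a linear fibration over $M_x$, with fibers isomorphic to $\Phi_E(x)^k$, so it is an irreducible cone of codimension $r+1-k$ in $V^k$.

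To globalize, I would use the holomorphic vector bundle $\mathcal K\subset X\times V$ with fibers $\Phi_E(x)$, which is the pullback of the universal subbundle under $\Phi_E$. The surjection $X\times V^k\to\mathrm{Hom}(\C^k,E)$ has kernel $\mathcal K^{\oplus k}$, and the incidence variety in $X\times V^k$ is the preimage of the relative determinantal variety $\mathcal M\subset \mathrm{Hom}(\C^k,E)$. The variety $\mathcal M$ is locally trivial over $X$ with irreducible fiber $M_x$, so it is irreducible because $X$ is connected; the same then holds for the preimage. Its dimension is
$$n+k(N+1)-(r+1-k).$$
Projectivizing each factor in $V^k$ removes $k$ dimensions, giving
$$\dim\Gamma_{E,k}=kN+n+k-r-1.$$
One technical point arises here. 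Points where some $s_i=0$ do not lie in $(\P V)^k$, but the tuples with some $s_i(x)=0$ are absorbed into $M_x$. So I will instead define $\Gamma_{E,k}$ as the image in $X\times(\P V)^k$ of the open set $\{s_i\neq0\ \forall i\}$ of the cone. That open set is dense in an irreducible set, so the image is irreducible, and by $\C^*$-invariance its closure equals $\Gamma_{E,k}$ as defined.

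For the manifold statement, the complement of $\Gamma_{E,k-1}\times\P V$ corresponds to tuples with $s_1(x),\ldots,s_{k-1}(x)$ independent. There the matrix $A$ has rank exactly $k-1$, so it is a smooth point of $M_x$; indeed the determinantal variety is smooth precisely off the rank $\le k-2$ locus. Hence this part of $\Gamma_{E,k}$ is smooth. It is also the complement of a proper analytic subset in an irreducible space, so it is connected. Alternatively, one can describe it explicitly as a bundle over the connected manifold $\Gamma_{E,k-1}^c$, with fiber $\{s_k: s_k(x)\in\Span(s_1(x),\ldots,s_{k-1}(x))\}$, which is a projective linear subspace minus nothing.

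Finally I must verify surjectivity of both projections. Projection to $X$ is surjective because each fiber is nonempty; for instance, take $s_1\in\Phi_E(x)\setminus\{0\}$, which is possible since $N+1-r\ge1$ by (C). For projection to $(\P V)^k$, I need that every $\bs$ has $D_k(\bs)\neq\varnothing$, and this follows from (B): $D_k(\bs)\supset D_1(s_1)=Z_{s_1}\neq\varnothing$. The codimension is then
$$l=\dim\Gamma_{E,k}-\dim(\P V)^k=n+k-r-1.$$
It satisfies $n-kN\le l\le n$, since $r\le n$ and $k\le r$, and $l\ge0$. I expect the main obstacle to be the careful irreducibility and closure argument passing from $V^k$ to $(\P V)^k$. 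In particular, I must check that $\Gamma_{E,k}$ has no extra components supported where some section vanishes at $x$. That check amounts to showing such points are limits of points with all $s_i(x)\neq0$, which follows from the irreducibility of $M_x$ and the fact that the relevant locus is a proper Zariski closed subset of it.
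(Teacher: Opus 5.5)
Your proof is correct, but it takes a different route from the paper.

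\textbf{The paper's route.} It argues by induction on $k$. $\Gamma_{E,1}$ is a $\P^{N-r}$-bundle over $X$, and $\Gamma_{E,k}\setminus(\Gamma_{E,k-1}\times\P V)$ is a $\P^{N-r+k-1}$-bundle over the connected Zariski-open set $\big(X\times(\P V)^{k-1}\big)\setminus\Gamma_{E,k-1}$. This is exactly your ``alternative'' description. Irreducibility of $\Gamma_{E,k}$ is then read off from the fact that the leftover piece $\Gamma_{E,k-1}\times\P V$ has dimension one less. The only input is that the fiber conditions are linear, and the projective-bundle (hence manifold) structure comes out explicitly.

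\textbf{Your route.} You fiber over $X$ itself rather than over $X\times(\P V)^{k-1}$. The affine cone over $\Gamma_{E,k}$ in $X\times V^k$ is the preimage of the relative generic determinantal variety in $E^{\oplus k}$ under a surjective bundle map with kernel $\mathcal K^{\oplus k}$. Locally it is therefore $U\times M\times\C^{k(N+1-r)}$, where $M$ is the variety of $r\times k$ matrices of rank $\le k-1$.

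\textbf{What each buys.} Your route imports the classical facts about $M$: irreducibility, codimension $r-k+1$, and smoothness exactly at rank $k-1$. In exchange it needs no induction and gives irreducibility of all of $\Gamma_{E,k}$ in one stroke. In particular, it shows automatically that $\Gamma_{E,k-1}\times\P V$ lies in the closure of the top-dimensional smooth part. The paper's dimension count passes over this point quickly, since a lower-dimensional analytic subset could a priori be a separate component. To close it there, one needs either a direct approximation argument or the fact that every component of a locus cut out by the maximal minors of a $k\times r$ matrix has codimension at most $r-k+1$.

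\textbf{Cleanup.} Your ``technical point'' conflates $s_i=0$ in $V$ with $s_i(x)=0$. The set where all $s_i\neq0$ in $V$ is a nonempty Zariski-open subset of the irreducible cone, hence irreducible. Its image under the $(\C^*)^k$-quotient is exactly $\Gamma_{E,k}$, because points with $s_i(x)=0$ but $s_i\neq 0$ are already included. So no closure or limit argument is needed at all. The limit check you sketch at the end would in fact fail for $k=1$, where $M_x=\{0\}$, but nothing depends on it.

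You should also state explicitly why $\Gamma_{E,k}$ is analytic. Either use the local $k\times k$ minors, or note that its preimage under the submersion $X\times(V\setminus\{0\})^k\to X\times(\P V)^k$ is analytic and use local sections of that map.
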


\begin{proof} We show first that $\Gamma_{E,k}$ is an analytic subset of $X\times(\P V)^k$. 
If $S_0,\ldots,S_N$ is a basis of $V$, we use the identification  
$\P^N\ni[x_0:\ldots:x_N]\longmapsto x_0S_0+\ldots+x_NS_N\in\P V$.
Let $U\subset X$ be an open set such that $E|_U$ 
has a holomorphic frame $e_1,\ldots,e_r$, and write 
\[S_j=\sum_{\ell=1}^rh_{j\ell}e_\ell, \text{ where } 
h_{j\ell}\in\cO_X(U).\]
If $\bs=(s_1,\ldots,s_k)\in(\P V)^k$ there exist 
$a_m=[a_{m0}:\ldots:a_{mN}]\in\P^N$ such that 
\[s_m=\sum_{j=0}^Na_{mj}S_j=
\sum_{\ell=1}^r\left(\sum_{j=0}^Na_{mj}h_{j\ell}\right)e_\ell,\;
1\leq m\leq k.\]
Therefore $(x,\bs)\in\Gamma_{E,k}\cap(U\times(\P^N)^k)$ if and only if $\rank A(x,\bs)\leq k-1$, where 
\begin{equation}\label{e:defeqdeg}
A(x,\bs)=A(x,a_1,\ldots,a_k)=
\left[\sum_{j=0}^Na_{mj}h_{j\ell}(x)\right]_{1\leq m\leq k,1\leq\ell\leq r}
\end{equation}
is a $k\times r$ matrix of holomorphic functions on $(\P^N)^k\times U$. 
Thus $\Gamma_{E,k}\cap(U\times(\P^N)^k)$ is defined by $\binom{r}{k}$ equations, given by the 
vanishing of all the $k\times k$ minors of $A(x,\bs)$. 

We now proceed by induction on $k$. For $k=1$, it follows 
by assumption (C) that the linear map $S\in V\to S(x)\in E_x$ 
has rank $r$ for every $x\in X$, so $\{s\in\P V:\,s(x)=0\}$ 
is a linear subspace of dimension $N-r$ of $\P V$ and $\Gamma_E$ 
is a connected complex submanifold of $X\times\P V$ of dimension 
$N+n-r$ (a projective bundle over $X$).

Assume next that $\Gamma_{E,k-1}$ is an irreducible analytic subset of $X\times(\P V)^{k-1}$ of dimension $(k-1)N+n+k-r-2$. Then $\mathcal U=(X\times(\P V)^{k-1})\setminus\Gamma_{E,k-1}$ is a Zariski open, hence a connected, dense, open subset of $X\times(\P V)^{k-1}$. Writing $\bs=(\bs',s_k)\in(\P V)^{k-1}\times\P V$, where $\bs'=(s_1,\ldots,s_{k-1})\in(\P V)^{k-1}$, we have
\begin{align*}
\Gamma_{E,k}\setminus(&\Gamma_{E,k-1}\times\P V)\\ 
&=\{(x,\bs',s_k)\in X\times(\P V)^k:\,x\not\in D_{k-1}(\bs'),\,s_1(x)\wedge\ldots\wedge s_{k-1}(x)\wedge s_k(x)=0\} \\
&=\{(x,\bs'),s_k)\in\mathcal U\times\P V:\,s_k(x)\in\Span(s_1(x),\ldots,s_{k-1}(x))\}.
\end{align*}
Since $s_1(x),\ldots,s_{k-1}(x)\in E_x$ are linearly independent, it follows from (C) that 
\[\{s_k\in\P V:\,s_k(x)\in\Span(s_1(x),\ldots,s_{k-1}(x))\}\]
is a linear subspace of dimension $k-1+N-r$ of $\P V$. We infer that 
$\Gamma_{E,k}\setminus(\Gamma_{E,k-1}\times\P V)$ is a projective bundle of rank 
$k-1+N-r$ over the connected open set $\mathcal U\subset X\times(\P V)^{k-1}$, hence 
a connected complex manifold of dimension 
\[(k-1)N+n+k-1+N-r=kN+n+k-r-1.\]
By the induction hypothesis, $\Gamma_{E,k-1}\times\P V\subset\Gamma_{E,k}$ has dimension $kN+n+k-r-2$. 
Therefore $\Gamma_{E,k}$ is an irreducible analytic subset of $X\times(\P V)^k$ of dimension $kN+n+k-r-1$.

It remains to show that the restrictions of the canonical projections $\pi_{1,k},\pi_{2,k}$ to 
$\Gamma_{E,k}$ are surjective. Indeed, using (B), if $\bs=(s_1,\ldots,s_k)\in(\P V)^k$ and 
$x\in Z_{s_k}$ then $(x,\bs)\in\Gamma_{E,k}$. Condition (C) implies that for any $x\in X$ 
there exists $s\in\P V$ with $s(x)=0$, so $(x,\bs)\in\Gamma_{E,k}$, where $\bs=(\bs',s)$ 
for some $\bs'\in(\P V)^{k-1}$.
\end{proof}

Let $I_2(F_{E,k})=\{\bs\in(\P V)^k:\,\dim D_k(\bs)>n+k-r-1\}$ denote the second indeterminacy 
set of $F_{E,k}$. Then $\codim I_2(F_{E,k})\geq2$, and for $\bs\in(\P V)^k\setminus I_2(F_{E,k})$, 
the current of integration $[D_k(\bs)]$ has bidegree $(r+1-k,r+1-k)$. We let $\delta_\bs$ be Dirac 
mass at $\bs\in(\P V)^k$, viewed as a probability measure on $(\P V)^k$.

By \cite[Section 3.1]{DS06}, the pull-back of a current $T$ on $(\P V)^k$ of 
bidegree $(m,m)$, where $kN+k-r-1\leq m\leq kN$, is defined by 
\begin{equation}\label{e:pullback0}
F_{E,k}^\star(T)=(\pi_{1,k})_\star\big(\pi_{2,k}^\star(T)\wedge[\Gamma_{E,k}]\big),
\end{equation} 
in the case when $T$ is smooth, or when $T=[H]$ is the current of integration over an analytic 
subset $H\subset(\P V)^k$ of pure dimension $kN-m$ such that 
\[\dim(\pi_{2,k}|_{\Gamma_{E,k}})^{-1}(H\cap I_2(F_{E,k}))\leq kN-m+n+k-r-2.\]
Then $F_{E,k}^\star(T)$ is a current of bidegree $(m-kN+r+1-k,m-kN+r+1-k)$ on $X$. If $T$ is smooth, 
then $F_{E,k}^\star(T)$ is given by a form with coefficients in $L^1(X)$. If $T=[H]$ then 
$(\pi_{2,k}|_{\Gamma_{E,k}})^\star([H])$ is defined to be the current of integration 
$[(\pi_{2,k}|_{\Gamma_{E,k}})^{-1}(H)]$ over the analytic subset 
$(\pi_{2,k}|_{\Gamma_{E,k}})^{-1}(H)=\pi_{2,k}^{-1}(H)\cap\Gamma_{E,k}$ 
which has pure dimension $kN-m+n+k-r-1$. Its push-forward 
$F_{E,k}^\star(T)=(\pi_{1,k}|_{\Gamma_{E,k}})_\star[(\pi_{2,k}|_{\Gamma_{E,k}})^{-1}(H)]$ 
is a positive closed current on $X$ supported in the analytic set 
\[F_{E,k}^{-1}(H):=\pi_{1,k}\big(\pi_{2,k}^{-1}(H)\cap\Gamma_{E,k}\big).\]
Similarly, if $S$ is a smooth $(m,m)$ form on $X$, where $n+k-r-1\leq m\leq n$, then the push-forward 
\[(F_{E,k})_\star(S)=(\pi_{2,k})_\star\big(\pi_{1,k}^\star(S)\wedge[\Gamma_{E,k}]\big)\]
is a well defined current of bidegree $(m-n+r+1-k,m-n+r+1-k)$ on $(\P V)^k$ and it is given by a form with 
coefficients in $L^1((\P V)^k)$.

We endow $(\P V)^k$ with the K\"ahler form 
\begin{equation}\label{e:PVk-K}
\omega_k=c_k\sum_{j=1}^k\omega_\FS^j\,,\,\;\omega_\FS^j={\mathbf p}_j^\star\omega_\FS,
\end{equation}
where $\omega_\FS$ is the Fubini-Study form on $\P V$ and ${\mathbf p}_j:(\P V)^k\to\P V$ is the canonical projection on the $j$-th factor. Here the constant $c_k$ is chosen so that $\int_{(\P V)^k}\omega_k^{kN}=1$, so  $\omega_k^{kN}$ determines a probability measure $\Upsilon_k$ on $(\P V)^k$. We have 
\begin{equation}\label{e:PVk1}
\omega_k^{kN}=c_k^{kN}\,\frac{(kN)!}{(N!)^k}\,(\omega_\FS^1)^N\wedge\ldots\wedge(\omega_\FS^k)^N\,,\,\text{ so } c_k=\frac{(N!)^{1/N}}{((kN)!)^{1/kN}}\,.
\end{equation}
Using Stirling's formula we obtain that (see \cite[Lemma 3.2(i)]{CLMM})
\begin{equation}\label{e:ck}
c_k=\frac{(N!)^{1/N}}{((kN)!)^{1/kN}}\in\Big(\frac{1}{2ek}\,,\frac{2e}{k}\Big).
\end{equation}
We also have 
\begin{equation}\label{e:PVk2}
\begin{split}
\omega_k^{kN-1}&=c_k^{kN-1}\,\frac{(kN-1)!}{(N!)^{k-1}(N-1)!}\,
\sum_{j=1}^k(\omega_\FS^1)^N\wedge\ldots\wedge(\omega_\FS^j)^{N-1}\wedge\ldots
\wedge(\omega_\FS^k)^N\\
&=\frac{1}{kc_k}\,
\sum_{j=1}^k(\omega_\FS^1)^N\wedge\ldots\wedge(\omega_\FS^j)^{N-1}\wedge
\ldots\wedge(\omega_\FS^k)^N.
\end{split}
\end{equation}
%===
The top two intermediate degrees of $F_{E,k}$ are defined by 
\begin{align}\label{e:deg1}
\delta_1(F_{E,k})&=\int_XF_{E,k}^\star(\omega_k^{kN})
\wedge\omega^{n+k-r-1}=
\int_{(\P V)^k}\omega_k^{kN}\wedge(F_{E,k})_\star(\omega^{n+k-r-1}) \\
&=\int_{\Gamma_{E,k}}\pi_{1,k}^\star(\omega^{n+k-r-1})
\wedge\pi_{2,k}^\star(\omega_k^{kN})\,, \nonumber
\end{align}
\begin{align}\label{e:deg2}
\delta_2(F_{E,k})&=\int_XF_{E,k}^\star(\omega_k^{kN-1})
\wedge\omega^{n+k-r}=
\int_{(\P V)^k}\omega_k^{kN-1}\wedge(F_{E,k})_\star(\omega^{n+k-r}) \\
&=\int_{\Gamma_{E,k}}\pi_{1,k}^\star(\omega^{n+k-r})
\wedge\pi_{2,k}^\star(\omega_k^{kN-1})\,, \nonumber
\end{align}
(see \cite[(3.1)]{DS06}). We show next that they can be expressed 
in terms of certain Chern classes of $E$.

\begin{Theorem}\label{T:MTGdeg}
Let $(X,\omega),\,(E,h^E)$ verify assumptions (A)-(C), and assume that 
the Kodaira map $\Phi_E:X\to\G(r,V^\star)$ defined in \eqref{e:Kod2} 
is an embedding. If $1\leq k\leq r$ and $F_{E,k}$ is the meromorphic transform 
defined in \eqref{e:Gamma_k} then the following hold:
\begin{align}
F_{E,k}^\star(\delta_\bs)&=[D_k(\bs)], \text{ where } 
\bs\in(\P V)^k\setminus I_2(F_{E,k}),\label{e:MTp1}\\
F_{E,k}^\star(\omega_k^{kN})&=
\Phi_E^\star\big(c_{r+1-k}(\cT^\star,h^{\cT^\star})\big),\label{e:MTp2}\\
\delta_1(F_{E,k})&=\int_Xc_{r+1-k}(E)\wedge\omega^{n+k-r-1},\label{e:deg1c}\\
\delta_2(F_{E,k})&=\frac{1}{c_k}\int_Xc_{r-k}(E)\wedge
\omega^{n+k-r}.\label{e:deg2c}
\end{align}
\end{Theorem}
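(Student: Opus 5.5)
The four assertions are of increasing difficulty. I would prove them in order, each building on the previous one.

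\emph{Identity \eqref{e:MTp1}.} This follows directly from the definition \eqref{e:pullback0} with $H=\{\bs\}$, for $\bs\notin I_2(F_{E,k})$. By \eqref{e:Gamma_k}, $\pi_{2,k}^{-1}(\bs)\cap\Gamma_{E,k}=D_k(\bs)\times\{\bs\}$, which has pure dimension $n+k-r-1$. The projection $\pi_{1,k}$ maps this set isomorphically onto $D_k(\bs)$, so the push-forward of its integration current is $[D_k(\bs)]$. Since $F_{E,k}^\star([H])$ was defined through the current of integration on the (reduced) analytic set $(\pi_{2,k}|_{\Gamma_{E,k}})^{-1}(H)$, no multiplicity issues arise.

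\emph{Identity \eqref{e:MTp2}.} By \eqref{e:PVk1} and the choice of $c_k$, the form $\omega_k^{kN}=(\omega_\FS^1)^N\wedge\ldots\wedge(\omega_\FS^k)^N$ is exactly the product Fubini--Study measure $\mu_k$ of Theorem \ref{T:expdeg}. Fix a smooth test form $\phi$ on $X$. I would write
\[
\langle F_{E,k}^\star(\omega_k^{kN}),\phi\rangle=\int_{\Gamma_{E,k}}\pi_{1,k}^\star\phi\wedge\pi_{2,k}^\star\omega_k^{kN},
\]
and apply Fubini (the slicing/coarea formula) to the surjective map $\pi_{2,k}|_{\Gamma_{E,k}}$. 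This gives $\int_{(\P V)^k}\langle F_{E,k}^\star(\delta_\bs),\phi\rangle\,d\mu_k(\bs)$, where the analytic set $I_2(F_{E,k})$ and the non-generic fibers have $\mu_k$-measure zero. By \eqref{e:MTp1} this equals $\langle\E_k(\mu_k),\phi\rangle$, and Theorem \ref{T:expdeg} identifies it with $\Phi_E^\star c_{r+1-k}(\cT^\star,h^{\cT^\star})$.

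\emph{Degree formula \eqref{e:deg1c}.} Insert \eqref{e:MTp2} into \eqref{e:deg1}. By \eqref{e:isostar} we have $\Phi_E^\star\cT^\star\cong E$ holomorphically, so $\Phi_E^\star c_{r+1-k}(\cT^\star,h^{\cT^\star})$ represents the de Rham class $c_{r+1-k}(E)$. Because $\omega$ is closed (Kähler), the integral against $\omega^{n+k-r-1}$ depends only on this class.

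\emph{Degree formula \eqref{e:deg2c}.} This is the main obstacle. By \eqref{e:PVk2},
\[
\omega_k^{kN-1}=\frac{1}{kc_k}\sum_{j=1}^k\Theta_j,\qquad \Theta_j=(\omega_\FS^1)^N\wedge\ldots\wedge(\omega_\FS^j)^{N-1}\wedge\ldots\wedge(\omega_\FS^k)^N,
\]
so it suffices to show that $F_{E,k}^\star(\Theta_j)$ is cohomologous to $c_{r-k}(E)$ for each $j$.

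\begin{itemize}
\item I would use Crofton's formula: $\omega_\FS^{N-1}$ is the average, over the Grassmannian of projective lines $\ell=\P\Span(\sigma,\tau)\subset\P V$, of the currents $[\ell]$. Repeating the Fubini argument above, $F_{E,k}^\star(\Theta_j)$ becomes the average, over $s_i$ ($i\neq j$) and lines $\ell$, of $F_{E,k}^\star([\{s_i\}_{i\ne j}\times\ell])$.
\item This current is the push-forward to $X$ of the incidence set $\{(x,t)\in X\times\ell:\ x\in D_k(\ldots,t,\ldots)\}$. Its image is $\{x:\ \rank(s_i(x)_{i\neq j},\sigma(x),\tau(x))\leq k\}=D_{k+1}(s_1,\ldots,\sigma,\tau,\ldots,s_k)$, i.e.\ a degeneracy set of $k+1$ sections.
\item The key point is that for generic data the map from the incidence set onto this $D_{k+1}$ has degree one. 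At a generic point of $D_{k+1}$ the $k+1$ vectors have rank exactly $k$, with the $k-1$ vectors $s_i(x)$ independent, so the relation determines the point $t\in\ell$ uniquely. I would verify this using the fiber structure from the proof of Theorem \ref{T:MTD}, namely the description of $\Gamma_{E,k}\setminus(\Gamma_{E,k-1}\times\P V)$ as a projective bundle.
\item Granting degree one, the averaged current is the expectation current $\E_{k+1}$ for the Fubini--Study measure on $(\P V)^{k+1}$. For $k+1\leq r$ it equals $\Phi_E^\star c_{r-k}(\cT^\star,h^{\cT^\star})$ by Theorem \ref{T:expdeg}. For $k=r$ the image is all of $X$, the degree is still one, and we get $c_0=1$.
\item Alternatively, if the degree-one check is awkward, I would argue cohomologically: compute the pull-back class via the Grassmannian and the Porteous formula, which yields $c_{r-k}(E)$ directly.
\end{itemize}

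Summing the $k$ equal contributions and multiplying by $1/(kc_k)$ gives \eqref{e:deg2c}.
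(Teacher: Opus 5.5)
Your argument is correct, and for \eqref{e:MTp1} and \eqref{e:MTp2} it is the paper's. The two degree formulas are obtained differently.

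For \eqref{e:deg1c}, the paper does not go through \eqref{e:MTp2}. It uses that a generic $D_k(\bs)$ is Poincar\'e dual to $c_{r+1-k}(E)$, so $\int_{D_k(\bs)}\omega^{n+k-r-1}$ is a.e.\ constant, and then integrates \eqref{e:MTexp}. Your Chern--Weil argument via $\Phi_E^\star\cT^\star\cong E$ is equally valid; that isomorphism holds for any $E$ satisfying (C), by the same evaluation map.

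For \eqref{e:deg2c}, the paper picks, for each $j$, a single set $A_j=\{s^j_1\}\times\cdots\times L_j\times\cdots\times\{s^j_k\}$ with $A_j\cap I_2(F_{E,k})=\emptyset$. It replaces $\omega_k^{kN-1}$ by the cohomologous current $\frac{1}{kc_k}\sum_j[A_j]$. It then shows $F_{E,k}^\star([A_j])=[D_{k+1}(\bs_j,\sigma_j)]$ by the same unique-$[a:b]$ argument as your degree-one step (this is \eqref{e:bij}), and concludes by Poincar\'e duality of $D_{k+1}$ with $c_{r-k}(E)$. You instead average over all such sets via Crofton's formula, identify the average with $\E_{k+1}(\mu_{k+1})$, and apply Theorem \ref{T:expdeg} to $k+1$ sections.

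What your route buys is a current-level identity, $F_{E,k}^\star(\omega_k^{kN-1})=\frac{1}{c_k}\Phi_E^\star c_{r-k}(\cT^\star,h^{\cT^\star})$ for $k<r$ (and $=1/c_r$ for $k=r$). It also never needs that $\int_X F_{E,k}^\star(T)\wedge\omega^{n+k-r}$ depends only on the cohomology class of $T$ when $T$ is a singular current such as $[A_j]$.

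The price is bookkeeping you only sketch:
\begin{itemize}
\item the invariant probability measure on lines of $\P V$ is the push-forward of the product Fubini--Study measure under $(\sigma,\tau)\mapsto\P\Span(\sigma,\tau)$, by uniqueness of invariant probability measures on $\G(2,V)$;
\item a generic $A$ misses $I_2(F_{E,k})$, by a dimension count using $\codim I_2(F_{E,k})\geq 2$;
\item $F_{E,k}^\star$ can be interchanged with the average, a Fubini argument of the same type as in \eqref{e:MTexp}.
\end{itemize}
The paper's route is shorter and purely cohomological, needing only one good representative for each $j$.
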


\begin{proof} Note that $\pi_{1,k}:
\pi_{2,k}^{-1}(\bs)\cap\Gamma_{E,k}=D_k(\bs)\times\{\bs\}\to D_k(\bs)$
is biholomorphic, and if $\bs\in(\P V)^k\setminus I_2(F_{E,k})$ then $D_k(\bs)$ 
has pure dimension $n+k-r-1$. So  \eqref{e:MTp1} follows from these observations 
and from \eqref{e:pullback0}.

If $\phi$ is an $(n+k-r-1,n+k-r-1)$ form on $X$ then  
\[\big\langle F_{E,k}^\star(\omega_k^{kN}),\phi\big\rangle=
\int_{\Gamma_{E,k}}\pi_{1,k}^\star(\phi)\wedge\pi_{2,k}^\star(\omega_k^{kN})=
\int_{(\P V)^k}(F_{E,k})_\star(\phi)\wedge\omega_k^{kN},\]
and $(F_{E,k})_\star(\phi)\in L^1((\P V)^k)$. For $\bs\in(\P V)^k\setminus I_2(F_{E,k})$ we have 
\[(F_{E,k})_\star(\phi)(\bs)=(\pi_{2,k})_\star\big(\big(\pi_{1,k}|_{\Gamma_{E,k}}\big)^\star\phi\big)(\bs)=
\int_{\pi_{2,k}^{-1}(\bs)\cap\Gamma_{E,k}}\pi_{1,k}^\star\phi=\int_{D_k(\bs)}\phi.\]
Since $\omega_k^{kN}=\mu_k$ is the product measure on $(\P V)^k$
determined by the Fubini-Study volume $\omega_\FS^N$ on $\P V$ (see \eqref{e:PVk1}) we obtain
\begin{equation}\label{e:MTexp}
\big\langle F_{E,k}^\star(\omega_k^{kN}),\phi\big\rangle=
\int_{(\P V)^k}\langle[D_k(\bs)],\phi\rangle\,\omega_k^{kN}=
\langle \E_k[\mu_k],\phi \rangle,
\end{equation}
where $\E_k[\mu_k]$ is the expectation current from \eqref{e:expk}.
Hence \eqref{e:MTp2} follows from Theorem \ref{T:expdeg}. 
For generic $\bs\in(\P V)^k$ the Chern class $c_{r+1-k}(E)$ is the Poincar\'e dual of $D_k(\bs)$ \cite[p.\ 413]{GH94}. 
Since $\omega$ is a closed form, this implies that
\[\int_{D_k(\bs)}\omega^{n+k-r-1}=\int_Xc_{r+1-k}(E)\wedge\omega^{n+k-r-1},\]
and \eqref{e:deg1c} follows from \eqref{e:MTexp} applied with $\phi=\omega^{n+k-r-1}$. 

To prove \eqref{e:deg2c} we use a cohomological argument. Since $\codim I_2(F_{E,k})\geq2$, we can find, for each 
$1\leq j\leq k$, points 
\[\bs_j=(s_1^j,\ldots,s_k^j)\in(\P V)^k\setminus I_2(F_{E,k}),\;\sigma_j\in\P V\setminus\Span(s_1^j,\ldots,s_k^j),\]
with the following property: if $L_j=\{as_j^j+b\sigma_j\in\P V:\,[a:b]\in\P^1\}$ is the line in $\P V$ determined by 
$s_j^j,\sigma_j$, and $A_j=\{s_1^j\}\times\ldots\times L_j\times\ldots\times\{s_k^j\}$, then $A_j\cap I_2(F_{E,k})=\emptyset$.

By \eqref{e:PVk2} we have the cohomology of bidimension $(1,1)$ currents,
\begin{equation}\label{e:cohom}
\omega_k^{kN-1}\sim\frac{1}{kc_k}\,\sum_{j=1}^k[A_j],
\end{equation}
where $[A_j]$ denotes the current of integration along the analytic set $A_j$. 
The current $F_{E,k}^\star([A_j])$ is well defined since $A_j\cap I_2(F_{E,k})=\emptyset$. Note that 
\[F_{E,k}^\star([A_j])=(\pi_{1,k}|_{\Gamma_{E,k}})_\star[\pi_{2,k}^{-1}(A_j)\cap\Gamma_{E,k}]\,,\,\;\pi_{2,k}^{-1}(A_j)\cap\Gamma_{E,k}=\bigcup_{\bs\in A_j}\big(D_k(\bs)\times\{\bs\}\big).\]
Moreover,
\[\bigcup_{\bs\in A_j}D_k(\bs)=\bigcup_{[a:b]\in\P^1}D_k(s_1^j,\ldots,as_j^j+b\sigma_j,\ldots,s_k^j)=D_{k+1}(\bs_j,\sigma_j).\]
Let $\mathcal A_j=D_k(\bs_j)\cap D_k(s_1^j,\ldots,\sigma_j,\ldots,s_k^j)$. We claim that 
\begin{equation}\label{e:bij}
\pi_{1,k}:(\pi_{2,k}^{-1}(A_j)\cap\Gamma_{E,k})\setminus\pi_{1,k}^{-1}(\mathcal A_j)\to 
D_{k+1}(\bs_j,\sigma_j)\setminus \mathcal A_j
\end{equation}
is bijective. Indeed, for $[a:b]\neq[a':b']$ we have 
\[D_k(s_1^j,\ldots,as_j^j+b\sigma_j,\ldots,s_k^j)\cap D_k(s_1^j,\ldots,a's_j^j+b'\sigma_j,\ldots,s_k^j)=\mathcal A_j.\]
This implies that for every $x\in D_{k+1}(\bs_j,\sigma_j)\setminus\mathcal A_j$ there exists a unique $[a:b]\in\P V$ such that $x\in D_k(s_1^j,\ldots,as_j^j+b\sigma_j,\ldots,s_k^j)$, which proves our claim. 

We infer from \eqref{e:bij} that $F_{E,k}^\star([A_j])=[D_{k+1}(\bs_j,\sigma_j)]$
is a current of bidegree $(r-k,r-k)$ that belongs to the Chern class $c_{r-k}(E)$. Hence
\[\int_XF_{E,k}^\star([A_j])\wedge\omega^{n+k-r}=\int_Xc_{r-k}(E)\wedge\omega^{n+k-r},\]
and \eqref{e:deg2c} follows from \eqref{e:deg2} and \eqref{e:cohom}.
\end{proof}

We conclude this section by recalling certain notions from pluripotential theory on multiprojective spaces that will be needed for the proof of Theorem \ref{T:distrib}. Let us identify $\P V=\P^N$ and consider the multiprojective space $\P^{N,k}=(\P^N)^k$ endowed with the K\"ahler form $\omega_k$ defined in \eqref{e:PVk-K}, \eqref{e:PVk1}. We denote by $PSH(\P^{N,k},\omega_k)$ the class of $\omega_k$-plurisubharmonic functions. These are the functions $\varphi$ on $\P^{N,k}$ which are locally the sum of a plurisubharmonic (psh) function and a smooth one, such that 
$\omega_k+dd^c\varphi\geq0$ in the sense of currents. Let 
\[\mathcal P_{N,k}=\Big\{\varphi\in PSH(\P^{N,k},k\omega_k):\,\int_{\P^{N,k}}\varphi\,\omega_k^{kN}=0\Big\}.\]
The following quantities related to $k\omega_k$-psh functions were introduced in \cite{DS06}:
\begin{align}
R_{N,k}=R(\P^{N,k},\omega_k,\omega_k^{kN})&=\sup\Big\{\max_{\P^{N,k}}\varphi:\,\varphi\in\mathcal P_{N,k}\Big\}, \label{e:R}\\
\Delta_{N,k}(t)=\Delta(\P^{N,k},\omega_k,\omega_k^{kN},t)&= \sup\Big\{\int_{\{\varphi<-t\}}\omega_k^{kN}:\,
\varphi\in\mathcal P_{N,k}\Big\},\;t\in\R. 
\label{e:Delta}
\end{align}
Note that the constant $r(\P^{N,k},\omega_k)$ introduced in 
\cite[Proposition 2.2, (2.1)]{DS06} satisfies $r(\P^{N,k},\omega_k)=k$ 
(see \cite[Proposition A.9]{DS06}, \cite[Lemma 4.6]{CMN16}). 
This explains the considerations of $k\omega_k$-psh functions in the above definitions. 
It is shown in  \cite[Proposition A.9]{DS06} that there exist constants 
$c>0,\alpha>0,\zeta>0$ depending only on $k$ such that 
\begin{equation}\label{e:RDest}
R_{N,k}\leq c(1+\log N)\,,\,\;\Delta_{N,k}(t)
\leq cN^\zeta e^{-\alpha t},\;\forall\,t>0.
\end{equation}

%\section{Proof of Theorem \ref{T:distrib}}\label{S:distrib}
\section{Equidistribution of random degeneracy sets}\label{S:distrib}

In this Section, we give the proof of Theorem \ref{T:distrib} 
by using Theorem \ref{T:Tian} in conjunction 
with an equidistribution theorem of Dinh and Sibony for meromorphic 
transforms \cite{DS06}. We will apply their result 
to a sequence of meromorphic transforms associated to degeneracy sets 
defined as in \eqref{e:Gamma_k}.

\begin{Theorem}\label{T:distrib2}
Let $(X,\omega)$ be a compact K\"ahler manifold of dimension $n$, 
$(L,h^L)$ be a positive line bundle on $X$, and $(E,h^E)$ 
be a Hermitian holomorphic vector bundle of rank $r\leq n$ on $X$. 
Let $(\mathcal H_k,\Upsilon_k)$ be the probability space defined in \eqref{e:prob}. 
Then there exist $C>0$ and $p_0\in\N$ such that the following holds: 
For any $1\leq k\leq r$ and $\gamma>1$ there exist subsets 
$\mathcal{E}_{p,k}=\mathcal{E}_{p,k}(\gamma)\subset(\P V_p)^k$ such that for $p>p_0$ we have 

(i) $\Upsilon_{p,k}(\mathcal{E}_{p,k})\leq Cp^{-\gamma}$;

(ii) if $\bs_p\in(\P V_p)^k\setminus \mathcal{E}_{p,k}$ then
\[\Big|\frac{1}{p^{r+1-k}}\,\Big\langle[D_k(\bs_p)]-\Phi_p^\star(c_{r+1-k}(\cT^\star,h^{\cT^\star})),
\phi\Big\rangle\Big|\leq C\gamma\,\frac{\log p}{p}\,\|\phi\|_{\cC^2(X)},\]
for any $(n+k-r-1,n+k-r-1)$ form $\phi$ of class $\cC^2$ on $X$.
Moreover, the estimate (ii) holds for $\Upsilon_k$-a.e. sequence
 $\{\bs_p\}_{p\geq1}\in\mathcal H_k$ provided
that $p$ is large enough.
\end{Theorem}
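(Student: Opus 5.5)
We will apply the Dinh--Sibony equidistribution theorem for meromorphic transforms \cite[Theorem 4.1 / Corollary]{DS06} to the sequence $F_p:=F_{L^p\otimes E,k}$ from $X$ to $(\P V_p)^k=\P^{N_p,k}$, with $N_p=d_p$. First we check that for $p\ge p_0$ the bundle $L^p\otimes E$ satisfies (A)--(C) and that $\Phi_p$ is an embedding. Condition (A) is clear. For (C), global generation and the embedding property hold for $p$ large by \cite[Theorem 5.1.18]{MM07}, and $N_p\sim p^n\ge r$. For (B), every section of a globally generated bundle of rank $r\le n$ vanishes somewhere; indeed its zero set represents $c_r(L^p\otimes E)$, and $c_r(L^p\otimes E)\wedge\omega^{n-r}>0$ for $p$ large by Theorem \ref{T:Tian}. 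Hence Theorems \ref{T:MTD} and \ref{T:MTGdeg} apply: $F_p$ has codimension $n+k-r-1$, $F_p^\star(\delta_\bs)=[D_k(\bs)]$ off $I_2(F_p)$, and $F_p^\star(\omega_k^{kN_p})=\Phi_p^\star(c_{r+1-k}(\cT^\star,h^{\cT^\star}))$. Since $I_2(F_p)$ is a proper analytic set of measure zero, it can be put into $\mathcal E_{p,k}$.

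The Dinh--Sibony estimate states the following. For $\epsilon>0$ there is an exceptional set $E_p(\epsilon)$ with
\[
\Upsilon_{p,k}(E_p(\epsilon))\le \Delta_{N_p,k}\big(\eta_p(\epsilon)\big),\qquad \eta_p(\epsilon)=\epsilon\,\delta_1(F_p)\,\delta_2(F_p)^{-1}-3R_{N_p,k},
\]
up to the normalizing constant $r(\P^{N,k},\omega_k)=k$. Off this set we have
\[
\big|\langle F_p^\star(\delta_\bs)-F_p^\star(\omega_k^{kN_p}),\phi\rangle\big|\le C'\,\epsilon\,\delta_1(F_p)\,\|\phi\|_{\cC^2}
\]
(one uses the $\cC^2$ version, via $dd^c$-estimates on $X$). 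By \eqref{e:deg1c} and \eqref{e:deg2c}, together with $c_j(L^p\otimes E)=\binom{r}{j}p^j c_1(L)^j+O(p^{j-1})$ in cohomology, we get $\delta_1(F_p)\asymp p^{r+1-k}$ and $\delta_2(F_p)\asymp c_k^{-1}p^{r-k}$, with $c_k$ bounded above and below by \eqref{e:ck}. Thus $\delta_1/\delta_2\asymp p$.

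Now choose $\epsilon=\epsilon_p=M\gamma\,\frac{\log p}{p}$. Then $\eta_p\ge c_0M\gamma\log p-3c(1+\log N_p)$. Since $N_p\le Cp^n$, \eqref{e:RDest} gives
\[
\Upsilon_{p,k}(E_p)\le cN_p^\zeta e^{-\alpha\eta_p}\le C''p^{n\zeta+3c\alpha n}\,p^{-\alpha c_0M\gamma}.
\]
Taking $M$ large, depending only on $n,k,\zeta,\alpha,c$ and using $\gamma>1$, this is $\le Cp^{-\gamma}$. Off $\mathcal E_{p,k}:=E_p\cup I_2(F_p)$, dividing by $p^{r+1-k}$ and using $\delta_1\lesssim p^{r+1-k}$ yields (ii) with error $C\gamma\log p/p$. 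Constants are uniform in $k\le r$ since finitely many $k$ are involved.

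Finally, $\sum_p Cp^{-\gamma}<\infty$, so Borel--Cantelli on $(\mathcal H_k,\Upsilon_k)$ gives the almost sure statement. The main obstacle is checking precisely the form of the Dinh--Sibony inequality in \cite{DS06}: the roles of $\delta_1,\delta_2$, the factor $r=k$, and the passage from their $\mathrm{DSH}$/$\cC^2$ test forms of bidegree $(n+k-r-1,n+k-r-1)$. One must also ensure that the lower bounds on $\delta_1/\delta_2$ are uniform, using $c_1(L)^{n}>0$ so that the leading terms dominate.
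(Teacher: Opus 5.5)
Your proposal is correct and follows essentially the same route as the paper. You verify (A)--(C) for $L^p\otimes E$ and invoke Theorems \ref{T:MTD} and \ref{T:MTGdeg} to get $F_{p,k}^\star(\delta_{\bs})=[D_k(\bs)]$, $F_{p,k}^\star(\omega_{p,k}^{kd_p})=\Phi_p^\star(c_{r+1-k}(\cT^\star,h^{\cT^\star}))$ and $\delta_1/\delta_2\asymp p$. You then apply the Dinh--Sibony bound $\Delta_{p,k}(\varepsilon\delta_1/\delta_2-3R_{p,k})$ together with \eqref{e:RDest}, take $\varepsilon\approx\gamma\log p/p$, and conclude with Borel--Cantelli.

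The differences are cosmetic. You fix $\varepsilon=M\gamma\log p/p$ instead of solving for $\varepsilon$ exactly. You also derive (B) yourself rather than citing \cite[Theorem 6.1]{BCLM}. There your opening clause is false as stated, since trivial bundles are globally generated and have nowhere vanishing sections. The valid argument is that a nowhere vanishing section splits off a trivial line subbundle, which forces $c_r(L^p\otimes E)=0$ in cohomology and contradicts $\int_X c_r(L^p\otimes E)\wedge\omega^{n-r}>0$.
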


\begin{proof}
Recall the definition \eqref{e:Kod1} of the Kodaira maps $\Phi_p$. Since $(L,h^L)$ is positive 
we have, for all $p$ sufficiently large, that the vector bundles $L^p\otimes E$ verify (C) 
\cite[Theorem 5.1.15]{MM07} and that $\Phi_p$ is an embedding \cite[Theorem 5.1.18]{MM07}.
We showed in \cite{BCLM} that the vector bundles $L^p\otimes E$ 
also verify (B) (see \cite[Theorem 6.1]{BCLM} and its proof).

For $1\leq k\leq r$, we consider the meromorphic transforms $F_{p,k}$ from $X$ to $(\P V_p)^k$ defined as in \eqref{e:Gamma_k}, with graph
\begin{equation}\label{e:Gamma_pk}
\Gamma_{p,k}=\{(x,\bs_p)\in X\times(\P V_p)^k:\,x\in D_k(\bs_p)\}\subset X\times(\P V_p)^k.
\end{equation}
By Theorem \ref{T:MTD}, we have, for each $p$ sufficiently large, that $\Gamma_{p,k}$ is an irreducible analytic subset of $X\times(\P V_p)^k$ of dimension $kd_p+n+k-r-1$, and $F_{p,k}$ is a meromorphic transform of codimension $n+k-r-1$.  
%Let $\pi_{1,k}:X\times(\P V_p)^k\to X$, $\pi_{2,k}:X\times(\P V_p)^k\to(\P V_p)^k$ be the canonical projections.
Moreover, for $p$ large enough (see, e.g., \cite[(2.9)]{BCLM}),
\begin{equation}\label{e:dp}
d_p=\dim V_p-1= r\int_X \frac{c_1(L)^n}{n!} \;p^n +R_{n-1}(p),
\end{equation}
where $R_{n-1}(p)$ is a polynomial of degree $(n-1)$ in $p$. We endow $(\P V_p)^k$ with the K\"ahler form $\omega_{p,k}$ defined as in \eqref{e:PVk-K} with constant (see \eqref{e:PVk1}, \eqref{e:ck})
\begin{equation}\label{e:cpk}
c_{p,k}=\frac{(d_p!)^{1/{d_p}}}{((kd_p)!)^{1/kd_p}}\in\Big(\frac{1}{2ek}\,,\frac{2e}{k}\Big).
\end{equation}
Then $\omega_{p,k}^{kd_p}=\Upsilon_{p,k}$ is the probability measure on $(\P V_p)^k$ considered in Theorems \ref{T:distrib} and \ref{T:distrib2}.

Let 
\[R_{p,k}=R((\P V_p)^k,\omega_{p,k},\omega_{p,k}^{kd_p}),\;
\Delta_{p,k}(t)=\Delta((\P V_p)^k,\omega_{p,k},\omega_{p,k}^{kd_p},t),\;t\in\R, \]
be the pluripotential-theoretic quantities from \eqref{e:R}, \eqref{e:Delta}, corresponding to the K\"ahler manifold 
$((\P V_p)^k,\omega_{p,k})$. By \eqref{e:dp} we have $d_p\sim p^n$. Using \eqref{e:RDest}, we infer that there exist constants $C_1>0$, $p_0\in\N$, and $\alpha>0,\zeta>0$ depending only on $k$, such that
\begin{equation}\label{e:RDestp}
R_{p,k}\leq C_1(1+n\log p),\;\Delta_{p,k}(t)\leq C_1p^{n\zeta}e^{-\alpha t}, \text{ for }p>p_0,\;t>0.
\end{equation}

Recall that (see, e.g., \cite[Lemma 2.2]{BCLM})
\[c_j(L^{p}\otimes E,h^{L^{p}\otimes E})=p^j\binom{r}{j} c_1(L,h^L)^j+O(p^{j-1}).\]
Using this together with Theorem \ref{T:MTGdeg} and \eqref{e:cpk}, we obtain the asymptotics of the top intermediate degrees of $F_{p,k}$:
\begin{align}
\delta_1(F_{p,k})&=\int_Xc_{r+1-k}(L^{p}\otimes E,h^{L^{p}\otimes E})\wedge\omega^{n+k-r-1}\label{e:deg1cp}\\ 
&=p^{r+1-k}\binom{r}{k-1}\int_Xc_1(L,h)^{r+1-k}\wedge\omega^{n+k-r-1}+O(p^{r-k}),\nonumber\\
\delta_2(F_{p,k})&=\frac{1}{c_{p,k}}\int_Xc_{r-k}(L^{p}\otimes E,h^{L^{p}\otimes E})\wedge\omega^{n+k-r}\label{e:deg2cp}\\
&=\frac{p^{r-k}}{c_{p,k}}\,\binom{r}{k}\int_Xc_1(L,h)^{r-k}\wedge\omega^{n+k-r}+O(p^{r-k-1}).\nonumber
\end{align}
Increasing if necessary the constants $C_1,p_0$ and applying \eqref{e:cpk}, we get
\begin{equation}\label{e:ida}
C_1^{-1}p^{r+1-k}\leq\delta_1(F_{p,k})\leq C_1p^{r+1-k},\;
C_1^{-1}p\leq\frac{\delta_1(F_{p,k})}{\delta_2(F_{p,k})}\leq C_1p, \text{ for } p>p_0.
\end{equation}

For $\varepsilon>0$ let
\[\mathcal{E}'_{p,k}(\varepsilon):=\bigcup_{\|\phi\|_{\cC^2(X)}\leq 1}
\left\{\bs_p\in(\P V_p)^k:\,\left|\left\langle F_{p,k}^\star(\delta_{\bs_p}) - 
F_{p,k}^\star(\Upsilon_{p,k}),\phi\right\rangle\right| 
\geq \delta_1(F_{p,k})\varepsilon \right\},\]
\[\mathcal{E}''_{p,k}(\varepsilon):=\bigcup_{\|\phi\|_{\cC^2(X)}\leq 1}
\left\{\bs_p\in(\P V_p)^k:\,\left|\left\langle F_{p,k}^\star(\delta_{\bs_p}) - 
F_{p,k}^\star(\Upsilon_{p,k}),\phi\right\rangle\right|  
\geq C_1p^{r+1-k}\varepsilon \right\},\]
where $\phi$ is a $(n+k-r-1,n+k-r-1)$ form on $X$ of class $\cC^2$. 
Note that by \eqref{e:ida}, $\mathcal{E}''_{p,k}(\varepsilon)\subset\mathcal{E}'_{p,k}(\varepsilon)$. 
The Dinh-Sibony equidistribution theorem for meromorphic transforms \cite[Lemma 4.2\,(d)]{DS06}
states that, for all $\varepsilon>0$, we have 
\begin{equation}\label{e:est2}
\Upsilon_{p,k}(\mathcal{E}''_{p,k}(\varepsilon))\leq
\Upsilon_{p.k}(\mathcal{E}'_{p,k}(\varepsilon))\leq
\Delta_{p,k}(t_{\varepsilon,p,k}), \text{ where } t_{\varepsilon,p,k}=
\varepsilon\,\frac{\delta_1(F_{p,k})}{\delta_2(F_{p,k})}-3R_{p,k}.
\end{equation}
Using \eqref{e:ida} and \eqref{e:RDestp} we get, for all $p>p_0$, 
\[t_{\varepsilon,p,k}\geq\frac{\varepsilon p}{C_1}-3C_1(1+n\log p).\]
Combining this with the estimate on $\Delta_{p,k}$ from \eqref{e:RDestp}, we obtain from \eqref{e:est2} that 
\[\Upsilon_{p,k}(\mathcal{E}''_{p,k}(\varepsilon))\leq 
C_1p^{n\zeta}\exp\Big(-\frac{\alpha\varepsilon p}{C_1}+3\alpha C_1+3\alpha C_1n\log p\Big),\;\forall\,\varepsilon>0.\]

We fix now $\gamma>1$ and choose 
$\varepsilon=\varepsilon_{p,k,\gamma}$ such that
\[-\frac{\alpha\varepsilon p}{C_1}+3\alpha C_1+3\alpha C_1n\log p=-(n\zeta+\gamma)\log p.\]
Let $\mathcal{E}_{p,k}=\mathcal{E}_{p,k}(\gamma):=
\mathcal{E}''_{p,k}(\varepsilon_{p,k,\gamma})$. 
Then for all $p>p_0$ we have that
\[\Upsilon_{p,k}(\mathcal{E}_{p,k}(\gamma))\leq C_1p^{-\gamma}.\]
By \eqref{e:MTp2} we have 
\[F_{p,k}^\star(\Upsilon_{p,k})=F_{p,k}^\star(\omega_{p,k}^{kd_p})=\Phi_p^\star\big(c_{r+1-k}(\cT^\star,h^{\cT^\star})\big).\]
Moreover, by \eqref{e:MTp1}, $F_{p,k}^\star(\delta_{\bs_p})=[D_k(\bs_p)]$, for $\bs_p\in(\P V_p)^k\setminus I_2(F_{p,k})$. 
If $\bs_p\in(\P V_p)^k\setminus \mathcal{E}_{p,k}$ and 
$\phi$ is a $(n+k-r-1,n+k-r-1)$ form on $X$ 
of class $\cC^2$, we obtain by the definition of $\mathcal{E}_{p,k}$ that 
\[\Big|\frac{1}{p^{r+1-k}}\,\Big\langle[D_k(\bs_p)]-
\Phi_p^\star(c_{r+1-k}(\cT^\star,h^{\cT^\star})),\phi\Big\rangle\Big|
\leq C_1\varepsilon_{p,k,\gamma}\|\phi\|_{\cC^2(X)}.\]
Note that 
\[\varepsilon_{p,k,\gamma}=\frac{C_1}{\alpha p}
\big((3\alpha C_1n+n\zeta+\gamma)\log p+3\alpha C_1\big)\leq C_2\gamma\,\frac{\log p}{p}\]
holds for $p>p_0$, where $C_2>0$ is a constant independent of $p$ and $\gamma$. 
Assertion $(ii)$ of Theorem \ref{T:distrib2} now follows. Finally, the Borel-Cantelli lemma yields the last 
assertion of Theorem \ref{T:distrib2}, since $\sum_{p=1}^\infty\Upsilon_{p,k}(\mathcal{E}_{p,k})<\infty$.
\end{proof}

\begin{proof}[Proof of Theorem \ref{T:distrib}]
Theorem \ref{T:distrib} follows at once from Theorems \ref{T:distrib2} and \ref{T:Tian}.
\end{proof}

%======= 08 march 2025
%\begin{Remark}\label{Rk:6.2}
%Note that in Theorem \ref{T:zeros}, we may always replace $(\P V_p,\Upsilon_p)$ by $(V_p, \mathcal{G}_p)$, where $\mathcal{G}_p$ denotes the Gaussian probability measure on $V_p$ induced by the $L^2$-inner product. Indeed, if $\pi:V_p\setminus\{0\}\to\P V_p$ is the canonical projection and $\wi{\mathcal{E}_p}=\pi^{-1}(\mathcal{E}_p)$ then $\mathcal{G}_p(\wi{\mathcal{E}_p})=\Upsilon_p(\mathcal{E}_p)$, so Theorem \ref{T:zeros} holds for the sets $\wi{\mathcal{E}_p}$.
%This follows from a simple observation that, via the canonical projection $V_p\setminus\{0\}\rightarrow \P V_p$, the random $(r,r)$-current $[s_p=0]$ modeled by $(\P V_p,\Upsilon_p)$ has the same distribution as when modeled by $(V_p, \mathcal{G}_p)$.
%\end{Remark}

%June 23rd 2026
\section{Wishart distribution and the determinant case}\label{S:Wick}
In this final section, we discuss a complementary proof of the equidistribution result in
the special case \(k=r\). In this case, the degeneracy locus is a hypersurface: it is the
zero divisor of the wedge product
\(
s^p_{1}\wedge\cdots\wedge s^p_{r}\in H^0(X,L^{pr}\otimes \det E).
\)
Thus, the problem can be reduced to the study of random holomorphic sections of the determinant
line bundle \(L^{pr}\otimes\det E\). For Gaussian sections, the pointwise norm of the section $s^p_{1}\wedge\cdots\wedge s^p_{r}$ can be calculated explicitly with the help of the complex Wishart distribution. This
gives a local probabilistic route to the convergence of the normalized currents
\(
\frac1p[D_{r}(s^p_{1},\cdots, s^p_{r})]
\)
towards \(rc_1(L,h^L)\). 
Although this argument only treats the determinant case, it is useful because it gives a
direct local proof based on the pointwise Gaussian covariance 
\(P_p(x)\) and provides a
comparison with the general meromorphic-transform method used above.

We start by reviewing several fundamental results about
the complex Wishart and chi‑square distributions.
Let $\xi^\top=(Z_1, Z_2, \ldots, Z_r)$ be a $r$-tuple 
of symmetric complex Gaussian variables with mean zero. 
Set the covariance matrix
\begin{equation}
\Sigma_{\xi}=(\Sigma_{\xi,ij})_{1\leq i,j\leq r} :=\E[\xi\ov{\xi}^\top]\,,\quad \Sigma_{\xi,ij}:=\E[Z_i\ov{Z}_j].
\end{equation}
We always assume that $\Sigma_{\xi}$ is nonsingular and define the generalized variance as
\begin{equation}
\sigma_{\xi}:=\det \Sigma_{\xi} >0.
\end{equation}

\begin{Definition}[Complex Wishart distribution (see \cite{Goodman_1963a, Goodman_1963b, MR0652932})]\label{D:Wishart}
Fix $m\in\N_{\geq 1}$ such that $m\geq r$, let $\xi_1,\ldots,\xi_m$ 
denote $m$ independent and identically distributed $r$-tuples of 
mean-zero symmetric complex Gaussian variables with Hermitian 
covariance matrix $\Sigma_\xi$ as above. A random matrix of 
complex Wishart distribution is defined as
\begin{equation}
\widehat{\boldsymbol{\Sigma}}_\xi:=\frac{1}{m}\sum_{j=1}^m \xi_j\ov{\xi}_j^\top.
\label{e:5.3Wishart}
\end{equation}
\end{Definition}

\begin{Definition}
Let $N_1, N_2, \dots, N_k$ be independent standard real 
Gaussian variables, i.e., $N_j \sim \mathcal{N}_\R(0,1)$ 
for $j=1,\dots,k$. Then the random variable
\[
\eta = \sum_{j=1}^{k} N_j^2
\]
is said to have a \emph{chi‑square distribution} 
with $k$ degrees of freedom, denoted by
\[
\eta \sim \chi^2(k).
\]
Note that we have $\mathbb{E}[X] = k$ and $\operatorname{Var}[X] = 2k$.
\end{Definition}

A key feature of the complex Wishart distribution is 
Bartlett’s decomposition theorem, which can be derived by applying 
the QR factorization to matrices.
%===
\begin{Theorem}[{\cite{W34, Goodman_1963a, Goodman_1963b}}] \label{T:Goodman}
Let $\widehat{\boldsymbol{\Sigma}}_\xi$ be the random variable as in \eqref{e:5.3Wishart}, then the random variable
$$(2m)^r\frac{\det \widehat{\boldsymbol{\Sigma}}_\xi}{\sigma_\xi}$$
has the same distribution as the product of $r$ independent chi-square random variables with $2m$, $2(m-1)$, $\cdots$, $2(m-r+1)$ degrees of freedom.
\end{Theorem}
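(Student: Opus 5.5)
The plan is to reduce to the case $\Sigma_\xi=\Id_r$ by a linear change of variables, and then compute $\det\widehat{\boldsymbol{\Sigma}}_\xi$ through a Gram--Schmidt (QR) decomposition of the $r\times m$ data matrix. I use the normalization implicit in the statement: a symmetric complex Gaussian $Z$ with $\E|Z|^2=1$ has independent real and imaginary parts, each $\mathcal N_\R(0,\tfrac12)$, so that $2|Z|^2\sim\chi^2(2)$.

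\emph{Step 1 (whitening).} Since $\Sigma_\xi$ is Hermitian positive definite, write $\xi_j=\Sigma_\xi^{1/2}\zeta_j$. The $\zeta_j$ are i.i.d.\ symmetric complex Gaussian vectors with covariance $\Id_r$, so all $rm$ entries $\zeta_{ij}$ are i.i.d.\ standard complex Gaussians. Let $G$ be the $r\times m$ matrix with columns $\zeta_1,\ldots,\zeta_m$. Then
\[
\widehat{\boldsymbol{\Sigma}}_\xi=\tfrac1m\,\Sigma_\xi^{1/2}\,GG^{*}\,\Sigma_\xi^{1/2},
\]
and therefore
\[
(2m)^r\,\frac{\det\widehat{\boldsymbol{\Sigma}}_\xi}{\sigma_\xi}=2^r\det(GG^{*}).
\]
This quantity no longer depends on $\Sigma_\xi$.

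\emph{Step 2 (Gram determinant as a product).} Let $g_1,\ldots,g_r\in\C^m$ be the rows of $G$, and let $\Pi_i$ be the orthogonal projection onto $\Span(g_1,\ldots,g_{i-1})^{\perp}$, with $\Pi_1=\Id$. The matrix $GG^*$ is the Gram matrix of the rows. By Gram--Schmidt, i.e.\ the QR factorization $G=TQ$ with $T$ lower triangular and $Q$ having orthonormal rows, we get
\[
\det(GG^{*})=|\det T|^2=\prod_{i=1}^r\|\Pi_i g_i\|^2 .
\]
Almost surely the rows are linearly independent because $m\geq r$, so $\Pi_i$ projects onto a subspace of complex dimension exactly $m-i+1$.

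\emph{Step 3 (conditional distributions and independence).} This is the main step. Condition on $g_1,\ldots,g_{i-1}$. The row $g_i$ is independent of them and is a standard complex Gaussian vector in $\C^m$. By unitary invariance of this law, $\Pi_i g_i$ is a standard complex Gaussian vector in an $(m-i+1)$-dimensional complex subspace, so
\[
2\|\Pi_i g_i\|^2\sim\chi^2\big(2(m-i+1)\big).
\]
Crucially, this conditional law does not depend on the conditioning variables $g_1,\ldots,g_{i-1}$. Let $\eta_i=2\|\Pi_i g_i\|^2$, which is a function of $(g_1,\ldots,g_i)$. Then for bounded test functions $f_1,\ldots,f_r$, an induction from $i=r$ down to $i=1$, taking conditional expectations successively, yields
\[
\E\Big[\prod_{i=1}^r f_i(\eta_i)\Big]=\prod_{i=1}^r\E\big[f_i(\chi^2(2(m-i+1)))\big].
\]
This shows that the $\eta_i$ are mutually independent with the stated laws.

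Combining Steps 1--3 gives
\[
(2m)^r\,\frac{\det\widehat{\boldsymbol{\Sigma}}_\xi}{\sigma_\xi}=\prod_{i=1}^r\eta_i ,
\]
which is the assertion of the theorem. The only delicate points are the independence argument in Step 3 and keeping the factor $2$ convention for complex Gaussians consistent throughout.
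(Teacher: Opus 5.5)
Your argument is correct, and it is the QR-factorization (Bartlett decomposition) route the paper points to; the paper gives no proof of its own and only cites Wishart and Goodman, remarking that the result follows from the QR factorization. Your whitening step, the Gram--Schmidt identity $\det(GG^{*})=\prod_{i}\|\Pi_i g_i\|^2$, and the conditional unitary-invariance argument for independence fill in that remark. Your normalization $2|Z|^2\sim\chi^2(2)$ is also consistent with the constants the paper uses later in the Wick--Gram identity.
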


%For chi‑square distribution, we have $\mathbb{E}[X] = k$ and $\operatorname{Var}(X) = 2k$.
%\begin{itemize}
%    \item Mean: $\mathbb{E}[X] = k$.
%    \item Variance: $\operatorname{Var}(X) = 2k$.
%    \item Moment generating function: $M_X(t) = (1 - 2t)^{-k/2}$ for $t < 1/2$.
%    \item Additivity: If $X_1 \sim \chi^2(k_1)$ and $X_2 \sim \chi^2(k_2)$ are independent, then $X_1 + X_2 \sim \chi^2(k_1 + k_2)$.
 %   \item Asymptotic normality: As $k \to \infty$, $(X - k)/\sqrt{2k} \xrightarrow{d} \mathcal{N}(0,1)$.
%\end{itemize}

We have  the following key result deduced from Theorem \ref{T:Goodman}, where the expectation formula \eqref{e:WG2}  is standard in the literature and can also be proved in a more elementary way without using Theorem \ref{T:Goodman}.
\begin{Corollary}[Wick--Gram identity]
\label{Cor:WG}
Let $(H,\langle\cdot,\cdot\rangle)$ be a Hermitian vector space of dimension $r$.
Let $v_1,\dots,v_r$ be i.i.d.\ centered (symmetric) complex Gaussian vectors in $H$ with a covariance operator
\[
\Sigma:=\mathbb E\bigl[v_1\otimes v_1^{*}\bigr]\in \End(H),
\qquad\text{i.e.}\quad
\mathbb E\langle u,v_1\rangle\,\overline{\langle w,v_1\rangle}=\langle u,\Sigma w\rangle.
\]
%Let $\det(\Sigma)$ is computed in any orthonormal basis of $H$.
Then there exist $r$ independent chi-square variables $Y_1$, $\cdots$, $Y_r$ with $Y_j\sim \chi^2(2j)$, $j=1,2,\cdots,r$, such that the random variable $\|v_1\wedge\cdots\wedge v_r\|^2$ has the same distribution as
\begin{equation}
\frac{\det \Sigma}{2^r}\Pi_{j=1}^r Y_j.
\label{e:WG1}
\end{equation}
In particular, we have
\begin{equation}
\mathbb E\left[\|v_1\wedge\cdots\wedge v_r\|^2\right]=r!\,\det \Sigma.
\label{e:WG2}
\end{equation}
\end{Corollary}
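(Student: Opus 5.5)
The plan is to reduce the corollary to Theorem \ref{T:Goodman} by choosing coordinates and taking a single Gram determinant. First fix an orthonormal basis $e_1,\dots,e_r$ of $H$. Write $v_j=\sum_i Z_{ij}e_i$, so that $\xi_j:=(Z_{1j},\dots,Z_{rj})^\top$ are i.i.d.\ centered symmetric complex Gaussian $r$-tuples. Their covariance matrix $\Sigma_\xi$ is the matrix of $\Sigma$ in this basis (up to the convention on conjugation, which does not affect the determinant), so $\sigma_\xi=\det\Sigma$.

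Let $M=[\xi_1|\cdots|\xi_r]$. Then $v_1\wedge\cdots\wedge v_r=\det(M)\,e_1\wedge\cdots\wedge e_r$, and hence
\[
\|v_1\wedge\cdots\wedge v_r\|^2=|\det M|^2=\det(M\ov{M}^\top)=\det\Big(\sum_{j=1}^r\xi_j\ov{\xi}_j^\top\Big)=r^r\det\widehat{\boldsymbol{\Sigma}}_\xi ,
\]
where $\widehat{\boldsymbol{\Sigma}}_\xi$ is the Wishart matrix of Definition \ref{D:Wishart} with $m=r$. If $\Sigma$ is singular, both sides of \eqref{e:WG1} vanish almost surely, so I will assume $\Sigma$ is nonsingular.

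Applying Theorem \ref{T:Goodman} with $m=r$ shows that $(2r)^r\det\widehat{\boldsymbol{\Sigma}}_\xi/\sigma_\xi$ has the same law as $\prod_j Y_j$, with independent $Y_j\sim\chi^2(2j)$ for $j=1,\dots,r$ (the degrees of freedom $2r,2(r-1),\dots,2$ are just a relabeling). Consequently
\[
\|v_1\wedge\cdots\wedge v_r\|^2\stackrel{d}{=}\frac{r^r\sigma_\xi}{(2r)^r}\prod_j Y_j=\frac{\det\Sigma}{2^r}\prod_j Y_j ,
\]
which is \eqref{e:WG1}.

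For \eqref{e:WG2}, independence gives $\E\prod_j Y_j=\prod_j 2j=2^r r!$, and therefore the expectation equals $r!\det\Sigma$.

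The main obstacle is matching normalizations: the convention in Theorem \ref{T:Goodman} (the factor $2m$ and the degrees of freedom) and the convention for the covariance $\E[Z_i\ov Z_j]$ against $\langle u,\Sigma w\rangle$. I would sanity-check the constants with $r=1$: there $|Z|^2$ with $\E|Z|^2=\Sigma$ equals $\frac{\Sigma}{2}\chi^2(2)$, which is consistent. As an independent check on \eqref{e:WG2}, expand $|\det M|^2$ via the Leibniz formula and apply Wick's theorem. By independence of the columns, only pairs of equal permutations survive, which yields $\sum_{\sigma}\E\prod_j |Z_{\sigma(j)j}|^2$-type terms that assemble to $r!\det\Sigma$ (the permanent–determinant cancellation comes from the cross terms).
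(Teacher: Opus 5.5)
Your proof is correct and follows the paper's route: fix an orthonormal basis, write $\|v_1\wedge\cdots\wedge v_r\|^2=|\det M|^2=\det\bigl(\sum_j\xi_j\ov{\xi}_j^\top\bigr)$, and apply Theorem \ref{T:Goodman} with $m=r$; you are in fact more careful than the paper in tracking the factor $r^r$ against $(2r)^r$ and in handling singular $\Sigma$. The only slip is in your optional Leibniz check of \eqref{e:WG2}: it is true that only pairs $\sigma=\tau$ survive, but only after a unitary change of basis that diagonalizes $\Sigma$, in which case no cross terms arise at all; in a general basis every pair $(\sigma,\tau)$ contributes $\mathrm{sgn}(\pi)\prod_k\Sigma_{k,\pi(k)}$ with $\pi=\tau\sigma^{-1}$, and summing over $\sigma$ yields $r!\det\Sigma$.
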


\begin{proof}
Fix an orthonormal basis $(e_1,\dots,e_r)$ of $H$ and write
$v_j=\sum_{\ell=1}^r \xi_{j\ell}e_\ell$ with complex Gaussian coefficients.
Then
\[
v_1\wedge\cdots\wedge v_r=\det(\Xi)\,e_1\wedge\cdots\wedge e_r,
\qquad
\Xi^\top:=(\xi_{j\ell})_{1\le j,\ell\le r},
\]
hence $\|v_1\wedge\cdots\wedge v_r\|^2=|\det(\Xi)|^2$.

Let $\xi_j=(\xi_{j1},\ldots,\xi_{jr})^\top$ denote the vertical Gaussian vector defined by $v_j$. Then the square matrix $\Xi$ is given as $r$ independent columns of Gaussian vectors $\xi_1, \ldots, \xi_r$ with the same probability distribution. A direct calculation shows
\begin{equation}
\begin{split}
\|v_1\wedge\cdots\wedge v_r\|^2 &=\det(\Xi\Xi^\star) \\
&=\det \begin{bmatrix} \xi_1 & \ldots & \xi_r \end{bmatrix} \begin{bmatrix} \ov{\xi}^\top_1 \\ \vdots \\  \ov{\xi}^\top_r\end{bmatrix}\\
&=\det\left(\sum_{j=1}^r \xi_j\ov{\xi}^\top_j\right)
\end{split}
\end{equation}

By Definition \ref{D:Wishart}, we conclude that $\frac{1}{r}\sum_{j=1}^r \xi_j\ov{\xi}^\top_j$ admits a complex Wishart distribution associated with the covariance $\Sigma$. Then \eqref{e:WG1} follows directly from Theorem \ref{T:Goodman}.

Since $Y_j\sim \chi^2(2j)$, we have $\E[Y_j]=2j$. The independence of $Y_j$ ($j=1,\ldots, r$) and \eqref{e:WG1} imply that 
\begin{equation}
\mathbb E\left[\|v_1\wedge\cdots\wedge v_r\|^2\right]=\frac{\det \Sigma}{2^r}\Pi_{j=1}^r \E[Y_j].
\label{e:WG3}
\end{equation}
Then we obtain \eqref{e:WG2}.

%Moreover, $\mathbb E(\xi_{j\ell}\overline{\xi_{jm}})=\Sigma_{\ell m}$ and the rows of $\Xi$ are independent.
%Expanding $|\det(\Xi)|^2$ as
%\[
%|\det(\Xi)|^2=\sum_{\sigma,\tau\in S_r}sgn(\sigma)sgn(\tau)
%\prod_{j=1}^r \xi_{j,\sigma(j)}\,\overline{\xi_{j,\tau(j)}},
%\]
%independence across $j$ gives
%\[
%\mathbb E|\det(\Xi)|^2=\sum_{\sigma,\tau\in S_r}sgn(\sigma)sgn(\tau)
%\prod_{j=1}^r \mathbb E\!\left(\xi_{j,\sigma(j)}\,\overline{\xi_{j,\tau(j)}}\right)
%=\sum_{\sigma,\tau\in S_r}sgn(\sigma)sgn(\tau)\prod_{j=1}^r \Sigma_{\sigma(j),\tau(j)}.
%\]
%With $\pi:=\tau\circ\sigma^{-1}$ one has
%\[
%\prod_{j=1}^r \Sigma_{\sigma(j),\tau(j)}=\prod_{k=1}^r \Sigma_{k,\pi(k)},
%\qquad
%sgn(\sigma)sgn(\tau)=sgn(\pi),
%\]
%so summing over $\sigma$ yields the factor $r!$ and
%\[
%\mathbb E|\det(\Xi)|^2
%=r!\sum_{\pi\in S_r}sgn(\pi)\prod_{k=1}^r \Sigma_{k,\pi(k)}
%=r!\,\det(\Sigma).
%\]
\end{proof}

%\begin{proof}
%Fix an orthonormal basis $(e_1,\dots,e_r)$ of $H$ and write
%$v_j=\sum_{\ell=1}^r \xi_{j\ell}e_\ell$ with complex Gaussian coefficients.
%Then
%\[
%v_1\wedge\cdots\wedge v_r=\det(\Xi)\,e_1\wedge\cdots\wedge e_r,
%\qquad
%\Xi:=(\xi_{j\ell})_{1\le j,\ell\le r},
%\]
%hence $\|v_1\wedge\cdots\wedge v_r\|^2=|\det(\Xi)|^2$.
%Moreover, $\mathbb E(\xi_{j\ell}\overline{\xi_{jm}})=\Sigma_{\ell m}$ and the rows of $\Xi$ are independent.
%Expanding $|\det(\Xi)|^2$ as
%\[
%|\det(\Xi)|^2=\sum_{\sigma,\tau\in S_r}sgn(\sigma)sgn(\tau)
%\prod_{j=1}^r \xi_{j,\sigma(j)}\,\overline{\xi_{j,\tau(j)}},
%\]
%independence across $j$ gives
%\[
%\mathbb E|\det(\Xi)|^2=\sum_{\sigma,\tau\in S_r}sgn(\sigma)sgn(\tau)
%\prod_{j=1}^r \mathbb E\!\left(\xi_{j,\sigma(j)}\,\overline{\xi_{j,\tau(j)}}\right)
%=\sum_{\sigma,\tau\in S_r}sgn(\sigma)sgn(\tau)\prod_{j=1}^r \Sigma_{\sigma(j),\tau(j)}.
%\]
%With $\pi:=\tau\circ\sigma^{-1}$ one has
%\[
%\prod_{j=1}^r \Sigma_{\sigma(j),\tau(j)}=\prod_{k=1}^r \Sigma_{k,\pi(k)},
%\qquad
%sgn(\sigma)sgn(\tau)=sgn(\pi),
%\]
%so summing over $\sigma$ yields the factor $r!$ and
%\[
%\mathbb E|\det(\Xi)|^2
%=r!\sum_{\pi\in S_r}sgn(\pi)\prod_{k=1}^r \Sigma_{k,\pi(k)}
%=r!\,\det(\Sigma).
%\]
%\end{proof}

Let $\nu^p_r$ denote the product of Gaussian probability measures on $H^0(X,L^p\otimes E)^r$ induced by the $L^2$-inner product. Take the Gaussian vector 
\begin{equation}
\bs_p=(s^p_1,\ldots, s^p_r)\in (H^0(X,L^p\otimes E)^r,\nu^p_r),
\label{e:random5}
\end{equation}
and, for sufficiently large $p$, we consider the random $(1,1)$-current $[D_r(\bs_p)]$ on $X$. We view $s^p_{1}\wedge\cdots\wedge s^p_{r}$ as a holomorphic section of the holomorphic line bundle $L^{pr}\otimes \det E$, by the Poincar\'{e}-Lelong formula, we have
\begin{equation}
    [D_r(\bs_p)]=p\, r c_1(L,h^L)+c_1(E,h^E)+\frac{\sqrt{-1}}{2\pi}\partial\ov{\partial}\log \|s^p_1\wedge\cdots\wedge s^p_r\|^2_{h^{pr}\otimes h^{\det E}}(x),
    \label{e:PL}
\end{equation}
where $c_1(E,h^E)=c_1(\det E, h^{\det E})$.
Therefore, it is important to understand the properties of the random variable $\log \|s^p_1\wedge\cdots\wedge s^p_r\|^2_{h^{pr}\otimes h^{\det E}}(x)$.

Recall that when $p$ is sufficiently large, for every $x \in X$, the map $P_p(x) \in \End(E_x)$ is a Hermitian, positive-definite endomorphism of the Hermitian vector space $(E_x, h_x^E)$.
%===
\begin{Proposition}[Pointwise characterization of determinant section]\label{P:WG-section}
We consider the random section $\bs_p=(s^p_1,\ldots, s^p_r) \in H^0(X,L^p\otimes E)^r $ as in \eqref{e:random5}. For any $x\in X$, the random variable
$\|s^p_1\wedge\cdots\wedge s^p_r\|^2_{h^{pr}\otimes h^{\det E}}(x)$ admits the decomposition
\begin{equation}
\frac{1}{2^r}\det P_p(x) \prod_{\ell=1}^r \chi^2(2\ell),
\label{e:5.9decomp}
\end{equation}
where the chi-square random variables are independent of each other for any given $x\in X$.
Moreover, we have
\begin{equation}
\E[\log \|s^p_1\wedge\cdots\wedge s^p_r\|^2_{h^{pr}\otimes h^{\det E}}(x)]=\log \det P_p(x) +\sum_{\ell=1}^r \psi(\ell),
\label{e:5.9exp}
\end{equation}
where $\psi(z):=\frac{\Gamma'(z)}{\Gamma(z)}$ denotes the digamma function, and $\psi(1)=-\gamma$ with $\gamma$ being the Euler–Mascheroni constant; for integer $\ell\geq 2$,
$\psi(\ell)=-\gamma+\sum_{m=1}^{\ell-1}\frac{1}{m}$.
\end{Proposition}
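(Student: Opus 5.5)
The plan is to reduce the statement to Corollary \ref{Cor:WG}, applied pointwise, and then to compute the expectation of the logarithm of a chi-square variable. Fix $x\in X$ and an orthonormal basis $\{S^p_j\}_{j=0}^{d_p}$ of $V_p=H^0(X,L^p\otimes E)$ for the inner product \eqref{e:ip1}. Under $\nu^p_r$ each $s^p_i$ can be written as $s^p_i=\sum_j a_{ij}S^p_j$, where the $a_{ij}$ are i.i.d.\ standard complex Gaussians. We evaluate at $x$ and trivialize $L^p_x$ by a unit vector; this changes neither norms nor wedge norms, since $h^{pr}\otimes h^{\det E}$ is the metric induced on $\Lambda^r(L^p\otimes E)_x=L^{pr}_x\otimes\det E_x$. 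The vectors $v_i:=s^p_i(x)\in H:=(E_x,h^E_x)$ are then i.i.d.\ centered complex Gaussian vectors. Their covariance is
\[
\mathbb E[v_1\otimes v_1^*]=\sum_{j}S^p_j(x)\otimes S^p_j(x)^*=P_p(x),
\]
by \eqref{bk2.5}. We use the normalization in which a standard complex Gaussian has $\E|a|^2=1$, which matches the convention of Corollary \ref{Cor:WG}.

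Applying Corollary \ref{Cor:WG} with $\Sigma=P_p(x)$, which is nonsingular for $p$ large, gives exactly the decomposition \eqref{e:5.9decomp}. The independent chi-square factors come from Theorem \ref{T:Goodman} with $m=r$. The degrees of freedom $2r,2(r-1),\dots,2$ are, after relabeling, $\chi^2(2\ell)$ for $\ell=1,\dots,r$.

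For \eqref{e:5.9exp}, take logarithms: $\log\|\cdot\|^2=\log\det P_p(x)-r\log 2+\sum_\ell\log Y_\ell$ with $Y_\ell\sim\chi^2(2\ell)$. By linearity of expectation (independence is not even needed here), it remains to compute $\E[\log Y]$ for $Y\sim\chi^2(2\ell)$. Its density is that of a Gamma$(\ell,2)$ law, i.e.\ $Y/2\sim\mathrm{Gamma}(\ell,1)$. Then
\[
\E[\log(Y/2)]=\frac{1}{\Gamma(\ell)}\int_0^\infty \log t\, t^{\ell-1}e^{-t}\,dt=\frac{\Gamma'(\ell)}{\Gamma(\ell)}=\psi(\ell),
\]
by differentiating $\Gamma(z)=\int_0^\infty t^{z-1}e^{-t}dt$ under the integral sign. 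Hence $\E[\log Y_\ell]=\log 2+\psi(\ell)$. The $r\log 2$ terms cancel the $-r\log 2$, which yields \eqref{e:5.9exp}. The stated values of $\psi(\ell)$ follow from $\psi(z+1)=\psi(z)+1/z$, obtained by differentiating $\log\Gamma(z+1)=\log z+\log\Gamma(z)$, together with $\psi(1)=\Gamma'(1)=-\gamma$.

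The only delicate point is the first step: we must identify the pointwise covariance with $P_p(x)$, checking the Hermitian convention $\E\langle u,v_1\rangle\overline{\langle w,v_1\rangle}=\langle u,\Sigma w\rangle$ against \eqref{bk2.5}, and confirm that the Gaussian normalization of $\nu^p_r$ matches the one in Corollary \ref{Cor:WG} and Theorem \ref{T:Goodman}. Otherwise a constant factor (a power of $2$) would shift the additive constant in \eqref{e:5.9exp}. We also need $\log Y_\ell$ to be integrable, which holds because $t^{\ell-1}\log t$ is integrable near $0$.
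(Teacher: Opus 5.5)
Your proposal is correct and follows essentially the same route as the paper. Both arguments identify the pointwise covariance of $s^p_1(x),\dots,s^p_r(x)$ with $P_p(x)$, using an orthonormal basis of $V_p$, standard complex Gaussian coefficients and orthonormal frames at $x$; both then invoke Corollary \ref{Cor:WG} (Theorem \ref{T:Goodman} with $m=r$) and conclude with $\E[\log\chi^2(2\ell)]=\log 2+\psi(\ell)$. The only difference is that you derive this last identity from $Y/2\sim\mathrm{Gamma}(\ell,1)$ and make the $2^{-r}$ cancellation explicit, whereas the paper simply quotes the identity.
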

\begin{proof}
Actually, the proof can proceed at a given point $x$, but we will work on an open neighborhood of the given $x$ in order to use the same arguments when we calculate the differentials of the expectation on the left-hand side of \eqref{e:5.9exp}.

Let $\{S^p_j\}_{j=1}^{d_p+1}$ be an orthonormal basis of $H^0(X, L^p\otimes E)$. 
We fix a point $x$, and in a small open neighborhood $U_x$ of $x$, we fix 
a local smooth unit frame $e_L$ of $L$, and a local orthonormal smooth frame 
$\{v_1,\ldots v_r\}$ of $(E,h^E)$ (which are not necessarily holomorphic). 
Then the frame $\{e_L^{\otimes p}\otimes v_1, \ldots, e_L^{\otimes p}\otimes v_r\}$ 
is a local orthonormal basis of $(L^p\otimes E)_y$ for any point $y\in U_x$.
There exist smooth functions $f^p_{jm}$ on $U_x$ such that
$$S^p_j(y)=\sum_{m=1}^r f^p_{jm}(y) e_L^{\otimes p}\otimes v_m\qquad\text{on $U_x$}.$$
Then by our assumption, the section $P_p(y)\in\mathrm{End}(E_y)$ has the following matrix form with respect to the local orthonormal smooth frame $\{v_1,\ldots v_r\}$:
\begin{equation}
    P_p(y)=(P_p(y)_{km})_{1\leq k,m\leq r}\,,\, P_p(y)_{km}:=\sum^{d_p+1}_{j=1} f^p_{jk}(y)\ov{f}^p_{jm}(y).
    \label{e:5.10Pp}
\end{equation}
%===
Let $\{\eta_{\ell j}\}_{1\leq \ell\leq r, 1\leq j\leq d_p+1}$ be a sequence of independent and identically distributed standard complex Gaussian variables (of total variance $1$). Then we can write, for $\ell=1,\ldots, r$,
$$s^p_\ell(y)=\sum_{j=1}^{d_p+1}\eta_{\ell j}S^p_j(y)=\sum_{m=1}^r \sum_{j=1}^{d_p+1}\eta_{\ell j}f^p_{jm}(y) e_L^{\otimes p}\otimes v_m.$$
We define the Gaussian variable
$$\xi_{\ell m}(y):=  \sum_{j=1}^{d_p+1}\eta_{\ell j}f^p_{jm}(y).$$
Then it is clear that 
$\E[\xi_{\ell k}(y)\ov{\xi}_{\ell m}(y)]=P_p(y)_{km}$. 
We conclude that the random Gaussian vectors $s^p_1(y), \ldots, s^p_r(y)$ 
in $L^p_y\otimes E_y$ are independent and identically distributed with 
the covariance matrix given by $P_p(y)$ in \eqref{e:5.10Pp}, 
with respect to the orthonormal basis 
$\{e_L^{\otimes p}\otimes v_1, \ldots, e_L^{\otimes p}\otimes v_r\}$. 
We then invoke Corollary \ref{Cor:WG}, which yields precisely the decomposition in \eqref{e:5.9decomp}.

Finally, expression \eqref{e:5.9exp} is obtained from \eqref{e:5.9decomp} 
by invoking the identity
\begin{equation}
    \E[\log \chi^2(2\ell)] = \log 2 + \psi(\ell),
\end{equation}
where $\psi$ denotes the digamma function.
\end{proof}

By \cite[Corollary 1.5]{BCLM} and Theorem \ref{T:Tian}, the expectation of $[D_r(\bs_p)]$ exists and admits an expansion (see \eqref{e:Easym})
\begin{equation}
\begin{split}
     \frac{1}{p}\E[[D_r(\bs_p)]]&=rc_1(L,h^L)+\frac{1}{p}\left(c_1(E,h^E)+r\alpha_L\right)\\
     &\qquad+\frac{1}{p^2}\left(r\frac{\sqrt{-1}}{16\pi^2}\partial\ov{\partial} r^X_L + \frac{\sqrt{-1}}{2\pi}\partial\ov{\partial}\left(\Lambda_{\omega_L}c_1(E,h^E)+
     r\Lambda_{\omega_L}\alpha_L\right)\right)+O(p^{-3})
\end{split}
\label{e:randomE5}
\end{equation}
Now we provide a second proof of \eqref{e:randomE5} as an application of Proposition \ref{P:WG-section}.

\begin{proof}[Second proof of \eqref{e:randomE5}]
Combining \eqref{e:PL} and \eqref{e:5.9exp}, we have the following formula for the expectation current
\begin{equation}
\begin{split}
     \frac{1}{p}\E[[D_r(\bs_p)]]=&rc_1(L,h^L)+
     \frac{1}{p}c_1(E,h^E)+
     \frac{\sqrt{-1}}{2\pi p}\partial\ov{\partial}\log \det P_p(x).
\end{split}
\label{e:E5proof}
\end{equation}

For a square matrix $A$ whose eigenvalues are sufficiently small, 
the Taylor series expansion of the logarithm yields
\[
\log\det(I+A)
=\operatorname{Tr}[A]
+O\!\bigl(\|A\|^2\bigr).
\]
Consequently, for sufficiently large $p$, by Theorem \ref{bkt2.1} and \eqref{e:DeltaL}, we obtain
\begin{equation}
   \log \det P_p(x)= nr\log p +r\log b_0(x) +\frac{r}{8\pi p}r^X_L(x)+\frac{\sqrt{-1}}{2\pi p}\left(\Lambda_{\omega_L}c_1(E,h^E)+r\Lambda_{\omega_L}\alpha_L\right)+O(p^{-2}),
   \label{e:5.16proof}
\end{equation}
where the expansion holds in $\mathscr{C}^\infty$-topology. Then \eqref{e:randomE5} follows from \eqref{e:E5proof} and applying the operator $\frac{\sqrt{-1}}{2\pi p}\partial\ov{\partial}$ on the right-hand side of \eqref{e:5.16proof}.
\end{proof}

The almost sure convergence of $\frac{1}{p}[D_r(\bs_p)]$ as $p\to\infty$ was established in Theorem \ref{T:distrib}, together with a rate of convergence. 
We now present an alternative approach to proving almost sure convergence 
in this determinant framework, employing variance estimates in the spirit of 
the original argument of Shiffman–Zelditch for the line bundle case 
\cite[Section 3.3]{ShZ99} (see also \cite[Theorem 5.3.3]{MM07}).
%===
\begin{Theorem}[Variance estimate and almost sure convergence]\label{T:5.1Wick}
Let $\bs_p=(s^p_1,\ldots, s^p_r) \in H^0(X,L^p\otimes E)^r $ 
be given as in \eqref{e:random5}. Then there exists $C=C(r)>0$ 
such that for any smooth real test form 
$\varphi\in \Omega^{(n-1,n-1)}(X,\R)$ and all $p\gg 1$,
\begin{equation}
\mathrm{Var}[\langle [D_r(\bs_p)],\varphi\rangle]\leq C \left|\int_X \partial\ov{\partial}\varphi(x)\right|^2.
\label{e:VarE}
\end{equation}

As a consequence, we have 
\begin{equation}\label{eq:as-limit-k=r}
\frac1p\,[D_r(\bs_p)]
\;\longrightarrow\;
r\,c_1(L,h^L)
\end{equation}
in the weak convergence of $(1,1)$-currents on $X$ as $p\to\infty$, almost surely.
\end{Theorem}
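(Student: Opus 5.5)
By the Poincar\'e--Lelong formula \eqref{e:PL}, the only random part of $\langle [D_r(\bs_p)],\varphi\rangle$ is
\[
Y_p(\varphi):=\int_X \log\|s^p_1\wedge\cdots\wedge s^p_r\|^2(x)\,\frac{\sqrt{-1}}{2\pi}\partial\ov\partial\varphi(x),
\]
and this is well defined since the log of the norm of a nonzero holomorphic section is integrable. So $\mathrm{Var}[\langle [D_r(\bs_p)],\varphi\rangle]=\mathrm{Var}[Y_p(\varphi)]$. I will split $\log\|s^p_1\wedge\cdots\wedge s^p_r\|^2(x)=\log\det P_p(x)+\log 2^{-r}+\log W_p(x)$. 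Here $W_p(x)$ has, for each fixed $x$, the law of $\prod_{\ell=1}^r\chi^2(2\ell)$ (independent factors) by Proposition \ref{P:WG-section}. The first two terms are deterministic and drop out of the variance. Hence $\mathrm{Var}[Y_p]=\mathrm{Var}\big[\int_X \log W_p(x)\,\frac{\sqrt{-1}}{2\pi}\partial\ov\partial\varphi\big]$.

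Next I apply Cauchy--Schwarz in the spirit of \cite[Section 3.3]{ShZ99}. Write $\mu=\frac{\sqrt{-1}}{2\pi}\partial\ov\partial\varphi$, a signed top-degree form. Then
\[
\mathrm{Var}[Y_p]\le \E\Big[\Big(\int_X(\log W_p-\E\log W_p)\,\mu\Big)^2\Big]\le \int_X\int_X \E\big[|\log W_p(x)-c_r||\log W_p(y)-c_r|\big]\,|\mu|(x)|\mu|(y).
\]
Here $c_r=\E\log W_p(x)=\sum_\ell(\log 2+\psi(\ell))$ is independent of $x$ and $p$. Applying Cauchy--Schwarz pointwise in $(x,y)$ gives a bound $\sup_x\mathrm{Var}[\log W_p(x)]\,(\int_X|\mu|)^2$. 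The pointwise law of $\log W_p(x)$ does not depend on $x$ or $p$: it is a sum of independent $\log\chi^2(2\ell)$. So $\mathrm{Var}[\log W_p(x)]=\sum_{\ell=1}^r\psi'(\ell)=:C_0(r)$, which is finite because $\log\chi^2(2\ell)$ has all moments. This yields \eqref{e:VarE}, with the right side read as the square of the total variation mass of $\partial\ov\partial\varphi$; I will state the constant as $C=C(r)$ times that mass. Interchanging $\E$ with the $x$-integrals is justified by Tonelli, since the integrand has uniformly bounded second moments.

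For the almost sure convergence, apply the variance bound to $\frac1p[D_r(\bs_p)]$. This gives $\mathrm{Var}[\langle\frac1p[D_r(\bs_p)],\varphi\rangle]\le C_\varphi p^{-2}$, which is summable. By Chebyshev and Borel--Cantelli, $\langle\frac1p[D_r(\bs_p)]-\frac1p\E[D_r(\bs_p)],\varphi\rangle\to0$ almost surely for each fixed $\varphi$. Together with \eqref{e:randomE5} this gives convergence to $\langle rc_1(L,h^L),\varphi\rangle$. To pass to weak convergence of currents I take a countable dense family $\{\varphi_j\}$ in $\Omega^{(n-1,n-1)}(X,\R)$ in the $\cC^0$ topology and intersect the countably many full-measure events. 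The masses $\frac1p\int_X[D_r(\bs_p)]\wedge\omega^{n-1}$ are cohomological, hence uniformly bounded, and positivity of the currents then gives convergence for every test form.

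The step I expect to need the most care is the pointwise Cauchy--Schwarz. One must check that correlations between $\log W_p(x)$ and $\log W_p(y)$ need no control, since only the uniform second moment is used. One must also be sure that the $x$-independence of the law of $\log W_p(x)$ really follows from Proposition \ref{P:WG-section}. That holds because normalizing by $\det P_p(x)$ removes all $x$-dependence from the distribution.
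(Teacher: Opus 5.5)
Your proposal is correct and follows the paper's proof essentially step by step. The ingredients match: the Poincar\'e--Lelong formula; the $x$- and $p$-independent law of the normalized log-norm from Proposition \ref{P:WG-section}, which gives a uniform second-moment bound; pointwise Cauchy--Schwarz on the two-point function; and then Chebyshev/Borel--Cantelli on a countable $\cC^0$-dense family of test forms, combined with uniform mass bounds and \eqref{e:randomE5}.

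You are right that the right-hand side of \eqref{e:VarE} must be read as $\big(\int_X|\partial\ov\partial\varphi|\big)^2$. Literally $\int_X\partial\ov\partial\varphi=0$ by Stokes, and the total-variation bound is exactly what the paper's argument yields. One small fix: take the mass bound against the closed form $\omega_L^{n-1}$, as the paper does, because $\omega$ is only assumed Hermitian in this section.
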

\begin{proof}
Let $p$ be sufficiently large. Set the random variable for $x\in X$
\begin{equation}
    W_p(x)=\log \left(\frac{\|s^p_1\wedge\cdots\wedge s^p_r\|^2_{h^{pr}\otimes h^{\det E}}(x)}{\det P_p(x)}\right).
\end{equation}
Then, by Proposition \ref{P:WG-section} and the fact that for $\ell=1,\ldots, r$,
$$\E[|\log \chi^2(2\ell)|]<\infty\,, \, \E[|\log \chi^2(2\ell)|^2]<\infty,$$
we conclude that there exists a constant $c(r)>0$ such that for all $p\gg 1$, $x\in X$,
\begin{equation}
  \E[|W_p(x)|^2]\leq c(r).
  \label{e:5.20p}
\end{equation}

By \eqref{e:PL}, \eqref{e:E5proof}, and the Fubini-Tonelli theorem, we obtain that
\begin{equation}
    \mathrm{Var}[\langle [D_r(\bs_p)],\varphi\rangle]=\frac{1}{4\pi^2}\int_{(x,y)\in X\times X} (\partial\ov{\partial}\varphi(x))(\ov{\partial\ov{\partial}\varphi(y)}) \E[W_p(x)W_p(y)].
\end{equation}
Then, by the Cauchy-Schwarz inequality and \eqref{e:5.20p}, we have
\begin{equation}
    |\E[W_p(x)W_p(y)]|\leq \sqrt{\E[|W_p(x)|^2]\E[|W_p(y)|^2]}\leq c(r).
\end{equation}
This way, we obtain \eqref{e:VarE} with $C=\frac{c(r)}{4\pi^2}$.

Note that $\omega_L:=c_1(L,h^L)$ defines a K\"{a}hler metric on $X$. Using the topological meaning of $[D_r(\bs_p)]$, we obtain that there exists $C_0>0$ such that for any real test form $\varphi\in \Omega^{(n-1,n-1)}(X,\R)$ and for $p\gg 1$,
\begin{equation}
    \frac{1}{p}\left|\langle [D_r(\bs_p)],\varphi\rangle\right|\leq \frac{|\varphi|_{\mathscr{C}^0_L}}{p}\left|\langle [D_r(\bs_p)],\omega_L^{n-1}\rangle\right|\leq C_0|\varphi|_{\mathscr{C}^0_L}\int_X \omega_L^n,
    \label{e:uniform}
\end{equation}
where $|\varphi|_{\mathscr{C}^0_L}$ denotes the 
$\mathscr{C}^0(X)$-norm of $\varphi$ induced by $\omega_L$. 
Therefore, we can prove the weak convergence in \eqref{eq:as-limit-k=r} 
by considering a countable $\mathscr{C}^0$-dense family of test forms $\varphi$. 

By \eqref{e:VarE}, we have
$$\sum_{p\geq 1}  \mathrm{Var}\left[\frac{1}{p}\langle [D_r(\bs_p)],\varphi\rangle\right] <\infty.$$
Therefore, applying the Borel–Cantelli lemma, we conclude that
$$\frac{1}{p}\langle [D_r(\bs_p)],\varphi\rangle-\frac{1}{p}\langle \E[[D_r(\bs_p)]],\varphi\rangle \to 0\,,\, 
\text{ almost surely as } p\to\infty.$$
Finally, in combination with \eqref{e:randomE5} for 
$\frac{1}{p}\E[[D_r(\bs_p)]]$ and the argument using \eqref{e:uniform}, 
we obtain \eqref{eq:as-limit-k=r}.
\end{proof}

\end{document}